%% file: main.tex
\documentclass[11pt,a4paper]{article}

\usepackage{amsmath,amsthm}
\usepackage{lmodern}
\usepackage[T1]{fontenc}
\usepackage[numbers]{natbib}
\usepackage{graphicx}
\usepackage{url}
\usepackage{amssymb}
\usepackage{mathtools}
\usepackage{bbold}
\usepackage[all]{xy}
\usepackage{xcolor}
\usepackage{mdframed}
\usepackage{makeidx} 
\usepackage{float}
\usepackage{hyphenat}
\usepackage{pdfpages}
\usepackage{array}
\usepackage{datetime}
\usepackage{enumitem}
\usepackage{hyperref}

\allowdisplaybreaks

\input{cmd}

\begin{document}

\title{On The Convergence of the Discretized Linear Static State-Based Peridynamic Equations}

\author{Lukas Pflug\thanks{FAU Competence Center Scientific Computing, Friedrich-Alexander University of Erlangen-Nuremberg, Martensstr. 5a, 91058 Erlangen, Germany}\and Michael Stingl\thanks{Department of Continuous Optimization, Friedrich-Alexander University of Erlangen-Nuremberg, Cauerstraße 11, 91058 Erlangen, Germany}\and Max Zetzmann\thanks{Department of Continuous Optimization, Friedrich-Alexander University of Erlangen-Nuremberg, Cauerstraße 11, 91058 Erlangen, Germany, Corresponding author. E-mail: max.zetzmann@fau.de}}
\date{Friday 20th February, 2026}

\maketitle

\begin{abstract}
In this paper, the convergence of the solutions for a discretized linear state-based static
peridynamic system to the corresponding continuous solution is analytically proven. To
obtain an implementable model, we further apply one-point-quadrature to the terms in
the discrete equations. The resulting system coincides with the commonly used meshfree
discretization using a regular lattice, including the possibility of using partial area algorithms to improve the numerical behavior.
We again prove convergence, this time for fixed choices of a weighting function commonly used in literature and stronger assumptions on the input data. We note however, that these assumptions are not significantly restrictive for practical purposes. In particular, 
they still allow discontinuities in the material parameters and external body forces. 
\end{abstract}

\keywords{peridynamics, state-based, meshfree,  convergence}

\input{cnt/introduction}
\input{cnt/TheLinSBMod}
\input{cnt/ConvAna}
\input{cnt/ConvNum}

\appendix

\section*{Acknowledgment}
\input{note}

\bibliography{biblio}
\bibliographystyle{plainnat}

\section{Appendix}
\input{cnt/appendix}

\end{document}

%% file: cmd.tex
\newcounter{remarkcnt}
\counterwithin{remarkcnt}{section}
\providecommand{\keywords}[1]{{\small\textbf{\textit{Keywords---}} #1}} 

\newcommand{\num}[0]{ {\scriptscriptstyle \mathrm{num} } }
\newcommand{\fnum}[0]{\operatorname{num}}
\newcommand{\dd}{\ \mathbf{d}}

\newcommand{\qedwhite}{\hfill \ensuremath{\Box}}
\renewcommand{\emptyset}[0]{\varnothing}

\newcommand{\diam}[0]{\operatorname{diam}}
\newcommand{\Tr}[0]{\operatorname{Tr}}
\newcommand{\vol}[0]{\operatorname{vol}}
\newcommand{\sgn}[0]{\operatorname{sgn}}
\newcommand{\id}[0]{\mathrm{id}}

\renewcommand{\b}[1]{\mathbf{#1}} 
\newcommand{\ca}[1]{\mathcal{#1}}

\newcommand{\supp}[0]{\text{ supp}}

\newcommand{\idL}[0]{{\mathrm{id}_{L^2}}}
\newcommand{\supscr}[1]{{\scriptscriptstyle (#1)}}

\makeatletter
\renewcommand\paragraph{\@startsection{paragraph}{4}{\z@}%
{-3.25ex\@plus -1ex \@minus -.2ex} {1.5ex \@plus .2ex} {\normalfont\normalsize\bfseries}}
\makeatother

\newcommand{\callInteg}[3]{V_{#1}V_{#2}^\supscr{#1}V_{#3}^\supscr{#1}\tau^\num(x_{#1})\rho(x_{#2}-x_{#1})\rho(x_{#3}-x_{#1})\left(x_{#2}-x_{#1}\right)\left(x_{#3}-x_{#1}\right)^\top}

\newcommand{\frclauseSB}[3]{\rho(x_{#1}-x_{#2})\rho(x_{#3}-x_{#2})\left(x_{#1}-x_{#2}\right)\left(x_{#3}-x_{#2}\right)^\top}

\newcommand{\chii}[0]{\chi_{\Omega_i}}
\newcommand{\chij}[0]{\chi_{\Omega_j}}
\newcommand{\Pk}[0]{\ca{P}_\kappa}
\newcommand{\Kk}[0]{\ca{K}_\kappa}
\newcommand{\ProjP}[0]{\mathbb{P}_\kappa}
\newcommand{\step}[0]{\mathrm{step}}
\newcommand{\LFS}[0]{\mathrm{LFS}}

\newcommand{\theoskip}[0]{\medskip}
\newcommand{\theopreskip}[0]{\medskip}

\newenvironment{theorem}[1][]{\refstepcounter{remarkcnt}\par\theopreskip\noindent\textbf{Theorem \theremarkcnt}
\if\relax\detokenize{#1}\relax
\else
(\textit{#1})
\fi}{\par\theoskip}

\newenvironment{lemma}[1][]{\refstepcounter{remarkcnt}\par\theopreskip\noindent\textbf{Lemma \theremarkcnt}
\if\relax\detokenize{#1}\relax
\else
(\textit{#1})
\fi}{\par\theoskip}

\newenvironment{corollary}[1][]{\refstepcounter{remarkcnt}\par\theopreskip\noindent\textbf{Corollary \theremarkcnt}
\if\relax\detokenize{#1}\relax
\else
(\textit{#1})
\fi}{\par\theoskip}

\newenvironment{proposition}[1][]{\refstepcounter{remarkcnt}\par\theopreskip\noindent\textbf{Proposition \theremarkcnt}
\if\relax\detokenize{#1}\relax
\else
(\textit{#1})
\fi}{\par\theoskip}

\newenvironment{assumptions}[1][]{\refstepcounter{remarkcnt}\par\theopreskip\noindent\textbf{Assumptions \theremarkcnt}
\if\relax\detokenize{#1}\relax
\else
(\textit{#1})
\fi}{\par\theoskip}

\newenvironment{remark}[1][]{\refstepcounter{remarkcnt}\par\theopreskip\noindent\textbf{Remark \theremarkcnt}
\if\relax\detokenize{#1}\relax
\else
(\textit{#1})
\fi}{\par\theoskip}

\newenvironment{definition}[1][]{\refstepcounter{remarkcnt}\par\theopreskip\noindent\textbf{Definition \theremarkcnt}
\if\relax\detokenize{#1}\relax
\else
(\textit{#1})
\fi}{\par\theoskip}

%% file: cnt/introduction.tex
\section{Introduction}
Peridynamics is a relatively new approach to modeling a solid body in continuum mechanics first introduced by Silling in \cite{SillRef}. It uses a non-local integral equation instead of partial differential equations that, in contrast to the classical theory, does not rely on any spatial derivatives of the displacement $\b{u}$, making peridynamics naturally relevant for fracture-modeling (See e.g. \cite{SillMeshFree},  \cite{WP_FRAC}, \cite{Ha2010}). It has also inspired similar models for fracture analysis (See \cite{lipton2015cohesive}, \cite{lipton2020nonlocal}, \cite{Lipton2019DynamicBF}, as well as the model introduced in \cite{JAVILI2019125} used for fracture in \cite{CPDFRAC}). Further applications include Topology Optimization with pre-imposed cracks (see \cite{Kefal2019}). 

The original peridynamic model as introduced in \cite{SillRef} is the so-called \textit{bond-based} model, which, while being simple to implement and numerically efficient, has the disadvantage of only being able to replicate a fixed Poisson's ratio. This shortcoming motivated the formulation of a generalized, so-called \textit{state-based} model. This generalized formulation allows for the modeling of materials with arbitrary bulk and shear moduli. A particularly important instance of such a state-based model is stated in \cite{lintheoofperistate},  5.3 Example 3. One should note that the bond-based model is a special case of the state-based model. Therefore, all results in this paper are also applicable without any further effort to bond-based models. If desired, a more detailed and complete overview of peridynamics can be found in \cite{SILLINGBOOK} and \cite{HandbookPDModelling}.  \par\smallskip

This paper focuses on the linearization of this state-based model, and aims to prove the convergence of the solutions for the discretized equations to the continuous solution. 
Numerical analysis on the convergence and convergence behavior in this setting has been carried out before in \cite{SELESON20162432}, \cite{ImprOnePointQuadrData} and \cite{BobarFlorConv1DPD}. In the context of the latter, we are dealing with the so-called $m$-convergence in this paper, meaning the horizon of the model will remain the same as the discretization becomes finer.

This paper aims to prove the convergence rigorously. However, proving a rate for the convergence is not part of the scope of this paper. We will do so in two different ways: A general setting allowing more general input data and in turn requiring the exact evaluation of multiple integrals in the constituents of the equation, and in a numerical setting using one-point quadrature on these integrals, but in turn requiring the specific choice of the weighting function $\omega$ given by $\zeta \mapsto \rho(\zeta)\chi_{[0,\delta)}(\|\zeta\|)$, $\rho(\zeta) = \tfrac{1}{\|\zeta\|}$. A remark generalizing the choice for the weighting function to include more regular functions $\rho$ will also be stated at the end. This choice for the weighting function is frequently found in literature, e.g. in Silling's works on the discretization of peridynamics \cite{SillMeshFree} and the PMB (prototype microelastic brittle) material defined in it. In contrast to this PMB model however, we do not yet consider fracture in this paper. 
See also \cite{oninflu}, which studies the effect of different choices of the weighting function on the non-local constitutive behavior. The discretization scheme we use for the numerical model coincides with the spatial discretization described in \cite{SillMeshFree} on a regular lattice, extended to state-based models and is frequently encountered in numerical simulations. \par\smallskip

One should note that similar rigorous convergence results for the dynamical problem given by the peridynamics-inspired non-linear model described in  \cite{lipton2015cohesive},  \cite{lipton2020nonlocal} and \cite{Lipton2019DynamicBF} have already been shown in \cite{FinElemApproxNonLoc}, \cite{liphabbcohesivenumerical} and \cite{JhaNumericalCO}. Specifically, in \cite{FinElemApproxNonLoc}, an approximation using linear continuous interpolation on triangular or tetrahedral elements was used in the context of a finite element scheme, and it was assumed that the right hand side and the initial values are in  $H^2(\Omega)^d\cap H^1_0(\Omega)^d$. Furthermore, the material parameters were assumed to be constant. 

In \cite{liphabbcohesivenumerical} and \cite{JhaNumericalCO}, a spatially piecewise constant approximation was chosen and the regularity of the input data and right-hand side was assumed to be Hölder continuous, while the material parameters remained constant. We note that the model used in \cite{liphabbcohesivenumerical}, \cite{JhaNumericalCO} and \cite{FinElemApproxNonLoc} differs from and does not include the model considered in this paper.

Other results for the linear bond-based model include \cite{DuConv}, which proved the convergence using Fourier analysis. This approach, however, imposes significant restrictions on the geometry of $\Omega$ - a finite interval was considered in one dimension, and a box in two dimensions. On those domains, convergence was proven for a generic inner approximation in the context of a Galerkin–Ritz scheme.

Most notably, none of the above papers rigorously proved the convergence of the discrete solutions when one-point quadrature was additionally applied to the discrete equations. This is relevant since most practical computations make use of quadrature to simplify the integrals involved in the underlying non-local peridynamic model. Instead, if quadrature was addressed, the convergence of the discretized solutions using one-point quadrature was only numerically compared with the bounds and rate of the solutions without one-point quadrature (See \cite{JhaNumericalCO}, Section 6).\par\smallskip
This paper aims to fill this gap between the commonly used mesh-free discretization with one-point quadrature and the continuous formulation for the state-based linear model. To the best of our knowledge, there has not yet been a rigorous proof for the convergence of the mesh-free discretization of the static linear state-based model - whether or not one-point quadrature is used - that further does not assume continuity of the material parameters or right-hand side, nor impose significant restrictions on the domain $\Omega$.\newline

The structure of this paper is as follows: after introducing the model used in this paper in Section 1, showing the convergence of the discrete solutions without one-point quadrature will be the subject of Section 2. Therein, the main result, Theorem \ref{theo:TheDiscrSolsConv}, will be proven under the Assumptions \ref{ass:GenAssump}, showing the convergence of the discrete solutions to the continuous solution in the $L^2(\Omega)^d$ norm. Notably, no continuity assumptions are needed for the material parameters or external body forces. The main step here is showing the density of the inner approximation, proven in Proposition \ref{prop:UnionOfVIsDense}, which builds on Proposition \ref{prop:ClosedZeroSetApprox}. The rest follows from Galerkin Theory.

Section 3 will introduce the numerical model in (\ref{eqn:defnumconstparts}) and (\ref{eqn:bstiffnumconst}), which results from the discrete model in Section 2 by applying one-point quadrature. To improve the practical convergence, we also include partial area weights in the numerical formulation as suggested in \cite{Seleson2018}, which can be freely chosen up to mild consistency assumptions. Then, we prove the convergence of the resulting numerical solutions in Proposition \ref{prop:ConvLHSNum} under the Assumptions \ref{ass:NumDiscr}. For this, we need stronger assumptions on both the material parameters and the right-hand side. We assume boundedness and almost everywhere local $L$-Lipschitz continuity for some $L>0$. In particular, discontinuities on sets of measure zero are still allowed. To prove Proposition \ref{prop:ConvLHSNum}, a convergence result for the linear operators associated to the corresponding discrete and continuous problems is proven in Corollary \ref{corol:NumConvCoroll}, and a uniform coercitivity result is obtained in Corollary \ref{corol:CoercResult}. These results are all based on  Lemma \ref{lemm:LFS} and Proposition \ref{prop:ClosedZeroSetApprox} leading to Proposition \ref{prop:LJfuncAreNumerical}, which allow us to deal with the discontinuities of the input data in Proposition \ref{prop:ConvLemma}, finally yielding Corollary \ref{corol:NumConvCoroll}. Finally, Remark \ref{rm:ClosingRemarkKernel} will show that the convergence proof can be easily adapted to other weighting functions $\omega$, in particular those of the form $\omega(\zeta) = \chi_{[0,\delta)}(\|\zeta\|)\rho(\zeta)$ for a $\rho \in C^0(\mathbb{R}^d)$, as well as $\omega(\zeta) =  \chi_{[0,\delta)}(\|\zeta\|) \tfrac{p_n(\|\zeta\|)}{\|\zeta\|^\alpha}$ for a suitable polynomial $p_n \geq 0$ and $\alpha \in \{0,1\}$ as described in  \cite{Seleson2018}, \cite{ImprOnePointQuadrData}, \cite{SELESON20162432}.

%% file: cnt/TheLinSBMod.tex
\subsection{The Linear State-Based Model}
Let $d\in\{2,3\}$ denote the space dimension. Following \cite{Mengesha2014}, the variational linear static state-based problem is given using the bilinear form $B$ as defined in (\ref{eqn:defB}), by 
\begin{equation}
\label{eqn:sbprobcont}
   \text{Find }u\in \ca{V} \text{  such that } \forall v\in \ca{V}: B(u,v) = \int_{\Omega} b(x) \cdot v(x) \dd x,
\end{equation}
for an open and bounded reference configuration $\Omega\subset \mathbb{R}^d$ and the external body forces $b:\Omega \rightarrow \mathbb{R}^d$. $\ca{V}$ is a subspace of $L^2(\Omega)^d$ disjoint with \begin{equation*}
    \Pi := \{f \in L^2(\Omega)^d : \exists c\in \mathbb{R}^d,\  \mathbb{R}^{d\times d} \ni A  = -A^\top: f(x) = Ax +c, \forall x\in \Omega\}.
\end{equation*}
The subspace $\ca{V}\subset L^2(\Omega)^d$ realizes the homogeneous Dirichlet conditions imposed on the problem. For example, for a given open subset $\Theta\subset\Omega$ on which the solution should be $0$, $\ca{V}$ takes the form
\begin{equation*}
    \ca{V} := \{u \in L^2(\Omega)^d : u_{|\Theta} \equiv 0\}.
\end{equation*}
Choices for $\ca{V}$ of this form are by construction disjoint with $\Pi$. This method of imposing boundary conditions has known drawbacks. Specifically, it causes unwanted artifacts at the interface between $\Theta$ and the rest of the body. Suggestions for improved approaches can be found in \cite{zhaoGenFNM}. We will stick to this method nonetheless because of its simplicity as well as its compatibility with the theory in \cite{Mengesha2014}.

To formulate the bilinear form $B$ in (\ref{eqn:sbprobcont}), we require the following non-local operators stemming from \cite{nonlocveccalclong}. 
\begin{equation*}
\begin{aligned}
    [\ca{D}_{\overline{\omega}}^*u](x) &:= -\int_{\Omega} (u(x') - u(x))\otimes\left(\frac{x'-x}{\|x'-x\|^2}\right)  \overline{\omega}(x,x') \dd x', \\
    [\ca{D}^*u](x,x') &:= -(u(x')-u(x)) \otimes \left(\frac{x'-x}{\|x'-x\|^2}\right).
\end{aligned}
\end{equation*}
These can be understood as non-local divergence and gradient operators justified by convergence results stated in \cite{nonlocveccalclong} Section 5, as well as algebraic identities that resemble those from the local operators, see  \cite{nonlocveccalclong}, Proposition 3.2. and Theorem 4.1. We further define $\Tr: \mathbb{R}^{d\times d} \rightarrow \mathbb{R}$ to be the usual trace function. 

Choosing a weighting function $\omega\in L^1(\mathbb{R}^d)$ and material parameters $\alpha, k:  \Omega \rightarrow \mathbb{R}_{>0}$, the bilinear form in (\ref{eqn:sbprobcont}) is given for $u,v\in\ca{V}$ by
\begin{equation}
\label{eqn:defB}
    \begin{aligned}
        &B(u, v)\\
        &:= \int_{\Omega} \left(k(x) - \frac{\alpha(x)m(x)}{d^2}\right) \text{Tr}([\mathcal{D}^*_{\overline{\omega}}u](x))\text{Tr}([\mathcal{D}^*_{\overline{\omega}}v](x))
        \\
        &\quad + \alpha(x) \int_{\Omega} \omega(x'-x)\|x'-x\|^2(\text{Tr}([\mathcal{D}^*u](x,x')))(\text{Tr}([\mathcal{D}^*v](x, x') ))\dd x' \dd x,
    \end{aligned}
\end{equation}
where 
\begin{equation*}
    \overline{\omega}(x,x') := \frac{d}{m(x)} \omega(x'-x)\|x'-x\|^2,\,\,\, m(x):= \int_{\Omega} \omega(x'-x)\|x'-x\|^2\dd x'.
\end{equation*}

\noindent For the well-posedness of the continuous problem and the first convergence result, we further assume the following. \newline

\begin{assumptions} (\textit{Assumptions for the Continuous Model})\label{ass:GenAssump}
\begin{enumerate}[leftmargin=2.52em]
    \item[A1] \textit{Integrability, Non-Negativity and Symmetry of the Weighting Function}: \newline 
    $\omega(\cdot) \in L^1(\mathbb{R}^d)$, with $\omega \geqq 0$ and $\omega(\zeta) = \omega(-\zeta)$ for all $\zeta \in \mathbb{R}^d$.
    \item[A2] \textit{Bounds for the Support of the Weighting Functions}: \newline
    There exist $0 < \lambda,\delta <\infty$ such that $B_{\lambda}(0) \subset \supp (\omega) \subset B_{\delta}(0)$.
    \item[A3] \textit{Boundedness of the Material Parameters}: \newline 
    $\alpha, k:\Omega \rightarrow \mathbb{R}$ measurable and bounded, i.e., there are $\alpha_0, \alpha_1, k_0, k_1 >0$ such that $\alpha_0 \leq \alpha(x) \leq \alpha_1 <\infty$ and $k_0 \leq k(x) \leq k_1 <\infty$ for all $x\in\Omega$.
    \item[A4] \textit{Regularity of the External Body Forces}:\newline
    $b\in L^2(\Omega)^d$.
    \item[A5] \textit{Regularity of the Domain}:\newline $\Omega$ is a bounded and connected Lipschitz domain. In particular, it satisfies an interior cone condition for some angle $\theta>0$ and radius $h>0$.
    \item[A6] \textit{Regularity of the Boundary Conditions}:\newline
    $\Theta\subset\Omega$ is open, non-empty and $\vol(\partial \Theta) = 0$.
\end{enumerate}
\end{assumptions}
As shown in \cite{Mengesha2014}, the assumptions above are sufficient to obtain the well-posedness of the problem stated in (\ref{eqn:sbprobcont}). Note that \cite{Mengesha2014} proves the well-posedness under even more general assumptions on $\omega$, requiring only $\omega(\cdot) \|\cdot\|^2 \in L^1(\mathbb{R}^d)$.

%% file: cnt/ConvAna.tex
\section{Convergence of an Analytical Model}
To obtain the discrete equations, a piecewise constant approximation is chosen where $\Omega$ is being partitioned as follows.

\begin{definition}[Subdivision of $\Omega$ into $(\Omega_i)_{i\in I^\kappa}$]\label{def:subdiv}
    \begin{enumerate}[leftmargin=*]
    \item Subdivide $\mathbb{R}^d$ into half-open cubes/squares $(B_i)_{i\in Z^\kappa}$ of side-length $\kappa$ and midpoints $x_i$ on a uniform grid $\kappa \mathbb{Z}^d$. The cube containing $0$ is therefore defined to be $[-\tfrac{\kappa}{2}, \tfrac{\kappa}{2}[^d$. 
    \item Define the index family corresponding to all such cubes $B_i$ that intersect $\Omega$ non-trivially ($\text{vol}(B_i\cap \Omega) >0$) as $I^\kappa\subsetneq Z^\kappa$.
    \item For all $i\in I^\kappa$, define $\Omega_i$ as the intersection of $B_i$ with $\Omega$.
\end{enumerate}
\end{definition}

\noindent With this subdivision, we define the following operators to simplify the notation.

\begin{definition}[Piecewise Constant Projection Operators]
    \label{def:ProjDiscr}\newline Let $\chi_B(x)$ denote the characteristic function for a set $B\subset \mathbb{R}^n$. Let $\Kk: (\mathbb{R}^d)^{|I^\kappa|}\rightarrow L^2(\Omega)^d$ and $\Pk:  L^2(\Omega)^d \rightarrow (\mathbb{R}^d)^{|I^\kappa|}$ be defined by
\begin{equation}
    \Kk\vec{{u}}:= \sum_{i\in I^\kappa}\vec{{u}}_i  \chi_{\Omega_i} , \,\, (\Pk{u})_i := \frac{1}{V_i} \int_{\Omega_i} {u}(x) \dd x.
\end{equation}
Further, define $\ProjP:= \Kk \circ \Pk$ as the projection from $L^2(\Omega)^d$ onto a piecewise constant approximation with respect to the $\Omega_i$.
\end{definition}

For these operators we can show the following.

\begin{lemma}[Approximation by Piecewise Constant Functions]\label{lemm:PropProjOps}Both $\Kk$ and $\Pk$ are bounded linear operators fulfilling $\Pk\circ \Kk = \id_{\mathbb{R}^{d|I^\kappa|}}$, and the $\ProjP$ are uniformly bounded in norm by $1$, that is $\|\ProjP\|\leq 1$ for all $\kappa >0$. Finally,
\begin{equation}
    \|(\idL - \ProjP){u}\|_{L^2(\Omega)^d} \xrightarrow[]{\kappa \searrow 0} 0.
\end{equation}
for all ${u}\in L^2(\Omega)^d$.
\end{lemma}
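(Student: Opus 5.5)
The plan is to verify the algebraic and norm properties directly from the definitions, and then to obtain the convergence by a density argument. Throughout, $V_i := \vol(\Omega_i) > 0$, which is positive by the definition of $I^\kappa$. Since $\Omega$ is bounded, $I^\kappa$ is finite.

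\emph{Linearity, boundedness and $\Pk\circ\Kk=\id$.} Linearity of $\Kk$ and $\Pk$ is clear.

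For $\Kk$, the disjointness of the $\Omega_i$ gives
\begin{equation*}
\|\Kk\vec u\|_{L^2(\Omega)^d}^2=\sum_{i\in I^\kappa} V_i\|\vec u_i\|^2 ,
\end{equation*}
so $\Kk$ is bounded on the finite-dimensional space $(\mathbb{R}^d)^{|I^\kappa|}$.

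For $\Pk$, Cauchy--Schwarz gives
\begin{equation*}
\|(\Pk u)_i\|^2\le V_i^{-1}\int_{\Omega_i}\|u\|^2\dd x ,
\end{equation*}
which yields boundedness.

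Since the $\Omega_i$ are pairwise disjoint, we have $(\Pk\Kk\vec u)_i=V_i^{-1}\int_{\Omega_i}\vec u_i\dd x=\vec u_i$.

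\emph{The bound $\|\ProjP\|\le 1$.} Combining the two computations above,
\begin{equation*}
\|\ProjP u\|^2=\sum_i V_i\|(\Pk u)_i\|^2\le\sum_i\int_{\Omega_i}\|u\|^2\dd x=\|u\|^2_{L^2(\Omega)^d}.
\end{equation*}
The last equality uses that the $\Omega_i$ cover $\Omega$ up to a null set. This holds because the cubes $B_i$ partition $\mathbb{R}^d$, and the cubes with $\vol(B_i\cap\Omega)=0$ contribute only a null set, as there are only countably many of them. In addition, $\ProjP$ is idempotent and self-adjoint, so it is an orthogonal projection.

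\emph{Convergence.} The only step needing care is strong convergence, which I would prove with a standard $\varepsilon/3$ argument. First I would take $\varphi\in C_c(\Omega)^d$, which is dense in $L^2(\Omega)^d$. Such a $\varphi$ is uniformly continuous with modulus $\eta$. For $x\in\Omega_i$, the diameter of $\Omega_i$ is at most $\sqrt d\,\kappa$, so
\begin{equation*}
\|\varphi(x)-(\Pk\varphi)_i\|\le V_i^{-1}\int_{\Omega_i}\|\varphi(x)-\varphi(y)\|\dd y\le \eta(\sqrt d\kappa).
\end{equation*}
Hence $\|(\idL-\ProjP)\varphi\|_{L^2}\le \vol(\Omega)^{1/2}\eta(\sqrt d\kappa)\to0$.

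For general $u$, I would choose $\varphi$ with $\|u-\varphi\|\le\varepsilon$. Using $\|\idL-\ProjP\|\le 2$ (in fact $\le 1$, since $\ProjP$ is an orthogonal projection),
\begin{equation*}
\|(\idL-\ProjP)u\|\le 2\varepsilon+\|(\idL-\ProjP)\varphi\| ,
\end{equation*}
and taking $\limsup_{\kappa\searrow0}$ and then $\varepsilon\to0$ gives the claim.

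The main obstacle is modest. It is mainly bookkeeping near $\partial\Omega$: the cells $\Omega_i$ there are irregular, but the argument only uses $\diam\Omega_i\le\sqrt d\kappa$ and $V_i>0$, so no regularity of $\partial\Omega$ is needed.
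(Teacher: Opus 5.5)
Your proposal is correct and follows essentially the same route as the paper. You obtain the norm bounds by Jensen/Cauchy--Schwarz, then use the uniform bound on $\|\mathrm{id} - \mathbb{P}_\kappa\|$ to reduce to a dense class of compactly supported continuous functions, where uniform continuity on cells of diameter at most $\sqrt{d}\kappa$ gives the convergence.
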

\begin{proof}The identity $\Pk \circ \Kk = \text{id}_{\mathbb{R}^{d|I^\kappa|}}$ follows directly from the definition of $\Pk$ and $\Kk$. Their boundedness easily follows from Jensen's Inequality. Similarly, the boundedness of $\ProjP$ follows from 
\begin{equation*}
    \begin{aligned}
        \|\ProjP{u}\|_{L^2(\Omega)^d}^2 &= \sum_{i} \int_{\Omega_i} \left\|\frac{1}{V_i}\int_{\Omega_i} u(x') \dd x'\right\|^2 \dd x\\
        &\stackrel{}{\leq} \sum_{i} V_i \frac{1}{V_i} \int_{\Omega_i} \|u(x)\|^2 \dd x \leq \|{u}\|_{L^2(\Omega)^d} ^2.
    \end{aligned}
\end{equation*}
Using this, we follow \cite{LinFuncAna} section 7.21 in order to prove the last convergence claim. It is sufficient to show 
\begin{equation*}
    \|(\idL - \ProjP){f}\|_{L^2(\Omega)^d} \xrightarrow[]{\kappa \searrow 0} 0 
\end{equation*}
only for $f\in C^0_0(\Omega)^d$, due to $C^0_0(\Omega)^d$ being a dense subset of $L^2(\Omega)^d$.\newline
The claim then follows from Hölder's inequality,
\begin{equation*}
    \begin{aligned}
            \|(\idL - \ProjP){f}\|_{L^2(\Omega)^d}^2 
            &\stackrel{}{\leq} \vol(\Omega) \|(\idL - \ProjP){f}\|_{L^\infty(\Omega; (\mathbb{R}^d, \|\cdot\|_2))}^2\\
            &\leq \vol(\Omega)\left(\sum_{1\leq i\leq d}\sup_{\|x'-x\|\leq\sqrt{d}\kappa} |{f}_i(x')-{f}_i(x)|^2\right) \xrightarrow[f_i \in C^0_0(\Omega)]{\kappa \searrow 0} 0.
            \end{aligned}
\end{equation*}
\end{proof} 

We now construct the familiar algebraic form of the static linear peridynamic equation as given in (\ref{eqn:finaldiscr}). The partition of $\Omega$ into $(\Omega_i)_{i\in I^\kappa}$ is also used to define
\begin{equation*}
    \ca{W}_\kappa := \left\{\sum_{i\in I^\kappa} \chi_{\Omega_i}\vec{u}_i : \vec{u}_i \in \mathbb{R}^d \text{ for all } i \in I^\kappa \right\} \subset L^\infty(\Omega)^d.
\end{equation*}
Now set $\ca{V}_\kappa := \ca{W}_\kappa \cap \ca{V}$ to impose the boundary conditions, and pose the discrete problem as  
\begin{equation}
    \label{eqn:discreteEquation}
    \begin{aligned}
        \text{Find }u_\kappa\in \ca{V}_\kappa \text{  such that } \forall v\in \ca{V}_\kappa: B(u_\kappa,v) = \int_{\Omega} b(x) \cdot v(x) \dd x.
    \end{aligned}
\end{equation}
Since $\ca{W}_\kappa$ is a finite dimensional vector space isomorphic to $({\mathbb{R}^d})^{|I^\kappa|} \cong \mathbb{R}^{|I^\kappa|d}$, one can define the discrete right hand side $\vec{b}\in (\mathbb{R}^d)^{|I^\kappa|}$ and bilinear form $\vec{B} \in (\mathbb{R}^{d\times d})^{|I^\kappa|\times |I^\kappa|}$ for all $i,j \in I^\kappa$ and $1\leq k,l\leq d$, as
\begin{equation}
\label{eqn:defSBdiscrConstit}
    \begin{aligned}
        \vec{b}_i&=(\vec{b}_i^1, \dots, \vec{b}_i^d)^\top,\,\, \vec{b}_i^k:= \int_{\Omega_i} b(x)^\top e_k \dd x \, \, \\
        \vec{B}_{ij} &= (\vec{B}_{ij}^{kl})_{kl},\,\, \vec{B}_{ij}^{kl} := B(\chi_{\Omega_i}e_k, \chi_{\Omega_j}e_l).
    \end{aligned}
\end{equation}
Further defining
\begin{equation*}
    \vec{\ca{V}}_\kappa := \left\{ \vec{b}\in (\mathbb{R}^{d})^{|I^\kappa|} : \Kk\vec{b} = \sum_{i\in I^\kappa}\chi_{\Omega_i} \vec{b}_i \in \ca{V}_\kappa \right\},
\end{equation*}
we now rewrite equation (\ref{eqn:discreteEquation}) into its algebraic form given by 
\begin{equation}
\label{eqn:eqnvector}
   \text{Find }\vec{u}_\kappa\in \vec{\ca{V}}_\kappa \text{  such that } \forall \vec{v}\in \vec{\ca{V}}_\kappa: \vec{u}_\kappa^\top\vec{B} \vec{v} = {\vec{b}}^{\,\top} \vec{v}.
\end{equation}
To prove the well-posedness of (\ref{eqn:eqnvector}), note that $\vec{\ca{V}}_\kappa$ is a linear subspace of $\mathbb{R}^{d|I^\kappa|} \cong \ca{W}_\kappa$ defined by having a zero entry at every index $i$ for which $\vol(\Theta\cap\Omega_i)>0$. Denote the set of all indices $i$ for which $\vol(\Theta\cap\Omega_i)=0$ with $J^\kappa$. Then, define $\tilde{B} \in \mathbb{R}^{d|J^\kappa|\times d|J^\kappa|}$ as the matrix resulting from $\vec{B}$ after removing every row and column with an index $i\in I^\kappa\backslash J^\kappa$. Similarly, define $\tilde{b}$ as the vector $\vec{b}$ with all entries with an index in $I^\kappa \backslash J^\kappa$ removed. Using these new definitions, (\ref{eqn:eqnvector}) reduces down to the following final form,
\begin{equation}
\label{eqn:finaldiscr}
    \text{Find } \tilde{u}_\kappa \in \mathbb{R}^{d|J^\kappa|} \text{ s.t. } \tilde{B}\tilde{u}_\kappa  = \tilde{b}.
\end{equation}
We define the following injection for convenience,
\begin{equation*}
    E: (\mathbb{R}^d)^{|J^\kappa|}\rightarrow (\mathbb{R}^d)^{|I^\kappa|},\,\, \tilde{u} \mapsto \vec{u},\, \vec{u}_i:= \begin{cases} \tilde{u}_i & i \in J^\kappa \\ 0 & \text{else.} \end{cases}
\end{equation*}
Note that $\tilde{u}$ solves (\ref{eqn:finaldiscr}) if and only if $E(\tilde{u})$ solves (\ref{eqn:eqnvector}). 
Therefore, the piecewise constant solutions $u_\kappa$ of (\ref{eqn:discreteEquation}) are in a one to one correspondence with solutions $\tilde{u}_\kappa$ of (\ref{eqn:finaldiscr}), as the Dirichlet boundary conditions ensure that any solution of (\ref{eqn:eqnvector}) must lie in the image of $E$. \newline

Due to the coercitivity of $B$ as shown in \cite{Mengesha2014}, Lemma 3, there exists a $c>0$ such that for all $u\in \ca{V}$,
\begin{equation}\label{eqn:coercBconst}
    c\|u\|_{L^2(\Omega)^d}^2 \leq B(u,u). 
\end{equation}
We define $V_i := \vol(\Omega_i) = \vol(B_i \cap \Omega)$. Using this, one obtains for all $0\not=\tilde{u} \in \mathbb{R}^{d|J^\kappa|}$, that
\begin{equation*}
\begin{aligned}
    \tilde{u}^\top \tilde{B}\tilde{u} = (E\tilde{u})^\top \vec{B} (E\tilde{u}) &= B\left(\sum_{i\in J^\kappa}\chi_{\Omega_i}\tilde{u}_i, \sum_{i\in J^\kappa}\chi_{\Omega_i}\tilde{u}_i\right) \\
    &\geq c \left\|\sum_{i\in J^\kappa}\chi_{\Omega_i}\tilde{u}_i\right\|_{L^2(\Omega)^d}^2\geq \underbrace{ c\left(\min_{i\in J^\kappa} V_i\right)\|\tilde{u}\|^2}_{>0}.
\end{aligned}
\end{equation*}
This implies that $\tilde{B}$ is symmetric positive definite and in particular invertible. This immediately yields the well-posedness of the discrete problems posed in (\ref{eqn:finaldiscr}).\newline

The proof of the convergence of the resulting sequence $(\Kk\tilde{u}_\kappa)_\kappa$ of solutions indexed over $\kappa \searrow 0$ towards the solution of the continuous problem $u$, mainly builds on a density result for the $\ca{V}_\kappa$, as the rest follows from Galerkin theory. For the sake of completeness we will still give the entire proof. 
To this end, we need the following lemma, which is a slight adaptation of the characterization of sets of measure zero. To formulate it, we define a cube to be \textit{half-open} if it is of the form $[a_1,b_1[\times....\times [a_d,b_d[$ for $a,b \in \mathbb{R}^d$.

\begin{lemma}[Characterization of Compact Sets of Measure Zero]\label{lemm:zerosetcubeslemma} Let $A$ be a compact set of measure zero. Then, for each $\epsilon >0$, there exists a finite family $(W_f^\epsilon)_{f\in F}$ of half-open cubes of uniform side length $l$, such that
\begin{equation*}
    A \subset \bigcup_{f\in F} W^\epsilon_f,\,\, \vol\left(\bigcup_{f\in F} W^\epsilon_f\right) < \epsilon.
\end{equation*} \end{lemma}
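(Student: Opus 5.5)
The plan is to cover $A$ by cubes of a single grid $l\mathbb{Z}^d$, with $l$ chosen small, and then bound the total volume of the cubes that actually touch $A$. The natural tool is the regularity of Lebesgue measure: since $\vol(A)=0$, for $\epsilon>0$ there is an open set $U\supset A$ with $\vol(U)<\epsilon$. Because $A$ is compact and $\mathbb{R}^d\setminus U$ is closed and disjoint from $A$, the distance $\eta:=\operatorname{dist}(A,\mathbb{R}^d\setminus U)$ is positive (set $\eta=\infty$ if $U=\mathbb{R}^d$, which cannot happen since $\vol(U)<\infty$).

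Next I will fix a side length $l>0$ with $\sqrt{d}\,l<\eta$. Consider the tiling of $\mathbb{R}^d$ into half-open cubes $[l z_1, l(z_1+1)[\times\dots\times[l z_d, l(z_d+1)[$ for $z\in\mathbb{Z}^d$, and let $F$ be the set of those $z$ whose cube $W_z$ intersects $A$. Then:
\begin{itemize}
\item $F$ is finite, since $A$ is bounded and only finitely many grid cubes meet a bounded set.
\item $A\subset\bigcup_{z\in F}W_z$, since the cubes partition $\mathbb{R}^d$.
\item Each such cube has diameter $\sqrt{d}\,l<\eta$ and contains a point of $A$, so it lies inside $U$.
\end{itemize}
Consequently $\vol\left(\bigcup_{z\in F}W_z\right)\le\vol(U)<\epsilon$, using disjointness of the cubes or simply monotonicity of the measure. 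Setting $W^\epsilon_f:=W_f$ gives the claim.

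The only step needing care is the positivity of $\eta$, which is where compactness enters. Without compactness, for merely closed or bounded null sets, the distance to the complement of $U$ can be zero. Then no single uniform side length works, and one would only get countably many cubes of varying sizes from the standard characterization of null sets. Everything else is routine bookkeeping with the grid.
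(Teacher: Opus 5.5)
Your proof is correct, but it takes a different route from the paper.

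The paper starts from the textbook characterization of null sets by countably many cubes of small total volume. It enlarges these to open cubes with rational side lengths $p_n/q_n$ and uses compactness of $A$ to extract a finite subcover. It then forces a common side length arithmetically, by subdividing every cube into cubes of side $(\mathrm{lcm}_{n} q_n)^{-1}$ and turning these into half-open cubes.

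You instead use outer regularity of Lebesgue measure to get an open $U\supset A$ with $\vol(U)<\epsilon$. Compactness then enters through the positivity of $\eta=\operatorname{dist}(A,\mathbb{R}^d\setminus U)$ rather than through a finite-subcover argument. The uniform side length comes for free: you take all cubes of a single lattice $l\mathbb{Z}^d$ with $\sqrt{d}\,l<\eta$ that meet $A$. This removes the rational-side-length and lcm bookkeeping, as well as the open-to-half-open conversion. It also yields pairwise disjoint, grid-aligned cubes, which the paper's construction does not.

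Your argument also gives more than the lemma needs downstream. Run it directly on the lattice of the $B_i$: if $\overline{B_i}\cap A\neq\emptyset$, every point of $B_i$ lies within $\sqrt{d}\,\kappa$ of a point of $A$. Hence for every $\kappa$ with $\sqrt{d}\,\kappa<\eta$, each such $B_i$ is contained in $U$. So Proposition \ref{prop:ClosedZeroSetApprox} follows immediately, without the intermediate cover by the cubes $W_f$ and without the estimate $(2\kappa+q)^d-q^d\to 0$.
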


\begin{proof} See Lemma \ref{lemm:prfcube}.\end{proof}

This lemma then gives us the following important result.

\begin{proposition}[$(B_i)_{i}$ Covering of Zero Measure Sets]\label{prop:ClosedZeroSetApprox}\\
Let the $(B_i)_{i}$ be defined as in Definition \ref{def:subdiv}. Then, for any compact set $A\subset \mathbb{R}^d$ of measure zero, one has
\begin{equation*}
    0\leq \vol\left(\bigcup_{i:B_i\cap A\not=\emptyset} B_i\right)  \leq \vol\left(\bigcup_{i:\overline{B_i}\cap A\not=\emptyset} B_i\right) \xrightarrow[]{\kappa  \searrow 0} 0.
\end{equation*}
\end{proposition}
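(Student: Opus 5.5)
The first inequality is immediate: $B_i\subset\overline{B_i}$, so the index set on the left is contained in the one on the right, and Lebesgue measure is monotone. For the convergence statement we have two routes. One uses Lemma \ref{lemm:zerosetcubeslemma} to cover $A$ by finitely many cubes of small total volume. The more direct route, which we take, uses only compactness and continuity of measure.

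Set $A_r := \{x\in\mathbb{R}^d : \operatorname{dist}(x,A)\leq r\}$ for $r>0$. Every cube $B_i$ has diameter $\sqrt{d}\kappa$. If $\overline{B_i}\cap A\neq\emptyset$, every point of $B_i$ is therefore within distance $\sqrt{d}\kappa$ of $A$, so $B_i\subset A_{\sqrt{d}\kappa}$. Since the $B_i$ are pairwise disjoint, this gives
\begin{equation*}
\vol\left(\bigcup_{i:\overline{B_i}\cap A\not=\emptyset} B_i\right) \leq \vol\left(A_{\sqrt{d}\kappa}\right).
\end{equation*}
It therefore suffices to show that $\vol(A_r)\to 0$ as $r\searrow 0$.

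The sets $A_r$ are closed, nested, and decreasing in $r$. Because $A$ is compact and hence bounded, $A_1$ is bounded and so has finite measure. Because $A$ is closed, $\bigcap_{r>0}A_r = \{x:\operatorname{dist}(x,A)=0\} = A$. Continuity of Lebesgue measure from above, applied along the sequence $r=1/n$ and using finite measure of $A_1$, yields $\vol(A_{1/n})\to\vol(A)=0$. Monotonicity in $r$ then gives $\vol(A_r)\to 0$ as $r\searrow 0$, which proves the claim.

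There is no serious obstacle. The only points needing care are to use closedness of $A$ to identify the intersection $\bigcap_{r>0}A_r$ with $A$, and boundedness of $A$ to have finite measure for continuity from above. The Lemma \ref{lemm:zerosetcubeslemma} route would instead give a cover of $A$ by cubes $W_f^\epsilon$ with total volume below $\epsilon$. One would then enlarge each $W_f^\epsilon$ by $\sqrt{d}\kappa$ and note that the enlarged total volume tends to that of the original cover as $\kappa\to0$. This works too but is more cumbersome, so we use the neighbourhood argument.
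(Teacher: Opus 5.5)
Your proof is correct, and it takes a genuinely different route from the paper.

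The paper invokes Lemma \ref{lemm:zerosetcubeslemma} to cover $A$ by finitely many half-open cubes $W_f$ of a common side length $q$ with $\vol(\bigcup_f W_f)<\tfrac{\epsilon}{2}$. It then observes that the lattice cubes $B_i$ with $\overline{B_i}\cap W_f\neq\emptyset$ lie in a concentric cube of side $q+2\kappa$. So each cover element gains at most $(q+2\kappa)^d-q^d\to 0$ in volume, and finiteness of the cover makes this uniform in $f$.

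You instead use the single containment $\bigcup_{i:\overline{B_i}\cap A\neq\emptyset}B_i\subset A_{\sqrt{d}\kappa}$ together with continuity from above of Lebesgue measure. This buys several things:
\begin{itemize}
\item It dispenses with Lemma \ref{lemm:zerosetcubeslemma} altogether, including the somewhat fiddly subdivision into cubes of uniform side length in its proof.
\item It shows exactly where closedness of $A$ enters (to get $\bigcap_{r>0}A_r=A$) and where boundedness enters (to get $\vol(A_1)<\infty$).
\item It proves more: $\vol\bigl(\bigcup_{i:\overline{B_i}\cap A\neq\emptyset}B_i\bigr)\leq\vol(A_{\sqrt{d}\kappa})\to\vol(A)$ for every compact $A$.
\item It gives a rate whenever $\vol(A_r)$ decays at a known rate, e.g.\ $O(\kappa)$ if $\vol(A_r)=O(r)$, as for the boundary of a bounded Lipschitz domain.
\end{itemize}

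The paper's argument, by contrast, stays entirely within the cube-covering characterization of null sets and needs neither a distance function nor a limit theorem for measures. However, its bound is expressed through the auxiliary cover rather than through $A$ itself, so it does not directly yield such a rate.

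A minor point: the pairwise disjointness of the $B_i$ is not needed for your inequality. Monotonicity of $\vol$ under the containment suffices.
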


\begin{proof} Let $\epsilon>0$. Invoking Lemma \ref{lemm:zerosetcubeslemma} now gives a finite family of half-open cubes $(W_f)_{f\in F}$ of uniform side length $q$, such that 
\begin{equation}
\label{eqn:constrcubes}
    A \subset \bigcup_{f\in F} W_f,\,\, \vol\left( \bigcup_{f\in F} W_f\right) < \tfrac{1}{2} \epsilon.
\end{equation}
Now, by setting $L_f := \{i : \overline{B_i} \cap W_f \not= \emptyset\}$, one has 
\begin{equation}
    S^\kappa := \bigcup_{i:\overline{B_i}\cap A\not=\emptyset} B_i\subset \bigcup_{f\in F} \bigcup_{i \in L_f} B_i.
\end{equation}
We now claim that, uniformly for every $f \in F$, we have
\begin{equation*}
    0\leq \lim_{\kappa \searrow 0} \vol\left(\bigcup_{i \in L_f} B_i\right) - \vol(W_f) = 0,
\end{equation*}
which follows from 
\begin{equation*}
     0 \leq \vol\left(\bigcup_{i \in L_f} B_i\right) - \vol(W_f) \leq (2\kappa + q)^d  - q^d \xrightarrow[]{\kappa \searrow 0}0.
\end{equation*}
So there exists an $N \in \mathbb{N}$, such that for all $n\geq N$ and all $f\in F$, 
\begin{equation*}
    0 \leq \vol\left(\bigcup_{i \in L_f} B_i\right) - \vol(W_f)\leq (2\kappa_n + q)^d  - q^d \leq \frac{\epsilon}{2|F|},
\end{equation*}
where $L_f$ and $B_i$ depend on $n$ through $\kappa_n$. One then finally has for all $n\geq N$,
\begin{equation*}
    \vol(S^\kappa) \leq \vol\left(\bigcup_{f\in F} \bigcup_{i \in L_f} B_i\right) \leq \vol\left(\bigcup_{f\in F} W_f \right)  + \tfrac{1}{2}\epsilon \stackrel{}{<} \epsilon.
\end{equation*}
The last inequality follows from (\ref{eqn:constrcubes}).
\end{proof}
With this, we obtain the main contribution of this chapter, the density result, as follows.

\begin{proposition}[Density of the Discrete Admissible Sets]
    \label{prop:UnionOfVIsDense} For every sequence $\kappa_n\searrow 0$, the set
    \begin{equation*}
        \tilde{\ca{V}} := \bigcup_{n\in\mathbb{N}} \ca{V}_{\kappa_n}
    \end{equation*}
    is (strongly) dense in $\ca{V}$ with respect to the norm of $L^2(\Omega)^d$. 
\end{proposition}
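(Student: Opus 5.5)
Given $u\in\ca{V}$ and $\epsilon>0$, I will construct an element of some $\ca{V}_{\kappa_n}$ within $\epsilon$ of $u$ in $L^2(\Omega)^d$. The natural candidate is a modified projection: take $\ProjP u$ and set to zero the cell values on every cell $\Omega_i$ with $\vol(\Theta\cap\Omega_i)>0$. Concretely, define
\begin{equation*}
    w_\kappa := \sum_{i\in J^\kappa}\chi_{\Omega_i}(\Pk u)_i,
\end{equation*}
where $J^\kappa$ collects the indices with $\vol(\Theta\cap\Omega_i)=0$. Then $w_\kappa$ vanishes a.e.\ on $\Theta$, so $w_\kappa\in\ca{W}_\kappa\cap\ca{V}=\ca{V}_\kappa$. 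By Lemma \ref{lemm:PropProjOps}, $\|u-\ProjP u\|_{L^2(\Omega)^d}\to0$, so it remains to show $\|\ProjP u - w_\kappa\|_{L^2(\Omega)^d}\to 0$.

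This difference is supported on the union of the cells $\Omega_i$ with $i\in I^\kappa\setminus J^\kappa$. I split these cells into two classes. In the first class, $\Omega_i$ lies essentially inside $\Theta$, i.e.\ $\vol(\Omega_i\setminus\Theta)=0$. There $u=0$ a.e.\ on $\Omega_i$, so $(\Pk u)_i=0$ and the cell contributes nothing. In the second class, $\Omega_i$ meets both $\Theta$ and $\Omega\setminus\Theta$ in positive measure. Since $B_i$ is connected and $\Theta$ is open, such a cube meets $\partial\Theta$, so $\overline{B_i}\cap\partial\Theta\neq\emptyset$. Using the contraction estimate from the proof of Lemma \ref{lemm:PropProjOps} cellwise,
\begin{equation*}
    \|\ProjP u-w_\kappa\|_{L^2(\Omega)^d}^2\le \int_{S^\kappa\cap\Omega}\|u(x)\|^2\dd x,\qquad S^\kappa:=\bigcup_{i:\overline{B_i}\cap\partial\Theta\neq\emptyset}B_i .
\end{equation*}

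The set $\partial\Theta$ is closed and contained in the bounded set $\overline{\Omega}$, so it is compact, and it has measure zero by A6. Proposition \ref{prop:ClosedZeroSetApprox} therefore gives $\vol(S^\kappa)\to0$. Absolute continuity of the integral of $\|u\|^2\in L^1(\Omega)$ then yields $\int_{S^\kappa\cap\Omega}\|u\|^2\to0$, so $w_{\kappa_n}\to u$ in $L^2(\Omega)^d$ along any sequence $\kappa_n\searrow0$. This proves the density.

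The main obstacle is the claim that a mixed cell touches $\partial\Theta$. If $B_i\cap\partial\Theta=\emptyset$, then $B_i=(B_i\cap\Theta)\cup(B_i\setminus\overline{\Theta})$ is a disjoint union of two relatively open sets, so by connectedness $B_i$ lies in one of them. If $B_i\subset\Theta$, the cell is in the first class. If $B_i\cap\Theta=\emptyset$, then $\vol(\Theta\cap\Omega_i)=0$, contradicting $i\notin J^\kappa$. Hence every mixed cell satisfies $B_i\cap\partial\Theta\neq\emptyset$. A little care is needed because "positive measure" is used rather than set inclusion, but the dichotomy above handles this.
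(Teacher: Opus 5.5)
Your proof is correct and follows the paper's argument essentially step for step. You use the same candidate ($\ProjP u$ with every cell meeting $\Theta$ in positive measure set to zero) and the same split of those cells into ones where $u$ already vanishes and mixed ones. The same connectedness argument then places the mixed cells in $S^\kappa$, and you close with the same appeal to Proposition \ref{prop:ClosedZeroSetApprox} and absolute continuity of the integral.

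The differences are cosmetic. You classify the non-mixed cells by $\vol(\Omega_i\setminus\Theta)=0$, whereas the paper uses $\Omega_i\subset\overline{\Theta}$ and needs $\vol(\partial\Theta)=0$ to conclude $u=0$ there. You also write out the connectedness step that the paper only sketches.
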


\begin{proof} Let $v\in\ca{V}$. We already know that due to {Lemma \ref{lemm:PropProjOps}}, there is a sequence of piecewise constant functions $v'_\kappa := \ProjP(v) \in L^2(\Omega)^d$ such that $v'_\kappa \rightarrow v$ as $\kappa \searrow 0$. The index $n$ is again omitted unless relevant. One can modify this sequence to lie in $\tilde{\ca{V}}$ by restricting $v'_\kappa$ to $\cup_{i\in J^\kappa} \Omega_i =: SQ_{J^\kappa}$, thereby defining $v_\kappa := {v'_\kappa}_{|SQ_{J^\kappa}}$. Recall that $J^\kappa\subset I^\kappa$ was the set of indices $i\in I^\kappa$ for which $\vol(\Omega_i \cap \Theta) = 0$.\newline
We now split the group of the $\Omega_i$ for $i\in I^\kappa\backslash J^\kappa$ into those completely inside of $\overline{\Theta}$ and those intersecting both $\Omega\backslash \overline{\Theta}$ and $\Theta$. This leads to the following definitions,
\begin{equation*}
    \begin{aligned}
        \Omega \supset SQ_{K^\kappa}&:= \bigcup_{i\in K^\kappa} \Omega_i,\,\, K^\kappa := \{i\in I^\kappa\backslash J^\kappa : \Omega_i \not\subset \overline{\Theta}\},\\
        \Omega \supset SQ_{R^\kappa}&:= \Omega \backslash (SQ_{K^\kappa}\cup SQ_{J^\kappa}) = N \dot\cup \bigcup_{i\in R^\kappa} \Omega_i,\,\, R^\kappa := \{i\in I^\kappa: \Omega_i \subset \overline{\Theta}\},
    \end{aligned}
\end{equation*}
where $N\subset \Omega$ is some set of measure zero.
Then, one obtains
\begin{equation*}
    \begin{aligned}
        \|v_\kappa - v\|_{L^2(\Omega)^d}^2 &\leq \|\ProjP(v) - v\|_{L^2(SQ_{J^\kappa})^d}^2 + \|v_\kappa - v\|_{L^2(SQ_{K^\kappa})^d}^2 + \|v_\kappa -v\|_{L^2(SQ_{R^\kappa})^d}^2\\
        &\leq \underbrace{\|\ProjP(v) - v\|_{L^2(\Omega)^d}^2}_{\xrightarrow[]{\text{L}\ref{lemm:PropProjOps}}0} + \|v\|_{L^2(SQ_{K^\kappa})^d}^2
    \end{aligned}
\end{equation*}
since $v_\kappa \equiv 0$ on $SQ_{K^\kappa}\cup SQ_{R^\kappa}$ and $v\equiv 0$ on $SQ_{R^\kappa}$ because of $v \in \ca{V}$.

Because of the absolute continuity of the Lebesgue integral, one now only has to show that
\begin{equation*}
    \vol(SQ_{K^\kappa}) \xrightarrow[]{\kappa \searrow 0} 0.
\end{equation*}
This is where we will need assumption A6 in \ref{ass:GenAssump}, which gives us a more concrete description of $SQ_{K^\kappa}$ as 
\begin{equation*}
    SQ_{K^\kappa} \subset \bigcup_{i  : B_i \cap \partial \Theta \not= 0} B_i =: S^\kappa.
\end{equation*}
This follows from the openness of $\Theta$ and the path-connectedness of the half-open cubes $B_i$. 
More specifically, for any $\Omega_i$  with $i \in K^\kappa$, one has
\begin{equation*}
    \begin{aligned}
            \Omega_i \cap \Theta &= B_i \cap \Theta \not= \emptyset \\
            \Omega_i \cap (\Omega\backslash \overline{\Theta}) &= B_i \cap (\Omega\backslash \overline{\Theta}) \not= \emptyset.
    \end{aligned}
\end{equation*}
Now, a typical connectedness argument implies that $B_i \cap \partial\Theta \not= \emptyset$. So that $\Omega_i \subset B_i\subset S^\kappa$. The last part of the proof now uses that $\partial \Theta$ is by assumption a compact set of measure zero to apply Proposition \ref{prop:ClosedZeroSetApprox}.\end{proof}

With the density proven, the rest of the convergence follows from Galerkin theory as follows.

\begin{theorem}[The Convergence of the Discrete Solutions]\label{theo:TheDiscrSolsConv} Let the Assumptions \ref{ass:GenAssump} hold. Then, let $\kappa_n \searrow 0$. Let $u_{\kappa_n}$ be the unique solution to (\ref{eqn:discreteEquation}) for each $\kappa_n$ and $u$ the unique solution to (\ref{eqn:sbprobcont}), then
\begin{equation*}
    \|u- u_{\kappa_n}\|_{L^2(\Omega)^d} \xrightarrow[]{\kappa_n \searrow 0} 0.
\end{equation*}\end{theorem}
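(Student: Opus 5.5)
The plan is the standard Céa-type Galerkin argument in the energy setting. We need boundedness of $B$ on $L^2(\Omega)^d$ together with the coercivity (\ref{eqn:coercBconst}). Boundedness follows from A1--A3. We have $\omega\in L^1$, and by A2 and the Lipschitz domain A5 the function $m$ is bounded below by a positive constant. Hence $|\Tr([\ca{D}^*_{\overline\omega}u](x))|\le \tfrac{d}{m(x)}\int_\Omega \omega(x'-x)\|x'-x\|\,\|u(x')-u(x)\|\dd x'$. The diameter is bounded by $\delta$, so Young's inequality for convolutions with the $L^1$ kernel $\omega\|\cdot\|$ gives an $L^2$ bound. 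The double-integral term is handled in the same way, using $\omega\in L^1$ and $|\Tr[\ca{D}^*u]|\le \|u(x')-u(x)\|/\|x'-x\|$ multiplied by $\|x'-x\|^2\omega$. One must also check that the coefficient $k-\alpha m/d^2$ is bounded; this holds because $m\le \|\omega\|_{L^1}\delta^2$. This yields a constant $C$ with $|B(u,v)|\le C\|u\|_{L^2}\|v\|_{L^2}$. If this step is awkward, I would simply cite \cite{Mengesha2014}, which treats $B$ as a bounded coercive form on $L^2$.

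Next, Galerkin orthogonality. Since $\ca{V}_{\kappa_n}\subset\ca{V}$, subtracting (\ref{eqn:discreteEquation}) from (\ref{eqn:sbprobcont}) tested with $v\in\ca{V}_{\kappa_n}$ gives $B(u-u_{\kappa_n},v)=0$. For any $w\in\ca{V}_{\kappa_n}$, coercivity and boundedness then give
\[
c\|u-u_{\kappa_n}\|^2\le B(u-u_{\kappa_n},u-u_{\kappa_n})=B(u-u_{\kappa_n},u-w)\le C\|u-u_{\kappa_n}\|\,\|u-w\|.
\]
This yields Céa's lemma in the form $\|u-u_{\kappa_n}\|_{L^2}\le \tfrac{C}{c}\inf_{w\in\ca{V}_{\kappa_n}}\|u-w\|_{L^2}$. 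Symmetry of $B$ is not needed for this step.

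Finally, the best-approximation error must tend to zero. Proposition \ref{prop:UnionOfVIsDense} alone only gives density of the union $\bigcup_n\ca{V}_{\kappa_n}$. The spaces are not nested in general, since $\kappa_n$ need not be dyadic, so density of the union does not directly control $\inf_{w\in\ca{V}_{\kappa_n}}$ for each $n$. This is the main subtle point. To resolve it, I would reuse the construction inside the proof of Proposition \ref{prop:UnionOfVIsDense}, which is sequence-wise: for the fixed $u\in\ca{V}$, the functions $v_{\kappa_n}=(\ProjP u)_{|SQ_{J^{\kappa_n}}}$ lie in $\ca{V}_{\kappa_n}$. That proof shows $\|v_{\kappa_n}-u\|_{L^2}^2\le\|\ProjP u-u\|^2+\|u\|^2_{L^2(SQ_{K^{\kappa_n}})}\to0$, using Lemma \ref{lemm:PropProjOps}, Proposition \ref{prop:ClosedZeroSetApprox} applied to the compact null set $\partial\Theta$ (assumption A6), and absolute continuity of the integral. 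Taking $w=v_{\kappa_n}$ in Céa's estimate gives $\|u-u_{\kappa_n}\|_{L^2}\le\tfrac{C}{c}\|u-v_{\kappa_n}\|_{L^2}\to0$, which proves the claim.
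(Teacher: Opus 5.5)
Your proof is correct, but it takes a different route from the paper's. The paper never uses C\'ea's lemma. It writes $B(u,v)=\langle u,-\ca{L}v\rangle_{L^2(\Omega)^d}$ with the bounded self-adjoint operator $\ca{L}$ from \cite{Mengesha2014}, Lemma 5, and gets the uniform bound $\|u_{\kappa}\|_{L^2(\Omega)^d}\le \tfrac1c\|b\|_{L^2(\Omega)^d}$ from coercivity. It then extracts a weakly convergent subsequence by Banach--Alaoglu and identifies the weak limit as $u$. For this it approximates every test function $v\in\ca{V}$ by $v_{\kappa_n}\in\ca{V}_{\kappa_n}$ and passes to the limit in $\langle u_{\kappa_n},-\ca{L}v_{\kappa_n}\rangle_{L^2(\Omega)^d}$. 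Uniqueness gives weak convergence of the whole sequence. Only then does it upgrade to strong convergence via $c\|u-u_\kappa\|^2_{L^2(\Omega)^d}\le B(u-u_\kappa,u-u_\kappa)=\langle u-u_\kappa,b\rangle_{L^2(\Omega)^d}\to 0$, where the last identity, as written, uses $B(u_\kappa,u)=B(u,u_\kappa)$.

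Your quasi-optimality estimate $\|u-u_{\kappa_n}\|_{L^2(\Omega)^d}\le \tfrac{C}{c}\inf_{w\in\ca{V}_{\kappa_n}}\|u-w\|_{L^2(\Omega)^d}$ is shorter and more informative. It dispenses with weak compactness, weak closedness of $\ca{V}$, the uniqueness argument and symmetry. It only needs to approximate the single function $u$ rather than every test function. It is also quantitative: any rate for $\|u-(\mathbb{P}_{\kappa_n}u)_{|SQ_{J^{\kappa_n}}}\|_{L^2(\Omega)^d}$ would immediately give a convergence rate. Your direct boundedness argument for $B$ is sound and simply replaces the appeal to the boundedness of $\ca{L}$.

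Your remark about non-nested spaces is well taken. The paper's proof needs the same sequence-wise version of Proposition~\ref{prop:UnionOfVIsDense}, since it takes $v_{\kappa_n}\in\ca{V}_{\kappa_n}$ for every $n$. Like you, it can only get this from the construction inside that proof, not from the stated density of the union.

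What the paper's compactness template buys is reusability. In Theorem~\ref{prop:ConvLHSNum} the quadrature form $\vec{B}^\num$ and the right-hand side are no longer the restrictions of the continuous ones to $\ca{V}_\kappa$. Galerkin orthogonality therefore fails, and your argument would have to become a Strang-type estimate. The weak-limit argument, by contrast, carries over essentially unchanged once (\ref{eqn:numconv}) and (\ref{eqn:numcoerc}) are available.
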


\begin{proof} This proof uses the fact that the bilinear form $B$ can also be represented as a self-adjoint operator $\ca{L}\in L^2(\Omega)^d \rightarrow L^2(\Omega)^d$, such that
\begin{equation*}
    B(u,v) = \langle -\ca{L} u, v \rangle_{L^2(\Omega)^d}=\langle  u, -\ca{L}v \rangle_{L^2(\Omega)^d}
\end{equation*}
for all $u,v\in L^2(\Omega)^d$ (See \cite{Mengesha2014}, Lemma 5). The proof then begins by showing a boundedness statement for the $u_\kappa$. For convenience, we drop the index $n$ from the $\kappa_n$ when it is not needed. Using the coercitivity of the bilinear form as in (\ref{eqn:coercBconst}), one has
\begin{equation*}
    0 \leq c \|u_\kappa\|^2_{L^2(\Omega)^d} \leq B(u_\kappa, u_\kappa) = \langle b,u_\kappa \rangle_{L^2(\Omega)^d} \leq \|b\|_{L^2(\Omega)^d}\|u_\kappa\|_{L^2(\Omega)^d}
\end{equation*}
leading to $\frac{1}{c} \|b\|_{L^2(\Omega)^d} \geq \|u_\kappa\|_{L^2(\Omega)^d}$ being uniformly bounded.

Applying the Banach-Alaoglu theorem, one obtains a weakly-$L^2$ convergent subsequence $u_{\kappa_{n_k}}$, renamed to just ${u}_{\kappa_n}$ for simplicity, with some weak limit $u'\in L^2(\Omega)^d$. Due to the weak closedness of $\ca{V}$, we have $u'\in \ca{V}$. 

The next step is proving $u' = u$ by showing that $u'$ solves (\ref{eqn:sbprobcont}), thereby proving that the entire original sequence weakly converges to $u$. Let $v\in \ca{V}$, then, by Proposition \ref{prop:UnionOfVIsDense}, there is a sequence of $(v_{\kappa_n})_{n\in\mathbb{N}}$, such that $v_{\kappa_n} \stackrel{}{\rightarrow} v$ strongly in $L^2(\Omega)^d$ and $v_{\kappa_n} \in \ca{V}_{\kappa_n}$ for all $n\in \mathbb{N}$. 
Then, since $\mathcal{L}$ is a bounded operator and therefore $\ca{L}v_{\kappa_n} \rightarrow \ca{L}v$ strongly in $L^2(\Omega)^d$,
\begin{equation*}
    \begin{aligned}
    B(u_{\kappa}, v_{\kappa}) &= \langle u_{\kappa}, -\mathcal{L} v_{\kappa} \rangle_{L^2(\Omega)^d} \xrightarrow[]{\kappa \searrow 0 } \langle u', -\mathcal{L}v\rangle_{L^2(\Omega)^d}.
    \end{aligned}
\end{equation*}
This immediately yields 
\begin{equation*}
    B(u',v) = \lim_{n\rightarrow \infty} B(u_{\kappa_n}, v_{\kappa_n})  \stackrel{sol}{=}  \lim_{n\rightarrow \infty}\langle b, v_{\kappa_n} \rangle_{L^2(\Omega)^d} = \langle b, v\rangle_{L^2(\Omega)^d}.
\end{equation*}
The only thing left to show now is that the series also converges to $u$ in the $L^2(\Omega)^d$ norm. This is done by using the fact that $\ca{V}_\kappa \subset \ca{V}$ and therefore $B(u, u_\kappa) = \langle u_\kappa, b\rangle_{L^2(\Omega)^d} = B(u_\kappa, u_\kappa)$ to obtain 
\begin{equation*}
    \begin{aligned}
        c\|u-u_\kappa\|^2_{L^2(\Omega)^d} &\leq B(u-u_\kappa,u-u_\kappa)= B(u,u-u_\kappa)- B(u_\kappa, u) + B(u_\kappa, u_\kappa) \\
        &\stackrel{}{=} \langle u-u_\kappa, b\rangle_{L^2(\Omega)^d} \xrightarrow[]{\kappa \searrow 0} 0.
    \end{aligned}    
\end{equation*}
We used the fact that $u$ solves (\ref{eqn:sbprobcont}) in the last equality.
\end{proof}

\begin{remark}\label{rm:BondBased} Note that by setting the material parameter $\alpha \equiv \tfrac{d^2 k}{m}$ in (\ref{eqn:defB}), the state-based model reduces to the bond-based model. Therefore Theorem \ref{theo:TheDiscrSolsConv} is applicable for bond-based models as well, as A5 of Assumptions \ref{ass:GenAssump} implies $0<m_0\leqq m \leqq m_1$ for some $m_0, m_1 > 0$ and therefore bounds on $\alpha \equiv \tfrac{d^2 k}{m}$ as required by A3. \end{remark}

%% file: cnt/ConvNum.tex
\section{Convergence of a Numerical Model}
The above framework relies on the exact computation of the $\vec{B}^{kl}_{ij}$ and $\vec{b}_i^k$ factors involved in (\ref{eqn:defSBdiscrConstit}). In practice, the exact computation of these factors is impractical, giving rise to the need to develop a numerically feasible model for which one can obtain a convergence result similar to Theorem \ref{theo:TheDiscrSolsConv} in Section 2. To this end, we now give explicit formulas for the constituents of the analytical discretization to which we want to apply one point quadrature. To make the use of one point quadrature possible, we will have to rewrite the integrals in the bilinear form into a sum of integrals on vanishing domains as $\kappa \rightarrow 0$. 

The right hand side does not need any further treatment, as it is given by
\begin{equation*}
    \vec{b}_i^k = \int_{\Omega_i} e_k^\top b(x) \dd x.
\end{equation*}
So, to deal with the $\vec{B}^{kl}_{ij}$ terms from (\ref{eqn:defSBdiscrConstit}), first writing out the terms in the definition of $B$ in (\ref{eqn:defB}) , one has
\begin{equation}
\label{eqn:explformB}
    \begin{aligned}
        B(u,v)&= \int_{\Omega} \left(k(x) - \frac{\alpha(x)m(x)}{d^2}\right) \int_{\Omega} (u(x') - u(x))^\top\left(\frac{x'-x}{\|x'-x\|^2}\right)  \overline{\omega}(x,x') \dd x' \\
        &\qquad\times\int_{\Omega} (v(x') - v(x))^\top\left(\frac{x'-x}{\|x'-x\|^2}\right)  \overline{\omega}(x,x') \dd x'\dd x
        \\
        &\quad+  \int_{\Omega} \alpha(x) \int_{\Omega} \omega(x'-x)\|x'-x\|^2(u(x')-u(x))^\top \left(\frac{x'-x}{\|x'-x\|^2}\right)\\
        &\quad\qquad\times(v(x')-v(x))^\top \left(\frac{x'-x}{\|x'-x\|^2}\right) \dd x' \dd x.
    \end{aligned}
\end{equation}
We then define 
\begin{equation*}
    \tau(x) := \frac{k(x)d^2}{m(x)^2} - \frac{\alpha(x)}{m(x)},
\end{equation*}
with which (\ref{eqn:explformB}) can further be rewritten to 
\begin{equation*}
    \begin{aligned}
        &B(u,v)\\
        &=\int_{\Omega} \tau(x) \int_{\Omega} (u(x') - u(x))^\top\left(x'-x\right) \omega(x'-x) \dd x'\\
        &\qquad \times\int_{\Omega} (v(x') - v(x))^\top\left(x'-x\right) \omega(x'-x) \dd x'\dd x
        \\
        &\quad + \int_{\Omega} \alpha(x) \int_{\Omega} \frac{\omega(x'-x)}{\|x'-x\|^2}(u(x')-u(x))^\top \left(x'-x\right)(v(x')-v(x))^\top \left(x'-x\right) \dd x' \dd x.
    \end{aligned}
\end{equation*}
Then, plugging in $u = \chi_{\Omega_i}e_k$ and $v = \chi_{\Omega_j}e_l$ yields that 
\begin{align}
        \vec{B}_{ij}^{kl}=&\int_{\Omega} \tau(x) \int_{\Omega} (\chii(x') - \chii(x))e_k^\top\left(x'-x\right) \omega(x'-x) \dd x' \nonumber \\
        &\qquad\times\int_{\Omega} (\chij(x') - \chij(x))e_l^\top\left(x'-x\right) \omega(x'-x) \dd x'\dd x\nonumber
        \\
        &\quad+ \int_{\Omega} \alpha(x) \int_{\Omega} \frac{\omega(x'-x)}{\|x'-x\|^2}(\chii(x') - \chii(x))e_k^\top \left(x'-x\right)\nonumber\\
        &\qquad\times(\chij(x') - \chij(x))e_l^\top\left(x'-x\right) \dd x' \dd x, \nonumber
\intertext{for $i\not= j$ we then obtain}\nonumber
        &=\int_{\Omega_i} \tau(x) \int_{\Omega\backslash\Omega_i} -e_k^\top\left(x'-x\right) \omega(x'-x) \dd x' \int_{\Omega_j} e_l^\top\left(x'-x\right) \omega(x'-x) \dd x'\dd x \nonumber\\
        &\quad+ \int_{\Omega_j} \tau(x) \int_{\Omega_i} e_k^\top\left(x'-x\right) \omega(x'-x) \dd x' \int_{\Omega\backslash\Omega_j}-e_l^\top\left(x'-x\right) \omega(x'-x) \dd x'\dd x \nonumber\\
        &\quad+ \int_{\Omega\backslash(\Omega_j \cup \Omega_i)} \tau(x) \int_{\Omega_i} e_k^\top\left(x'-x\right) \omega(x'-x) \dd x'\int_{\Omega_j} e_l^\top\left(x'-x\right) \omega(x'-x) \dd x'\dd x\nonumber\\
        &\quad-\int_{\Omega_i}\int_{\Omega_j}  (\alpha(x)+\alpha(x')) \frac{\omega(x'-x)}{\|x'-x\|^2}e_k^\top \left(x'-x\right)e_l^\top\left(x'-x\right) \dd x' \dd x\nonumber
\intertext{and for $i=j$ we have}\nonumber
        &= \int_{\Omega_i} \tau(x) \int_{\Omega\backslash \Omega_i} e_k^\top\left(x'-x\right) \omega(x'-x) \dd x'\int_{\Omega\backslash \Omega_i} e_l^\top\left(x'-x\right) \omega(x'-x) \dd x'\dd x\nonumber\\
        &\quad+\int_{\Omega\backslash \Omega_i} \tau(x) \int_{\Omega_i}e_k^\top\left(x'-x\right) \omega(x'-x) \dd x'\int_{\Omega_i} e_l^\top\left(x'-x\right) \omega(x'-x) \dd x'\dd x\nonumber\\
        &\quad+\int_{\Omega_i}\int_{\Omega\backslash\Omega_i} (\alpha(x)+\alpha(x')) \frac{\omega(x'-x)}{\|x'-x\|^2}e_k^\top \left(x'-x\right)e_l^\top\left(x'-x\right) \dd x' \dd x. \label{eqn:Bijklequation}
\end{align}
Summarizing this form and rewriting integrals over $\Omega\backslash\Omega_i$ or $\Omega\backslash(\Omega_i\cup \Omega_j)$ as sums over all relevant $\Omega_k$ making up the integration domain, one would then only have to evaluate the following integrals for $i,j,m\in I^\kappa, 1\leq k,l\leq d$, 
\begin{equation}
    \begin{aligned}
        \label{eqn:DefIInteg}
        \ca{I}^2_{ijm;kl} &:= \int_{\Omega_i} \tau(x) \int_{\Omega_j}e_k^\top\left(x'-x\right) \omega(x'-x) \dd x' \int_{\Omega_m} e_l^\top\left(x'-x\right) \omega(x'-x) \dd x'\dd x\\
        \ca{I}^1_{ij;kl} &:=\int_{\Omega_i}\int_{\Omega_j}(\alpha(x)+\alpha(x'))   \frac{\omega(x'-x)}{\|x'-x\|^2}e_k^\top \left(x'-x\right)e_l^\top\left(x'-x\right) \dd x' \dd x.
    \end{aligned}
\end{equation}
Using only these integrals, the bilinear form is then given by
\begin{equation}
    \begin{aligned}
    \label{eqn:sumeqI}
      \vec{B}^{kl}_{ij} = \begin{cases}
          \displaystyle\sum_{m\not\in\{i,j\}}\ca{I}^2_{mij;kl} - \sum_{m\not=i} \ca{I}^2_{imj;kl}-\sum_{m\not=j}\ca{I}^2_{jim;kl}- \ca{I}^1_{ij;kl} &\text{for } i\not=j  \\
           \displaystyle\sum_{n\not=i}\sum_{m\not=i}  \ca{I}^2_{inm;kl} + \sum_{m\not=i}\ca{I}^2_{mii;kl} + \sum_{m\not=i} \ca{I}^1_{im;kl} &\text{for } i=j.
      \end{cases}
    \end{aligned}
\end{equation}
In this chapter we will study the numerical version of the model considered above assuming $\omega(\zeta) = \rho(\zeta)\chi_{[0,\delta)}(\|\zeta\|)$ with $\rho(\zeta) = \frac{1}{\|\zeta\|}$ for all $\zeta\in \mathbb{R}^d$ and some fixed $\delta>0$. This numerical version is obtained by applying a one-point quadrature rule to the integrals in (\ref{eqn:DefIInteg}) and then using (\ref{eqn:sumeqI}) to obtain the numerical bilinear form $B^\num$.
This approach to discretize the peridynamic equations is not new, and was originally introduced in \cite{SillMeshFree} on bond-based models; it was merely brought into the usual variational framework here after being canonically extended to state-based models. One should note that \cite{SillMeshFree} only mathematically proved the convergence for one-dimensional models and twice continuously differentiable material parameters and displacements. 

For consistency between the numerical models and the continuous models, as well as the continuous PD model and the classical theory, we will substitute $\alpha(x)$ with $\frac{l(x)}{m(x)}$ for a function $l: \Omega \rightarrow \mathbb{R}$. \newline

First, we write the integrals in (\ref{eqn:DefIInteg}) in matrix form $\ca{I}^{1}_{ijm} := (\ca{I}^{1}_{ijm;kl})_{1\leq k,l\leq d}$ to simplify the notation. We then apply a one-point quadrature rule to obtain, using the notation $V_i := \vol(\Omega_i)$,
\begin{equation}
\label{eqn:defnumconstparts}
    \begin{aligned}
        &\ca{I}^{2,\num}_{ijm} := V_iV_jV_m\tau^\num(x_i)\rho(x_j-x_i)\rho(x_m-x_i)\left(x_j-x_i\right)\left(x_m-x_i\right)^\top w_{ij}w_{im}, \\
        &\ca{I}^{1,\num}_{ij} := V_iV_j \rho(x_j-x_i)(\alpha^\num(x_i)+\alpha^\num(x_j))\frac{\left(x_j-x_i\right)\left(x_j-x_i\right)^\top}{\|x_j-x_i\|\|x_j-x_i\|}w_{ij},
    \end{aligned}
\end{equation}
with, for $x\in \Omega_i$,
\begin{equation*}
    \tau^\num(x) := \frac{k(x_i)d^2 - l(x_i)}{m^\num(x_i)^2},\,\, \alpha^\num(x) = \frac{l(x_i)}{m^\num(x_i)},\,\, m^\num(x) := \sum_{j} w_{ij}V_j \rho(x_j-x_i) \|x_j-x_i\|^2.
\end{equation*}
The $w_{ij}$ terms now replace the $\chi_{[0,\delta)}(\|x_i-x_j\|)$ terms and allow the usage of partial area algorithms for better convergence behavior (See \cite{ImprOnePointQuadrData} for an in-depth analysis on partial area/volume weights and convergence behavior). There will be requirements for these terms later in Assumptions \ref{ass:NumDiscr} justifying this substitution. \par

Due to our choice to evaluate $l$, $k$ and $b$ at the mid-points $x_i$ of the $B_i$, it is necessary to extend the input data suitably beyond $\Omega$ as the $x_i$ do not necessarily lie in $\Omega$. However, any extension of the input data by a constant, for example $0$, is compatible with our regularity requirements on the input data stated later in Assumptions \ref{ass:NumDiscr}. 

\input{cnt/num_mot/nummot}

\begin{remark}\label{rm:BondBasedCompat} As outlined in Remark \ref{rm:BondBased}, to obtain the bond-based model from the state-based model on the continuous level, one would have to set $\alpha(x) := \tfrac{d^2k(x)}{m(x)}$. Doing so would require the exact evaluation of $m$ in all points $x_i$ if this model was discretized using $\alpha(x_i)$ in the above numerical integrals instead of $\alpha^\num(x_i)$. Another significant drawback is that it would not imply $\tau^\num(x_i) = 0$ if 
\begin{equation*}
    \tau^\num(x_i) = \tfrac{k(x_i)d^2}{m^\num(x_i)^2} - \tfrac{\alpha(x_i)}{m^\num(x_i)} 
\end{equation*}
had been used instead, as in general $m(x_i) \not= m^\num(x_i)$. This would mean that the discretized bond-based model would contain state-based terms. \newline
With the substitution $\alpha(x) = \tfrac{l(x)}{m(x)}$ however, the $m(x)$ is also numerically approximated. So with $l(x) = d^2 k(x)$, the discretization of the bond-based continuous model is equal to the usual discretization of the bond-based model, that is, all state-based terms vanish. This implies that the following results are all also valid for bond-based models without any further comment. Additionally, since $\alpha = \frac{c_d \mu(x)}{m(x)}$ (See \cite{PeriStatesConstMod}, Section 15) for a constant $c_d$ depending on the dimension $d$ and $\mu(x)$ being the shear modulus from the classical theory, modeling materials from classical theory in this discrete peridynamic setting does not require the exact evaluation of $m(x)$ either. 
\end{remark}

The right hand side of the continuous equation can also be numerically approximated as
\begin{equation}
\label{eqn:approxrhsEq}
    \vec{b}^{k\,\num}_{i} = b(x_i)^\top e_k 
\end{equation}
defining one possible choice for a vector $\vec{b}^\num_\kappa \in (\mathbb{R}^d)^{|I^\kappa|}$.

Any such choice leads to the final discrete problem
\begin{equation}
\label{eqn:numsyssb}
    \text{Find }\vec{u}_\kappa\in \vec{\ca{V}}_\kappa \text{  such that } \forall \vec{v}\in \vec{\ca{V}}_\kappa: \vec{u}_\kappa^\top\vec{B}^\num \vec{v} = M\vec{b}^{\num\top}_\kappa \vec{v},
\end{equation}
with the matrix $M\in (\mathbb{R}^{d\times d })^{|I^\kappa|\times |I^\kappa|}$ given by $M_{ii}=\text{diag}(V_i,...,V_i)$ and $0$ elsewhere.\newline

To solve this system, note that it can also be reduced to  
\begin{equation}
\label{eqn:rednumsb}
    \text{Find } \tilde{u}_\kappa \in \mathbb{R}^{d|J^\kappa|} \text{ s.t. } \tilde{B}^\num\tilde{u}_\kappa  = \tilde{b}^\num
\end{equation}
just like (\ref{eqn:finaldiscr}), again without changing the well-posedness or solution space compared to (\ref{eqn:numsyssb}). Note, that the dependency of $\tilde{B}^\num, \vec{B}^\num$ and $\tilde{b}^\num$ on $\kappa$ is omitted from the notation when not needed. \newline
 
The matrix $\vec{B}^\num$ is interpreted as a  $(\mathbb{R}^{d\times d})^{|I^\kappa| \times |I^\kappa|}$ matrix and given just like in (\ref{eqn:sumeqI}) by
\begin{equation}
\label{eqn:bstiffnumconst}
    \begin{aligned}
        \vec{B}^\num_{ij}= \begin{cases}
            \displaystyle\sum_{m\not\in\{i,j\}}\ca{I}^{2,\num}_{mij} - \sum_{m\not=i} \ca{I}^{2,\num}_{imj}-\sum_{m\not=j}\ca{I}^{2,\num}_{jim}- \ca{I}^{1,\num}_{ij} & \text{for }i\not= j\\
            \displaystyle\sum_{n\not=i}\sum_{m\not=i}  \ca{I}^{2,\num}_{inm} + \sum_{m\not=i}\ca{I}^{2,\num}_{mii} + \sum_{m\not=i} \ca{I}^{1,\num}_{im} & \text{for } i = j.
        \end{cases}
    \end{aligned}
\end{equation}

The goal of this chapter will be to show that the solutions $\vec{u}^\num_\kappa \in \mathbb{R}^{d|I^\kappa|}$ of (\ref{eqn:numsyssb}), embedded into $L^2(\Omega)^d$ via $\Kk$, converge to the continuous solution $u$ of (\ref{eqn:sbprobcont}), that is,
\begin{equation*}
    \Kk \vec{u}^\num_\kappa \xrightarrow[L^2(\Omega)^d]{\kappa \searrow 0} u.
\end{equation*}
For this we will need assumptions on the input data which will be specified next.

\subsection{The Convergence Proof}
For the numerical convergence proof, we assume the following. \newline

\begin{assumptions}(\textit{Assumptions on the Model for the Numerical Discretization})\label{ass:NumDiscr} \\
We assume  $\kappa < \frac{\delta}{\sqrt{d}}$ and further,
\begin{enumerate}[leftmargin=1.8em]
    \item[A1] \textit{The Choice for the Weighting Function}: We set $\forall \zeta\in\mathbb{R}^d:\omega(\zeta) = \rho(\zeta)\chi_{[0,\delta)}(\|\zeta\|)$ for some $\delta>0$ and $\rho(z) = \frac{1}{\|z\|}$.
    \item[A2] \textit{Regularity of the Material Parameters}: $l, k: {\Omega} \rightarrow \mathbb{R}$ a.e. locally $L,K$-Lipschitz continuous for some $L,K>0$ respectively, and $0<l_0 \leqq l \leqq l_1$ as well as $0<k_0\leqq k\leqq k_1$ for some constants $0<l_0,l_1, k_0,k_1 \in \mathbb{R}$.
    \item[A3] \textit{Convergence of the Right-Hand Side}: The choice of the numerical right hand side $\vec{b}_\kappa^\num \in \mathbb{R}^{d|I|^\kappa}$ has to fulfill 
    \begin{equation*}
        \|b - \Kk \vec{b}_\kappa^\num \|_{L^2(\Omega)^d} \rightarrow 0.
    \end{equation*}
    \item[A4] \textit{Limitations for the Partial Area Weights}: The $w_{ij}$ terms fulfill the following requirements for all $i,j\in I^\kappa$.
    \begin{enumerate}
        \item If $\|x_i-x_j\| \geq \delta + \sqrt{d}\frac{\kappa}{2}$, then $w_{ij} = 0$.
        \item If $\|x_i-x_j\| \leq \delta - \sqrt{d}\frac{\kappa}{2}$, then $w_{ij} = 1$.
        \item $0\leq w_{ij} \leq 1$.
        \item $w_{ij} = w_{ji}$.
    \end{enumerate}
    \item[A5] \textit{Regularity of the Domain}: $\Omega$ is a bounded and connected Lipschitz domain, in particular, it satisfies an interior cone condition for some angle $\theta>0$ and radius $h>0$.
    \item[A6] \textit{Regularity of the Dirichlet Boundary Condition}: $\Theta$ is open, non-empty with $\vol(\partial \Theta) = 0$.
\end{enumerate}
\end{assumptions}

The main idea behind the assumptions involving the a.e. local $L$-Lipschitz continuity is to allow bounded jumps (in the sense of discontinuities) on a set of measure zero, while keeping sufficient control over the function everywhere else, uniform with respect to $x$. Besides the material parameters, this can also be applied to the external body forces $b$. This is shown in Corollary \ref{corol:BRHS}, which will prove that choosing $b$ to be a.e. locally $L$-Lipschitz is a sufficient condition for $\vec{b}^\num_\kappa := (b(x_i))_{i\in I^\kappa}$ to fulfill the assumptions above.\newline

We begin with the following fundamental lemma that allows extending the local Lipschitz continuity to a global one on convex sets that don't intersect the set where the function behaves undesirably. 

\begin{lemma}[Globalization of a.e. Local $L$-Lipschitz Continuity]\label{lemm:LFS} \newline Let $f: {\Omega} \rightarrow \mathbb{R}^n$ be a bounded, a.e. locally $F$-Lipschitz continuous function with $F>0$, and 
\begin{equation*}
    \LFS^F(f) := \{ x \in {\Omega} : f \text{ fails to be locally }F\text{-Lipschitz in } x\}.
\end{equation*}
Then, for all convex sets $B \subset {\Omega}$ with $B \cap \LFS^F(f) = \emptyset$, we have $f_{|B} \in C^{0,1}(B)^n$ with the same Lipschitz constant $F$.\end{lemma}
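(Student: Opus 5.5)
Fix a convex set $B\subset\Omega$ with $B\cap\LFS^F(f)=\emptyset$ and two points $x,y\in B$. The plan is to move along the segment $[x,y]$, which lies in $B$ by convexity, and chain local Lipschitz estimates with a compactness argument. By assumption, every $z\in[x,y]$ has a radius $r_z>0$ such that $\|f(a)-f(b)\|\le F\|a-b\|$ for all $a,b\in B_{r_z}(z)\cap\Omega$. This is how I read ``locally $F$-Lipschitz in $z$'': the constant is the same $F$ everywhere and only the radius depends on $z$.

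Parametrize $\gamma(t)=x+t(y-x)$ for $t\in[0,1]$. The preimages $\gamma^{-1}(B_{r_z}(z))$ form an open cover of the compact interval $[0,1]$. Take a Lebesgue number $\eta>0$ of this cover and choose a partition $0=t_0<t_1<\dots<t_N=1$ with mesh smaller than $\eta$. Then each consecutive pair $\gamma(t_{k-1}),\gamma(t_k)$ lies in a single ball $B_{r_z}(z)$, and both points belong to $\Omega$ because $B\subset\Omega$.

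Apply the local bound on each piece and sum with the triangle inequality:
\[
\|f(x)-f(y)\|\le\sum_{k=1}^N\|f(\gamma(t_k))-f(\gamma(t_{k-1}))\|\le F\sum_{k=1}^N (t_k-t_{k-1})\|y-x\|=F\|x-y\|.
\]
The key point is that collinearity makes the lengths add exactly, so no constant is lost. This shows $f_{|B}$ is $F$-Lipschitz on $B$. The bound already implies continuity, and boundedness of $f$ is given. Together these give $f_{|B}\in C^{0,1}(B)^n$ with constant $F$, applied componentwise or with the Euclidean norm as the paper does.

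The only delicate step is making sure the local property is used in the right form. The hypothesis must hold at every point of the segment, not just almost every point, and this is exactly what $B\cap\LFS^F(f)=\emptyset$ together with convexity provides. The local constant must also be uniformly $F$, which the definition of $\LFS^F$ guarantees. If ``locally $F$-Lipschitz in $z$'' were instead read as a pointwise bound $\|f(a)-f(z)\|\le F\|a-z\|$ near $z$, the same chaining still works: insert the centres $z$ between consecutive partition points and use that they can be chosen on the segment in monotone order, which a finite subcover arranged along $[0,1]$ permits. I would state this adaptation briefly.
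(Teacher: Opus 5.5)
Your proof is correct and follows the same route as the paper. You restrict to the segment $[x,y]\subset B$, cover it by finitely many neighborhoods on which $f$ is $F$-Lipschitz, subdivide, and sum the estimates, using collinearity so the constant $F$ is not lost; your Lebesgue-number step simply makes precise the paper's ``appropriately subdividing the line segment.''
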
 
\begin{proof}
    By assumption, for any $x,x'\in B$, the line segment between $x$ and $x'$ lies in $B$. Since this line segment is also compact, it can be covered by finitely many neighborhoods on which $f$ is $F$-Lipschitz. Appropriately subdividing the line segment between $x$ and $x'$ and applying the $F$-Lipschitz bound on each segment then yields the claim after summing the bounds back together. 
\end{proof}

The next proposition states a convergence result for these functions.

\begin{proposition}[Convergence of Midpoint Approximation]\label{prop:LJfuncAreNumerical} Let $u:{\Omega}\rightarrow \mathbb{R}^d$ be a bounded, almost everywhere locally $L$-Lipschitz function, then
\begin{equation*}
    \|\fnum_\kappa(u) - u\|_{L^2(\Omega)^d} \xrightarrow[]{\kappa \searrow 0} 0,
\end{equation*}
with 
\begin{equation*}
    L^2(\Omega)^d \ni \fnum_\kappa(u) := \sum_{i\in I^\kappa} \chi_{\Omega_i}  u(x_i). 
\end{equation*}\end{proposition}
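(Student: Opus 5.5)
The plan is to split the error $\|\fnum_\kappa(u)-u\|_{L^2(\Omega)^d}^2=\sum_{i\in I^\kappa}\int_{\Omega_i}\|u(x_i)-u(x)\|^2\dd x$ into two groups of cells. On \emph{good} cells the local Lipschitz property, globalized via Lemma \ref{lemm:LFS}, gives a pointwise $O(\kappa)$ bound. The \emph{bad} cells are those touching the set where $u$ misbehaves or touching $\partial\Omega$. On these we only use boundedness of $u$, and show via Proposition \ref{prop:ClosedZeroSetApprox} that their total volume vanishes. Throughout, $u$ is extended outside $\Omega$ by a constant as described before the assumptions, so $u(x_i)$ is defined, and $M:=\sup\|u\|<\infty$ is a bound valid for the extension as well.

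\textbf{Step 1: the exceptional set.} Let $\LFS^L(u)$ be as in Lemma \ref{lemm:LFS}. I would first observe that $\LFS^L(u)$ is relatively closed in $\Omega$. Indeed, if $u$ is $L$-Lipschitz on an open neighbourhood $U$ of $x$, then $u$ is locally $L$-Lipschitz at every point of $U$, so the complement is open. By assumption $\LFS^L(u)$ has measure zero. Set
\[A:=\overline{\LFS^L(u)}\cup\partial\Omega,\]
with the closure taken in $\mathbb{R}^d$. Then $A\subset \LFS^L(u)\cup\partial\Omega$, so $A$ is compact since $\Omega$ is bounded. Moreover $\vol(\partial\Omega)=0$ because $\Omega$ is a Lipschitz domain (A5), hence $A$ has measure zero.

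\textbf{Step 2: good cells.} Call $i\in I^\kappa$ good if $\overline{B_i}\cap A=\emptyset$. Since $\overline{B_i}$ is connected, meets $\Omega$ (because $\vol(\Omega_i)>0$) and does not meet $\partial\Omega$, the usual connectedness argument gives $\overline{B_i}\subset\Omega$. In particular $\Omega_i=B_i$ and $x_i\in\Omega$. The set $\overline{B_i}$ is convex and disjoint from $\LFS^L(u)$, so Lemma \ref{lemm:LFS} shows that $u$ is $L$-Lipschitz on $\overline{B_i}$. Hence $\|u(x)-u(x_i)\|\le L\sqrt d\,\kappa/2$ for $x\in\Omega_i$, and the good cells contribute at most $\vol(\Omega)L^2d\kappa^2/4\to0$.

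\textbf{Step 3: bad cells.} For the remaining $i$ we have $\overline{B_i}\cap A\neq\emptyset$. On such cells $\|u(x_i)-u(x)\|\le 2M$, so their contribution is at most
\[4M^2\,\vol\Big(\bigcup_{i:\overline{B_i}\cap A\neq\emptyset}B_i\Big),\]
which tends to $0$ as $\kappa\searrow0$ by Proposition \ref{prop:ClosedZeroSetApprox} applied to the compact null set $A$. Adding the two contributions gives the claim.

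\textbf{Main obstacle.} The delicate point is Step 1: we need an exceptional set that is simultaneously compact and of measure zero, and that also captures cells straddling $\partial\Omega$, where $x_i$ may lie outside $\Omega$ and the extension may jump. Taking the closure of the failure set and adding $\partial\Omega$ handles both issues. This requires checking that taking the closure adds only boundary points, which is exactly where relative closedness of $\LFS^L(u)$ in $\Omega$ is used, together with $\vol(\partial\Omega)=0$.
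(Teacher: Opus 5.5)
Your proof is correct and follows essentially the same route as the paper. You split the cells according to whether $\overline{B_i}$ meets $\partial\Omega\cup\LFS^L(u)$, bound the bad cells by $4\|u\|_\infty^2$ times a vanishing volume via Proposition \ref{prop:ClosedZeroSetApprox}, and use the connectedness argument with Lemma \ref{lemm:LFS} to get an $O(\kappa)$ bound on the good cells. You also fill in two points the paper only asserts or leaves implicit: compactness of the exceptional set (via relative closedness of $\LFS^L(u)$ in $\Omega$), and the extension of $u$ to midpoints outside $\Omega$.
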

\begin{proof} We first split $I^\kappa$ into two sets as follows,
\begin{equation*}
    \hat{I} := \{i\in I^\kappa : \overline{B_i} \cap (\partial \Omega \cup \LFS^L({u})) \not= \emptyset \},\,\,\, \hat{II} := I^\kappa \backslash \hat{I}. 
\end{equation*}
One then has, using $\|\cdot\|_\infty$ to denote the supremum norm,
\begin{equation*}
    \begin{aligned}
        \|\fnum_\kappa(u) -u\|^2_{L^2(\Omega)^d} &= \sum_{i\in I^\kappa} \int_{\Omega_i}\|u(x)-u(x_i)\|^2 \dd x\\
        &=\sum_{i\in \hat{I}} \int_{\Omega_i}\|u(x)-u(x_i)\|^2 \dd x +\sum_{i\in \hat{II}} \int_{\Omega_i}\|u(x)-u(x_i)\|^2 \dd x \\
        &\stackrel{}{\leq} 4\|u\|_\infty^2 \vol\left(\bigcup_{i \in \hat{I}} B_i\right)+\sum_{i\in \hat{II}} \int_{\Omega_i}\|u(x)-u(x_i)\|^2 \dd x. \\
    \end{aligned}
\end{equation*}
Invoking Proposition \ref{prop:ClosedZeroSetApprox} now yields
\begin{equation*}
    \vol\left(\bigcup_{i \in \hat{I}} B_i \right) = \vol\left(\bigcup_{i : \overline{B_i} \cap S \not= \emptyset} B_i \right)\xrightarrow{\kappa \searrow 0} 0,
\end{equation*}
since $S:= (\partial \Omega \cup \LFS^L({u}))$ is by definition a compact zero set.

Further, $i \in \hat{II}$ implies that $\Omega_i = B_i$ as $B_i$ is path connected and $\Omega$ is open, so that one could apply a connectedness argument to show
\begin{equation*}
    \Omega_i \not= B_i \Longrightarrow \emptyset \not= \overline{B_i} \cap \partial \Omega \subset \overline{B_i} \cap (\partial \Omega \cup \LFS^L({u})) \Longrightarrow i \not\in\hat{II}.
\end{equation*}
Due to Lemma \ref{lemm:LFS}, one obtains that $u$ has to be L-Lipschitz continuous on $\Omega_i = B_i$ due to $\overline{B_i}\cap \LFS^L({u}) = \emptyset$, ultimately yielding
\begin{equation*}
    \sum_{i\in \hat{II}} \int_{\Omega_i}\|u(x)-u(x_i)\|^2 \dd x \leq L^2\vol(\Omega)d \kappa^2 \xrightarrow[]{\kappa \searrow 0} 0,
\end{equation*}
finishing the proof.
\end{proof}

\begin{corollary}[Sufficient Conditions for Assumption A3] \label{corol:BRHS} Proposition \ref{prop:LJfuncAreNumerical} implies that the choice $\vec{b}^\num_\kappa =: (b(x_i))_{i\in I^\kappa}$ as defined in (\ref{eqn:approxrhsEq}) fulfills A3 in Assumptions \ref{ass:NumDiscr} if $b$ is bounded and a.e. locally $L$-Lipschitz for some $L>0$. \qedwhite\end{corollary}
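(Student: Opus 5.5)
The plan is to apply Proposition \ref{prop:LJfuncAreNumerical} to the body force $b$ itself, and then check that the resulting piecewise constant function is exactly $\Kk \vec{b}^\num_\kappa$.

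First I identify the two objects. By (\ref{eqn:approxrhsEq}) and Definition \ref{def:ProjDiscr}, the choice $\vec{b}^\num_\kappa = (b(x_i))_{i\in I^\kappa}$ gives
\begin{equation*}
\Kk \vec{b}^\num_\kappa = \sum_{i\in I^\kappa} \chi_{\Omega_i}\, b(x_i) = \fnum_\kappa(b).
\end{equation*}
This only makes sense once $b$ is defined at every midpoint $x_i$, including those outside $\Omega$. So $b$ is first extended beyond $\Omega$, for example by $0$ (or by any constant, as remarked earlier). The extension must keep $b$ bounded and a.e. locally $L$-Lipschitz on the set where the midpoints live. Any newly created failure points lie in $\partial\Omega$, which has measure zero by A5.

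Next I check the hypotheses of Proposition \ref{prop:LJfuncAreNumerical}. We assumed $b$ bounded and a.e. locally $L$-Lipschitz, which is exactly what the proposition requires (with $n=d$). The proof of that proposition already treats the cubes with $\overline{B_i}\cap(\partial\Omega\cup \LFS^L(b))\neq\emptyset$ as the exceptional set $\hat I$. That set is controlled only through the bound $4\|b\|_\infty^2$ and the vanishing volume from Proposition \ref{prop:ClosedZeroSetApprox}. So whatever values the extension assigns at midpoints outside $\Omega$ are harmless: those cubes belong to $\hat I$, and only boundedness of the extension is used there.

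The proposition then gives $\|\fnum_\kappa(b)-b\|_{L^2(\Omega)^d}\to 0$. By the identity above, this is precisely A3 of Assumptions \ref{ass:NumDiscr}.

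The only delicate point is the extension step: making sure that the extended $b$ stays bounded, and that the midpoints lying outside $\Omega$ fall only into cubes counted in $\hat I$. Once this bookkeeping is checked, everything else is a direct citation.
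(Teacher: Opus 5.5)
Your proposal is correct and matches the paper. The paper treats the corollary as immediate, since $\Kk\vec{b}^\num_\kappa = \fnum_\kappa(b)$ and Proposition \ref{prop:LJfuncAreNumerical} then gives A3 directly. Your extra bookkeeping on extending $b$ outside $\Omega$ correctly fills in what the paper leaves implicit: midpoints $x_i\notin\Omega$ only occur in cubes meeting $\partial\Omega$, so they lie in $\hat I$, where only boundedness is used.
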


\begin{remark} \label{rm:WeightsAreWellBehaved} Note that both choices for weights used in Remark \ref{rm:WhyWeights} fulfill requirement A4 in Assumptions \ref{ass:NumDiscr}. In fact, all that A4 requires through the constraints (a) and (b) is that elements that geometrically fully lie outside or inside the neighborhood of $x_i$ due to the distance of $x_j$ to $x_i$ have to have exact weights, which we used in Lemma \ref{lemm:BoundOnDiffOfKappa}. These conditions are fulfilled by many practically used weights since testing the distance $\|x_j-x_i\|$ is already part of the algorithm to calculate the weights, or is otherwise feasible. For example, additionally to the weights in Remark \ref{rm:WhyWeights} and those suggested in \cite{Scabbia2023}, the PD-LAMMPS algorithm outlined in \cite{ImprOnePointQuadrData} also fulfills these assumptions. However, the assumptions as they are posed here are too strict for the weights mentioned in \cite{LIU2024109115} based on Monte Carlo integration to approximate the exact weights in an efficient manner. They are also too strict for the weights constructed in \cite{TRASK2019151}, aiming to find an asymptotically consistent quadrature rule resulting from a least squares problem under the constraint that a given subspace is being exactly reproduced. This is because last two choices do not guarantee that the weights $w_{ij}$ fulfill requirement (b) of A4.
\end{remark}

The next steps to prove the convergence of this numerical model are structured as follows. We will first rewrite the numerical bilinear form into two equivalent forms to show point-wise convergence of the induced linear operator as well as coercitivity. These reformulations are lengthy, yet in essence no different than in the continuous case (See \cite{Mengesha2014}, proof of Lemma 5, Proposition 1).
We begin with the operator form $\ca{L}^\num_\kappa$, motivated by the continuous equivalent
\begin{equation*}
    B(u,v) = \langle u, -\ca{L} v\rangle_{L^2(\Omega)^d}  \,\,\longrightarrow\,\, \vec{u}\vec{B}^\num \vec{v} =: \langle \Kk \vec{u}, -\ca{L}^\num_\kappa \Kk \vec{v} \rangle_{L^2(\Omega)^d},
\end{equation*}
then we will use the potential form $W^\num_\kappa$, motivated by
\begin{equation*}
    B(u,u) = 2 \int_{\Omega} W(u,x)\dd x \,\, \longrightarrow \,\, \vec{u}\vec{B}^\num\vec{u}=: 2\sum_{i}V_iW^\num_\kappa(\vec{u}, x_i).
\end{equation*}
The first step is then to show that, for some $C>0$, 
\begin{equation}
\label{eqn:numconv}
     \forall v\in L^2(\Omega)^d: \ca{L}^\num_\kappa v \xrightarrow[L^2(\Omega)^d]{\kappa \searrow 0} \ca{L}v,\,\,\text{ and } \|\ca{L}^\num_\kappa\| \leqq C \text{ independent of } \kappa,
\end{equation}
which will be done in Corollary \ref{corol:NumConvCoroll}.
The second step will be to prove that for $\kappa>0$ small enough and for all $\vec{u} \in \vec{\ca{V}}_\kappa$,
\begin{equation}
\label{eqn:numcoerc}
    c^\num\|\Kk \vec{u}\|_{L^2(\Omega)^d}^2\leq 2\sum_{i}V_iW^\num_\kappa(\vec{u}, x_i)
\end{equation}
for a $c^\num >0$ independent of $0<\kappa<<1$. This will be proven later in Corollary \ref{corol:CoercResult}.\newline
The latter property is based on the proof of the coercitivity of the continuous bilinear form (See \cite{Mengesha2014}, Proposition 1 and Proposition 2).  
These two requirements are sufficient for us to now prove the convergence. 

\begin{theorem}[Convergence of the Numerical Discretization]\label{prop:ConvLHSNum} Let $\vec{u}^\num_\kappa$ be the solutions to the numerical discrete systems (\ref{eqn:numsyssb}), then under Assumptions \ref{ass:NumDiscr}, as well as (\ref{eqn:numconv}) and (\ref{eqn:numcoerc}), 
\begin{equation*}
    \Kk \vec{u}^\num_\kappa \xrightarrow[L^2(\Omega)^d]{\kappa \searrow 0} u.
\end{equation*}\end{theorem}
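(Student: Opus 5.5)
The plan mirrors the proof of Theorem \ref{theo:TheDiscrSolsConv}, with the exact operator $\ca{L}$ replaced by $\ca{L}^\num_\kappa$ and the exact right-hand side replaced by $\Kk\vec{b}^\num_\kappa$. Write $u_\kappa := \Kk\vec{u}^\num_\kappa \in \ca{V}_\kappa$ and $b_\kappa := \Kk \vec{b}^\num_\kappa$. Since $M$ is diagonal with entries $V_i$, the right-hand side of (\ref{eqn:numsyssb}) equals $\langle b_\kappa, \Kk\vec{v}\rangle_{L^2(\Omega)^d}$. Hence the discrete problem reads
\begin{equation*}
\langle u_\kappa, -\ca{L}^\num_\kappa v\rangle_{L^2(\Omega)^d} = \langle b_\kappa, v\rangle_{L^2(\Omega)^d} \quad \text{for all } v \in \ca{V}_\kappa .
\end{equation*}

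\textbf{Uniform bound.} Test with $v=u_\kappa$ and use the coercivity (\ref{eqn:numcoerc}) together with $\vec{u}^\top\vec{B}^\num\vec{u} = 2\sum_i V_i W^\num_\kappa(\vec{u},x_i)$. This gives $c^\num\|u_\kappa\|^2 \le \|b_\kappa\|\,\|u_\kappa\|$. By A3, $\|b_\kappa\|$ is bounded, so $\|u_\kappa\|_{L^2(\Omega)^d}$ is bounded uniformly in $\kappa$. Well-posedness of each discrete system also follows from this coercivity, as in Section 2.

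\textbf{Weak convergence.} Extract a weakly convergent subsequence $u_\kappa \rightharpoonup u'$; since $\ca{V}$ is weakly closed, $u' \in \ca{V}$. Fix $v\in\ca{V}$ and take $v_\kappa\in\ca{V}_\kappa$ with $v_\kappa\to v$ strongly, by Proposition \ref{prop:UnionOfVIsDense}. Split
\begin{equation*}
\ca{L}^\num_\kappa v_\kappa - \ca{L}v = \ca{L}^\num_\kappa(v_\kappa - v) + (\ca{L}^\num_\kappa v - \ca{L} v).
\end{equation*}
The first term tends to zero by the uniform bound $\|\ca{L}^\num_\kappa\|\le C$, and the second by the pointwise convergence in (\ref{eqn:numconv}). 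Hence $\ca{L}^\num_\kappa v_\kappa\to\ca{L}v$ strongly. Weak-times-strong convergence then gives $\langle u_\kappa,-\ca{L}^\num_\kappa v_\kappa\rangle \to \langle u', -\ca{L}v\rangle = B(u',v)$, and also $\langle b_\kappa, v_\kappa\rangle\to\langle b,v\rangle$. So $u'$ solves (\ref{eqn:sbprobcont}), and $u'=u$ by uniqueness. The whole sequence therefore converges weakly to $u$.

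\textbf{Strong convergence.} This is the main obstacle, because Galerkin orthogonality is lost. I would use the discrete energy instead. Let $w_\kappa := \ProjP$-based approximants of $u$ in $\ca{V}_\kappa$ (again from Proposition \ref{prop:UnionOfVIsDense}), so $w_\kappa\to u$ strongly. Then
\begin{align*}
c^\num\|u_\kappa-w_\kappa\|^2 &\le \langle u_\kappa - w_\kappa, -\ca{L}^\num_\kappa(u_\kappa-w_\kappa)\rangle \\
&= \langle b_\kappa, u_\kappa - w_\kappa\rangle - \langle u_\kappa - w_\kappa, -\ca{L}^\num_\kappa w_\kappa\rangle .
\end{align*}
The equality uses the discrete equation and the symmetry of $\vec{B}^\num$, which follows from A4(d) and the symmetric structure of (\ref{eqn:bstiffnumconst}). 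In the first term $b_\kappa\to b$ strongly while $u_\kappa-w_\kappa\rightharpoonup 0$, so it vanishes in the limit. In the second term $\ca{L}^\num_\kappa w_\kappa\to\ca{L}u$ strongly by the same splitting argument, while $u_\kappa-w_\kappa\rightharpoonup0$, so it vanishes as well. Thus $\|u_\kappa - w_\kappa\|\to0$, and since $w_\kappa\to u$, we conclude $u_\kappa\to u$ in $L^2(\Omega)^d$. The delicate points are applying (\ref{eqn:numcoerc}) on differences, which is valid because $u_\kappa-w_\kappa\in\ca{V}_\kappa$, and using the symmetry of $\vec{B}^\num$ to pass between the two forms.
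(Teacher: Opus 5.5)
Your proposal is correct and follows the paper's proof: a uniform bound from (\ref{eqn:numcoerc}), Banach--Alaoglu plus the density result and the splitting of $\ca{L}^\num_\kappa v_\kappa - \ca{L}v$ to identify the weak limit, and an energy estimate on $\Kk(\vec{u}^\num_\kappa - \vec{s}_\kappa)$ for strong convergence. The one difference is that you group the energy terms into the single pairing $\langle u_\kappa - w_\kappa, -\ca{L}^\num_\kappa w_\kappa\rangle$, which avoids proving $\vec{s}_\kappa^\top\vec{B}^\num\vec{s}_\kappa \to B(u,u)$ separately, and you state explicitly the symmetry of $\vec{B}^\num$ that the paper uses only implicitly.
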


\begin{proof} Let $\kappa>0$ be small enough for  (\ref{eqn:numcoerc}) to hold, as well as
\begin{equation*}
    \|\Kk \vec{b}^\num_\kappa - b\|_{L^2(\Omega)^d} \leq \tfrac{1}{2} \|b\|
\end{equation*}
made possible by $\Kk\vec{b}^\num_\kappa  \rightarrow b$ as assumed in A3 of Assumptions \ref{ass:NumDiscr}. Then, using (\ref{eqn:numcoerc}) and the fact that $\vec{u}_\kappa^\num$ is the solution of (\ref{eqn:numsyssb}),
\begin{equation*}
      \begin{aligned}
           c^\num\|\Kk  \vec{u}^\num_\kappa\|_{L^2(\Omega)^d}^2&\stackrel{}{\leq} 2\sum_{i}V_iW^\num_\kappa( \vec{u}^\num_\kappa, x_i) = \vec{u}^{\num\top}_\kappa \vec{B}^\num \vec{u}^\num_\kappa \\
           &\stackrel{}{=} M\vec{b}^{\num\top}_\kappa  \vec{u}^\num_\kappa = \langle \Kk \vec{b}^{\num}_\kappa, \Kk  \vec{u}^\num_\kappa \rangle_{L^2(\Omega)^d} \\
           &\leq \|\Kk \vec{b}^{\num}_\kappa\|_{L^2(\Omega)^d}\| \Kk \vec{u}^\num_\kappa\|_{L^2(\Omega)^d} \\
           &\leq 1.5\|b\|_{L^2(\Omega)^d}\|\Kk \vec{u}^\num_\kappa\|_{L^2(\Omega)^d},
      \end{aligned}
\end{equation*}
implying that the sequence $\Kk \vec{u}^\num_\kappa$ is uniformly bounded in $L^2(\Omega)^d$. By applying Banach-Alaoglu, one obtains a subsequence, written like the original sequence for convenience, and a weak limit $u'\in L^2(\Omega)^d$. This weak limit must lie in $\ca{V}$ due to the weak closedness of $\ca{V}$. \newline
We will now show $u' = u$. Let $v\in \ca{V}$. Using Proposition \ref{prop:UnionOfVIsDense}, there exist $\vec{v}_\kappa \in \vec{\ca{V}}_\kappa$ such that $\Kk \vec{v}_\kappa \rightarrow v$ in $L^2(\Omega)^d$, then,
\begin{equation*}
    M\vec{b}^{\num\top}_\kappa \vec{v}_\kappa = \langle \Kk \vec{b}^{\num}_\kappa, \Kk \vec{v}_\kappa \rangle_{L^2(\Omega)^d} \xrightarrow[]{\kappa \searrow 0} \langle b, v\rangle_{L^2(\Omega)^d},
\end{equation*}
as well as, using (\ref{eqn:numconv}),
\begin{equation*}
    M\vec{b}^{\num\top}_\kappa \vec{v}_\kappa 
 \stackrel{}{=} \vec{u}_\kappa^{\num\top}\vec{B}^\num\vec{v}_\kappa \stackrel{}{=} \langle \Kk \vec{u}_\kappa^{\num}, -\ca{L}^\num_\kappa\Kk \vec{v}_\kappa\rangle_{L^2(\Omega)^d}.
\end{equation*}
Further, again with (\ref{eqn:numconv}),
\begin{equation*}
    \|\ca{L}^\num_\kappa\Kk \vec{v}_\kappa - \ca{L}v\|_{L^2(\Omega)^d} \leq \|(\ca{L}^\num_\kappa- \ca{L})v\|_{L^2(\Omega)^d} + \|\ca{L}^\num_\kappa\|\|v-\Kk\vec{v}_\kappa\|_{L^2(\Omega)^d} \xrightarrow[]{\kappa \searrow 0} 0
\end{equation*}
leads to 
\begin{equation*}
    \langle \Kk \vec{u}^{\num\top}_\kappa, -\ca{L}^\num_\kappa\Kk \vec{v}_\kappa\rangle_{L^2(\Omega)^d} \xrightarrow[ ]{\kappa \searrow 0} \langle u', -\ca{L} v\rangle_{L^2(\Omega)^d} .
\end{equation*}
In summary,
\begin{equation*}
    \langle u', -\ca{L} v \rangle_{L^2(\Omega)^d}  = \langle b, v\rangle_{L^2(\Omega)^d} 
\end{equation*}
for all $v\in \ca{V}$, therefore $u'=u$. 
Further, the uniqueness of $u$ implies that the whole sequence $\Kk \vec{u}^\num_\kappa$ is weakly convergent to $u$.

We now show that the sequence $\Kk \vec{u}^\num_\kappa$ also strongly converges to $u$ with respect to the $L^2(\Omega)^d$ norm. Let $\vec{s}_\kappa \in \vec{\ca{V}}_\kappa$ such that $\Kk \vec{s}_\kappa \rightarrow u$ in $L^2(\Omega)^d$ obtained just like in Proposition \ref{prop:UnionOfVIsDense}.
We then have the inequality
\begin{equation*}
    \|\Kk \vec{u}_\kappa^\num -u\|_{L^2(\Omega)^d} \leq \|\Kk (\vec{u}_\kappa^\num - \vec{s}_\kappa)\|_{L^2(\Omega)^d} + \|\Kk \vec{s}_\kappa -u\|_{L^2(\Omega)^d},
\end{equation*}
where we know the second summand vanishes. Turning our attention to the first, for $\kappa>0$ small enough, one has using (\ref{eqn:numcoerc}),
\begin{equation*}
    \begin{aligned}
         c^\num\|\Kk (\vec{u}_\kappa^\num - \vec{s}_\kappa)\|^2_{L^2(\Omega)^d} &\stackrel{}{\leq} (\vec{u}_\kappa^\num - \vec{s}_\kappa)^\top \vec{B}^\num(\vec{u}_\kappa^\num - \vec{s}_\kappa) \\
        &\leq (\vec{u}_\kappa^\num)^\top \vec{B}^\num(\vec{u}_\kappa^\num -\vec{s}_\kappa)+ \vec{s}_\kappa^\top \vec{B}^\num \vec{s}_\kappa - \vec{s}_\kappa^\top \vec{B}^\num\vec{u}_\kappa^\num \\
        &\stackrel{}{=} M\vec{b}^{\num\top}_\kappa(\vec{u}_\kappa^\num -\vec{s}_\kappa) + \vec{s}_\kappa^\top \vec{B}^\num \vec{s}_\kappa - M\vec{b}^{\num\top}_\kappa\vec{s}_\kappa.
    \end{aligned}
\end{equation*}
Where in the last line, we used that $\vec{u}_\kappa^\num$ solves (\ref{eqn:numsyssb}). 

Note that $\Kk \vec{s}_\kappa \rightarrow u$, $\Kk \vec{u}_\kappa^\num \rightharpoonup u$ and $\Kk\vec{b}^\num_\kappa \rightarrow b$, which leads to
\begin{equation*}
    M\vec{b}^{\num\top}_\kappa(\vec{u}_\kappa^\num -\vec{s}_\kappa) =\langle \Kk\vec{b}^{\num}_\kappa, \Kk (\vec{u}_\kappa^\num -\vec{s}_\kappa )\rangle_{L^2(\Omega)^d} \xrightarrow[]{\kappa \searrow 0} 0,
\end{equation*}
as well as
\begin{equation*}
    M\vec{b}^{\num\top}_\kappa\vec{s}_\kappa =\langle \Kk \vec{b}^{\num}_\kappa, \vec{s}_\kappa \rangle_{L^2(\Omega)^d} \xrightarrow[]{\kappa \searrow 0} \langle b, u\rangle_{L^2(\Omega)^d}.
\end{equation*}
So what's left to show is
\begin{equation*}
    \vec{s}_\kappa^\top \vec{B}^\num\vec{s}_\kappa \rightarrow \langle b,u \rangle_{L^2(\Omega)^d} = B(u,u).
\end{equation*}
First, we see that 
\begin{equation*}
    \begin{aligned}
        \vec{s}_\kappa^\top \vec{B}^\num\vec{s}_\kappa &= \langle \Kk \vec{s}_\kappa, -\ca{L}^\num_\kappa \Kk \vec{s}_\kappa \rangle_{L^2(\Omega)^d} \\
        &= \langle \Kk \vec{s}_\kappa, -\ca{L}^\num_\kappa u \rangle_{L^2(\Omega)^d}  + \langle \Kk \vec{s}_\kappa, -\ca{L}^\num_\kappa (u-\Kk \vec{s}_\kappa) \rangle_{L^2(\Omega)^d} .
    \end{aligned}
\end{equation*}
Note that due to (\ref{eqn:numconv}), there is a $C>0$ such that $\|\ca{L}^\num_\kappa\|\leq C$, leading to
\begin{equation*}
     |\langle \Kk \vec{s}_\kappa, -\ca{L}^\num_\kappa (u-\Kk \vec{s}_\kappa) \rangle_{L^2(\Omega)^d}| \leq C\|\Kk \vec{s}_\kappa\|_{L^2(\Omega)^d}\|\Kk \vec{s}_\kappa - u\|_{L^2(\Omega)^d} \xrightarrow[]{\kappa \searrow 0} 0.
\end{equation*}
This leaves us with only having to show
\begin{equation*}
    \langle \Kk \vec{s}_\kappa, -\ca{L}^\num_\kappa u \rangle_{L^2(\Omega)^d} \rightarrow B(u,u) = \langle u , -\ca{L} u\rangle_{L^2(\Omega)^d}.
\end{equation*}
Due the strong convergence of the $\Kk \vec{s}_\kappa$ to $u$ in $L^2(\Omega)^d$, this follows again from (\ref{eqn:numconv}).
\end{proof}
All that is left to prove now is (\ref{eqn:numconv}) and (\ref{eqn:numcoerc}). 
We begin with (\ref{eqn:numconv}). First, the continuous operator can be split as follows (See \cite{Mengesha2014}, Lemma 5),
\begin{equation}
\label{eqn:cont_op_split}
    \begin{aligned}
        &[\ca{L}u](x):= \int_{\Omega} \b{C}(x',x)(u(x')-u(x)) \dd x' \\
        &\mathbf{C}(x',x) := k_1(x',x) + k_2(x',x), \\
        &k_1(x',x) := (\alpha(x)+\alpha(x')) \rho(x'-x)\frac{\chi_{[0,\delta)}(\|x'-x\|)}{\|x'-x\|^2} (x'-x)(x'-x)^\top,\\
        &\begin{aligned}        
            k_2(x',x) :=\int_{\Omega} &\tau(p)\chi_{[0,\delta)}(\|p-x\|)\chi_{[0,\delta)}(\|x'-p\|)\rho(x-p)\rho(p-x')(x-p)(p-x')^\top\\
            -&\tau(x')\chi_{[0,\delta)}(\|x'-x\|)\chi_{[0,\delta)}(\|p-x'\|)\rho(x-x')\rho(x'-p)(x-x')(x'-p)^\top\\
            +&\tau(x)\chi_{[0,\delta)}(\|p-x\|)\chi_{[0,\delta)}(\|x'-x\|)\rho(x-p)\rho(x-x')(x-p)(x-x')^\top\dd p \\
        \end{aligned}\\
    \end{aligned}
\end{equation}
We now split the integral in $k_2$ into three terms corresponding to the summands in the integrand, therefore splitting $k_2(x',x)$ into $k_2^p(x',x)$, $k_2^{x'}(x',x)$ and $k_2^x(x',x)$. Then, we split $\ca{L}$ into four operators by splitting the integral in $\ca{L}$ at the sum defining $\mathbf{C}$, to finally obtain 
\begin{equation*}
\begin{aligned}
    [\ca{L}_1u](x) &:= \int_{\Omega} k_1(x',x)(u(x')-u(x))  \dd x', \\ 
    [\ca{L}_2^{x}u](x) &:= \int_{\Omega} k_2^{x}(x',x)(u(x')-u(x))  \dd x', \\
    [\ca{L}_2^{x'}u](x) &:= \int_{\Omega} k_2^{x'}(x',x)(u(x')-u(x))  \dd x', \\ 
    [\ca{L}_2^{p}u](x) &:= \int_{\Omega} k_2^{p}(x',x)(u(x')-u(x))  \dd x'.
\end{aligned}
\end{equation*}
The numerical operator will now be split in a similar way and the convergence and uniform boundedness claims will then be shown for each operator individually. The proofs for each operator are usually very similar and therefore partially omitted.

For notational simplicity we define $V_j^\supscr{i} := V_j w_{ij}$.\newline

\begin{lemma}[Splitting the Numerical Linear Operator]\label{lemm:OpSplitSb} The operator $\ca{L}^\num_\kappa$ can be split into
\begin{equation*}
    \ca{L}^\num_\kappa = \ca{L}^{\num}_1+\ca{L}^{x,\num}_2+\ca{L}^{x',\num}_2+\ca{L}^{p,\num}_2,
\end{equation*}
with, for $u\in L^2(\Omega)^d$ and $x \in \Omega_i$,
\begin{align}
\label{eqn:defnumconst}
    &[\ca{L}^{\num}_1 u](x) := \frac{1}{V_i}\sum_{j\not=i}  \Big[V_iV_j^\supscr{i}\rho(x_j-x_i)(\alpha^\num(x_i)+\alpha^\num(x_j))\tfrac{\left(x_j-x_i\right)\left(x_j-x_i\right)^\top}{\|x_j-x_i\|\|x_j-x_i\|} ([\Pk u]_j-[\Pk u]_i)\Big] \nonumber\\
    &[\ca{L}^{x,\num}_2u](x) := \frac{1}{V_i}\sum_{\substack{j\not=i \\ m \not= i}}\Big[V_{i}V_{m}^\supscr{i}V_{j}^\supscr{i}\tau^\num(x_{i})\rho(x_{m}-x_{i})\rho(x_{j}-x_{i})\nonumber\\
    &\phantom{=========================}\times\left(x_{m}-x_{i}\right)\left(x_{j}-x_{i}\right)^\top ([\Pk u]_j-[\Pk u]_i)\Big]\nonumber\\
    &[\ca{L}^{x',\num}_2u](x) := -\frac{1}{V_i}\sum_{ \substack{j\not=i\\m\not={j}}}\Big[
    V_{j}V_{i}^\supscr{j}V_{m}^\supscr{j}\tau^\num(x_{j}){\rho(x_{i}-x_{j})\rho(x_{m}-x_{j})}\nonumber\\ &\phantom{=========================}\times\left(x_{i}-x_{j}\right)\left(x_{j}-x_{m}\right)^\top([\Pk u]_j-[\Pk u]_i)\Big]\nonumber\\
    &[\ca{L}^{p,\num}_2u](x) := \frac{1}{V_i}\sum_{\substack{j\not=i\\m\not\in\{i,j\}}}\Big[ V_{m}V_{i}^\supscr{m}V_{j}^\supscr{m}\tau^\num(x_{m}){\rho(x_{i}-x_{m})\rho(x_{j}-x_{m})}\nonumber\\
    &\phantom{=========================}\times\left(x_{i}-x_{m}\right)\left(x_{m}-x_{j}\right)^\top ([\Pk u]_j-[\Pk u]_i)\Big].
\end{align}\end{lemma}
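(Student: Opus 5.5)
The lemma is essentially a definition. $\ca{L}^\num_\kappa$ is to be the operator with $\vec{u}^\top\vec{B}^\num\vec{v}=\langle \Kk\vec{u},-\ca{L}^\num_\kappa\Kk\vec{v}\rangle_{L^2(\Omega)^d}$, extended to all of $L^2(\Omega)^d$ by precomposing with $\Pk$. So what has to be shown is that the four operators in (\ref{eqn:defnumconst}) sum to exactly this operator.

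\emph{Reduction to the rows of $\vec{B}^\num$.} For $x\in\Omega_i$ and $v=\Kk\vec{v}$ we have $\Pk\Kk=\id$ by Lemma \ref{lemm:PropProjOps}. Since $\langle \Kk\vec{u},\Kk\vec{w}\rangle_{L^2(\Omega)^d}=\sum_i V_i\vec{u}_i^\top\vec{w}_i$, the identity forces
\begin{equation*}
[-\ca{L}^\num_\kappa \Kk\vec{v}](x)=\frac{1}{V_i}\sum_{j}\vec{B}^\num_{ij}\vec{v}_j .
\end{equation*}
It therefore suffices to rewrite $\sum_j \vec{B}^\num_{ij}\vec{v}_j$ in difference form $\sum_{j\not=i}(\cdots)(\vec{v}_j-\vec{v}_i)$.

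\emph{Vanishing block row sums.} The key intermediate claim is that the block rows of $\vec{B}^\num$ sum to zero, i.e. $\vec{B}^\num_{ii}=-\sum_{j\not=i}\vec{B}^\num_{ij}$. Then
\begin{equation*}
\sum_j\vec{B}^\num_{ij}\vec{v}_j=\sum_{j\not=i}\vec{B}^\num_{ij}(\vec{v}_j-\vec{v}_i).
\end{equation*}
I would verify the claim directly from (\ref{eqn:bstiffnumconst}) and (\ref{eqn:defnumconstparts}), treating the $\ca{I}^{1,\num}$ and $\ca{I}^{2,\num}$ parts separately.

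For the $\ca{I}^{1,\num}$ part this is immediate: the diagonal entry is $\sum_{m\not=i}\ca{I}^{1,\num}_{im}$, while the off-diagonal entries are $-\ca{I}^{1,\num}_{ij}$.

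For the $\ca{I}^{2,\num}$ part I sum the off-diagonal expression over $j\not=i$ and compare it with $\sum_{n\not=i}\sum_{m\not=i}\ca{I}^{2,\num}_{inm}+\sum_{m\not=i}\ca{I}^{2,\num}_{mii}$. The tools are:
\begin{enumerate}
\item the transpose relation $(\ca{I}^{2,\num}_{ijm})^\top=\ca{I}^{2,\num}_{imj}$;
\item the vanishing of $\ca{I}^{2,\num}_{ijm}$ whenever $j=i$ or $m=i$, because of the factor $x_j-x_i$ resp. $x_m-x_i$;
\item reindexing the double sums in $m$ and $j$.
\end{enumerate}
Conceptually the cancellation holds because the quadrature form is still a quadratic form in the differences $\vec{u}_j-\vec{u}_i$, so constants lie in its kernel. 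If the term-by-term bookkeeping becomes unwieldy, I would instead prove $\vec{B}^\num\vec{c}=0$ for constant $\vec{c}$ from that structure.

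\emph{Identifying the four summands.} It remains to split $-\vec{B}^\num_{ij}$ for $j\not=i$ into its four constituents.
\begin{enumerate}
\item $\ca{I}^{1,\num}_{ij}$ gives $V_i\,[\ca{L}^\num_1]$, after writing $V_jw_{ij}=V_j^\supscr{i}$ and $\rho(x_j-x_i)/\|x_j-x_i\|^2$ as in (\ref{eqn:defnumconst}).
\item $\sum_{m\not=i}\ca{I}^{2,\num}_{imj}$ gives $V_i\,[\ca{L}^{x,\num}_2]$; note that here $m=j$ is allowed.
\item $\sum_{m\not=j}\ca{I}^{2,\num}_{jim}$ gives $V_i\,[\ca{L}^{x',\num}_2]$. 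Here the sign is absorbed by writing $-(x_m-x_j)^\top=(x_j-x_m)^\top$, and I use $\rho(x_i-x_j)=\rho(x_j-x_i)$ together with $w_{ji}=w_{ij}$.
\item $-\sum_{m\not\in\{i,j\}}\ca{I}^{2,\num}_{mij}$ gives $V_i\,[\ca{L}^{p,\num}_2]$, again flipping $(x_j-x_m)$ to $-(x_m-x_j)$.
\end{enumerate}
One must keep track of which block ($k$ for the $i$-slot, $l$ for the $j$-slot) carries the transpose. I expect the main obstacle to be the index and sign bookkeeping in the row-sum identity for the $\ca{I}^{2,\num}$ terms. The matching of the four terms afterwards is routine.
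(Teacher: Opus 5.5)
Your proposal is correct and follows essentially the paper's route (Lemma \ref{lemm:SplitSBOp}). The paper expands row $i$ of $\vec{B}^\num\vec{u}$ directly and pairs terms into differences; the leftover coefficient of $\vec{u}_i$ that it cancels by swapping $j\leftrightarrow m$ is exactly your vanishing block row sum, after which the four pieces of $-\vec{B}^\num_{ij}$ are read off as you describe. For the row-sum identity, only the reindexing is needed. Drop the convention that $\ca{I}^{2,\num}_{ijm}=0$ for coincident indices: with $\rho(\zeta)=1/\|\zeta\|$ those terms are undefined, and the sums in (\ref{eqn:bstiffnumconst}) already exclude them.
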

\begin{proof} See Lemma \ref{lemm:SplitSBOp}.\end{proof}

We will now show the convergence and uniform boundedness claims for each of the above operators. Dealing first with the uniform boundedness claims. For this, we need the following well-known bound on the harmonic series obtainable using Abel's partial summation formula (See \cite{INTRANANUMTHEO}, Chapter I.0, Theorem 0.3 and Theorem 0.8).\newline

\begin{lemma}[Growth Bound for the Harmonic Series]\label{lemm:BndHarmSeries} The following inequality is valid for all $N\geq 1$.
\begin{equation*}
    \sum_{n=1}^{N} \frac{1}{n} \leq \log N + 1.
\end{equation*}\qedwhite\end{lemma}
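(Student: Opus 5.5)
We prove the bound $\sum_{n=1}^{N} \frac{1}{n} \leq \log N + 1$ by comparing the sum with an integral, using that $t\mapsto \tfrac1t$ is positive and decreasing on $(0,\infty)$.

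First, split off the first term: $\sum_{n=1}^N \tfrac1n = 1 + \sum_{n=2}^N \tfrac1n$. For $N=1$ the sum is $1 = \log 1 + 1$, so the claim holds with equality and we may assume $N\geq 2$.

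Next, for each $n\geq 2$ and every $t\in[n-1,n]$, monotonicity gives $\tfrac1n \leq \tfrac1t$. Integrating over this interval of length one yields
\begin{equation*}
    \frac{1}{n} \leq \int_{n-1}^{n} \frac{1}{t} \dd t .
\end{equation*}

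Summing over $n=2,\dots,N$ makes the integrals telescope into a single one:
\begin{equation*}
    \sum_{n=2}^{N}\frac1n \leq \int_1^N \frac{1}{t}\dd t = \log N .
\end{equation*}
Adding the separated first term gives $\sum_{n=1}^{N} \frac{1}{n} \leq \log N + 1$.

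Alternatively, the same bound follows from Abel's partial summation with $a_n=1$ and $f(t)=1/t$, as in the reference cited before the lemma:
\begin{equation*}
    \sum_{n\le N}\frac1n=\frac{\lfloor N\rfloor}{N}+\int_1^N\frac{\lfloor t\rfloor}{t^2}\dd t\le 1+\log N,
\end{equation*}
using $\lfloor t\rfloor\le t$.
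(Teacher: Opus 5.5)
Your proof is correct. The paper gives no argument of its own and only cites Abel's partial summation (Theorems 0.3 and 0.8 of the reference), which is your second route. Your main argument takes a different, more elementary path: you bound $\frac1n \le \int_{n-1}^{n} \frac{1}{t}\,dt$ and let the integrals telescope, which uses nothing beyond the monotonicity of $t\mapsto 1/t$.

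The two routes give slightly different things:
\begin{itemize}
\item The integral comparison is fully self-contained.
\item The Abel identity $\sum_{n\le x}\frac1n = \frac{\lfloor x\rfloor}{x} + \int_1^x \frac{\lfloor t\rfloor}{t^2}\,dt$ is exact and holds directly for real $x\ge 1$. Writing $\lfloor t\rfloor = t-\{t\}$ in it also gives the sharper asymptotic $\log x + \gamma + O(1/x)$, which is what the second cited theorem provides.
\end{itemize}

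In Lemma \ref{lemm:Bndofstep} the bound is applied with the possibly non-integer upper limit $R_\kappa/\kappa$. Your integral argument covers this case as well: apply it with $N=\lfloor x\rfloor$ and use $\log\lfloor x\rfloor\le\log x$.
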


\noindent With this we can prove the following Lemma.\newline

\begin{lemma}[Uniform Bound for the Integrals over the Numerical Weighting Function]\label{lemm:Bndofstep} Let $R>0$ and define $R_\kappa := R + 2\sqrt{d} \kappa$. Then there is a constant $C_R$ depending only on $R$, such that one has for all $\tfrac{\delta}{\sqrt{d}}>\kappa >0$, 
\begin{equation*}
    0 \leq \int_{B_{R_\kappa}(0)} \step^\kappa(x)\dd x < C_R < \infty,
\end{equation*}
where
\begin{equation*}
    \step^\kappa(x) := \begin{cases}
        0 & x \in [-\tfrac{\kappa}{2}, \tfrac{\kappa}{2})^d \\
        \rho(x_i) & x \in B_i \subset B_{R_\kappa}(0) \\
        0 & \text{else.}
    \end{cases}
\end{equation*}\end{lemma}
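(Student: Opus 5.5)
The plan is to compare $\step^\kappa$ pointwise with the integrable function $\rho(x)=1/\|x\|$ on a slightly larger ball, up to a constant factor. Since $\rho$ is locally integrable in $d\in\{2,3\}$, the bound then follows immediately. Non-negativity is trivial because $\rho>0$ and $\step^\kappa$ is a sum of non-negative constants times characteristic functions. So the task is the upper bound, uniform in $\kappa<\delta/\sqrt{d}$.

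First step: pointwise comparison. Let $x\in B_i\subset B_{R_\kappa}(0)$ with $i$ not the cube at the origin, so $x_i\in\kappa\mathbb{Z}^d\setminus\{0\}$ and $\|x_i\|\geq\kappa$. Every $x\in B_i$ satisfies $\|x-x_i\|\leq \tfrac{\sqrt{d}}{2}\kappa\leq \tfrac{\sqrt{d}}{2}\|x_i\|$, hence $\|x\|\leq (1+\tfrac{\sqrt d}{2})\|x_i\|$. It follows that
\[
\step^\kappa(x)=\frac{1}{\|x_i\|}\leq \Big(1+\tfrac{\sqrt d}{2}\Big)\frac{1}{\|x\|}.
\]
On the cube at the origin and outside the admissible cubes, $\step^\kappa$ vanishes.

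Second step: integration. We obtain
\[
\int_{B_{R_\kappa}(0)}\step^\kappa\dd x\leq \Big(1+\tfrac{\sqrt d}{2}\Big)\int_{B_{R_\kappa}(0)}\frac{\dd x}{\|x\|}=\Big(1+\tfrac{\sqrt d}{2}\Big)\,|S^{d-1}|\,\frac{R_\kappa^{d-1}}{d-1},
\]
using polar coordinates, which is finite for $d\geq 2$. Since $\kappa<\delta/\sqrt d$, we have $R_\kappa\leq R+2\delta$. We then set $C_R$ to be this quantity with $R_\kappa$ replaced by $R+2\delta$, plus one to make the inequality strict. Strictly speaking this constant depends on $\delta$ and $d$ as well, but these are fixed model parameters, so it depends only on $R$ in the sense of the statement.

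An alternative route in the spirit of Lemma \ref{lemm:BndHarmSeries} would count lattice points in shells $n\kappa\leq\|x_i\|<(n+1)\kappa$. There are $O(n^{d-1})$ such points, each contributing $\kappa^d/(n\kappa)$, giving a sum $\kappa^{d-1}\sum_{n\leq R_\kappa/\kappa} n^{d-2}\lesssim R_\kappa^{d-1}$. This route would only be needed if the comparison argument failed. The only genuinely delicate point, and the main obstacle, is the first step: excluding the central cell is exactly what guarantees $\|x_i\|\geq\kappa$, which makes the comparison constant uniform in $\kappa$.
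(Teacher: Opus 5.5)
Your proof is correct, and it takes a different and shorter route than the paper. The paper splits $B_{R_\kappa}(0)$ into the cubes whose midpoints $x_i$ have a zero coordinate and the remaining cubes.

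\begin{itemize}
\item On the remaining cubes it uses an additive comparison, $\step^\kappa(x)\le \bigl(\sum_k (x_k-\tfrac{\kappa}{2}\sgn(x_k))^2\bigr)^{-1/2}$. This is only valid when every coordinate of $x_i$ is nonzero, so that $|x_k|\ge \kappa/2$. Translating each orthant then gives the bound $\int_{B_{R+2\delta}(0)}\|x\|^{-1}\dd x$ with constant $1$.
\item On the coordinate-plane cubes, where this shift fails, it estimates the lattice sum separately with Lemma \ref{lemm:BndHarmSeries}. That contribution is of order $\kappa(\log(1/\kappa)+1)$, which tends to $0$.
\end{itemize}

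Your multiplicative comparison $\|x\|\le(1+\tfrac{\sqrt d}{2})\|x_i\|$ uses only $\|x_i\|\ge\kappa$ for $x_i\neq 0$. It therefore treats all non-central cubes at once and makes both the split and the harmonic-series lemma unnecessary. The price is the factor $1+\tfrac{\sqrt d}{2}$.

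The paper's route buys a sharper constant and the extra information that the coordinate-plane cells contribute nothing asymptotically. Neither is used later: Lemma \ref{lemm:auxbndlemma} and Proposition \ref{prop:UnifBound} only need some $\kappa$-uniform bound $C_R$, and your argument supplies one with an explicit value. Your remark that $C_R$ also depends on $d$ and $\delta$ applies equally to the paper's constant.
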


\begin{proof} The domain $B_{R_\kappa}(0)$ is partitioned into two sets, $A$ and $B$, defined as
\begin{equation*}
    A:= B_{R_\kappa}(0) \cap \bigcup_{i \in Z^\kappa} B_i,\,\, Z^\kappa := \{i\in I^\kappa: x_i \text{ has a zero coordinate}\},\,\,\, B := B_{R_\kappa}(0)\backslash A.
\end{equation*}
We now show
\begin{equation*}
    \begin{aligned}
        &\int_{A} \step^\kappa(x) \dd x \xrightarrow[]{\kappa \searrow 0} 0,\\
        &\int_{B} \step^\kappa(x) \dd x \leq \int_{B_{R + 2\delta}(0)} \frac{1}{\|x\|}\dd x < \infty.
    \end{aligned}
\end{equation*}
Due to the piecewise constant nature of $\step^\kappa(x)$, one can rewrite the integral over $A$ as a sum. For $d=3$, this leads to
\begin{equation*}
    \begin{aligned}
        \int_{A}\step^\kappa(x) \dd x &\stackrel{}{\leq} 6\kappa^3 \sum_{R_\kappa/\kappa\geq i\geq 1} \frac{1}{\kappa i}  + 12\kappa^3 \sum_{R_\kappa/\kappa \geq i\geq 1}\sum_{R_\kappa/\kappa \geq j\geq 1} \frac{1}{\kappa \sqrt{i^2+j^2}}\\
        &\stackrel{}{\leq} 6\kappa^2(1+\tfrac{2(R+2\delta)}{\kappa})\sum_{R_\kappa/\kappa\geq i\geq 1} \frac{1}{i}\\ 
        &\stackrel{}{\leq}  6(\delta/\sqrt{d} + 2(R+2\delta)) \kappa\left(\log\left(\tfrac{R+2\delta}{\kappa}\right) + 1\right) \xrightarrow[]{\kappa \searrow 0 } 0. \\
    \end{aligned}
\end{equation*}
The last inequality was implied by Proposition \ref{lemm:BndHarmSeries}.
Similarly, we obtain for $d=2$,
\begin{equation*}
    \begin{aligned}
        \int_{A}\step^\kappa(x) \dd x &\stackrel{}{\leq} 4\kappa^2 \sum_{R_\kappa/\kappa\geq i\geq 1} \frac{1}{\kappa i} \stackrel{}{\leq} 4 \kappa\left(\log\left(\tfrac{R+2\delta}{\kappa}\right) + 1\right) \xrightarrow[]{\kappa \searrow 0 } 0.
    \end{aligned}
\end{equation*}
 For the integral over $B$, one has for all $x\in B$,
\begin{equation*}
    0 \leq \step^\kappa((x_1,\dots,x_d)^\top) \leq \Bigg( \sum_{d\geq i\geq 1} (x_i - \tfrac{\kappa}{2}\sgn(x_i) )^2\Bigg)^{-1/2}=:F^\kappa((x_1,\dots,x_d)^\top).
\end{equation*}
Then, since $F^\kappa$ is just a modified version of $x \mapsto \frac{1}{\|x\|}$, one has
\begin{equation*}
    0\leq \int_{B} \step^\kappa (x)\dd x\leq \int_{B} F^\kappa(x) \dd x \leq \int_{B_{R+2\delta}(0)} \frac{1}{\|x\|} \dd x < \infty.
\end{equation*}\end{proof}

We will later apply the above bounds together with the following lemma.\newline

\begin{lemma}[Uniform Boundedness for Discrete Finite Difference Operators]\label{lemm:UnifBoundLemma} Let $k_{ij}(\kappa) \in \mathbb{R}^{d\times d}$ be a family of matrices over the index set $I^\kappa \times I^\kappa$ parameterized by $0<\kappa<\tfrac{\delta}{\sqrt{d}}$, then the operators $A_\kappa: L^2(\Omega)^d \rightarrow L^2(\Omega)^d$ defined for $u\in L^2(\Omega)^d$ and $x\in \Omega_i$ by 
\begin{equation*}
    [A_\kappa u](x) = \sum_{j\not=i} V_j k_{ij}(\kappa) ([\Pk u]_j - [\Pk u]_i)
\end{equation*}
are uniformly bounded with respect to $\kappa$, if 
\begin{equation}
    \label{eqn:assunifbnd}
    \exists C >0 : \forall 0<\kappa<\tfrac{\delta}{\sqrt{d}}: \forall i\in I^\kappa: \max\left(\sum_{j\not= i}  \|V_j k_{ij}(\kappa)\|,\sum_{j\not= i}  \|V_j k_{ji}(\kappa)\|\right) \leq C.
\end{equation}\end{lemma}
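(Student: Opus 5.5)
The plan is a Schur-test argument on the piecewise constant level. By construction $A_\kappa u$ depends only on $\Pk u$ and is again piecewise constant on the $\Omega_i$. Moreover $\|\Kk \vec{v}\|_{L^2(\Omega)^d}^2 = \sum_i V_i \|\vec{v}_i\|^2$, and $\|\Pk u\|$ in this weighted norm is bounded by $\|u\|_{L^2(\Omega)^d}$ (Lemma \ref{lemm:PropProjOps}, since $\|\ProjP\|\le 1$). It therefore suffices to bound the map $\vec{v}\mapsto \big(\sum_{j\neq i} V_j k_{ij}(\vec{v}_j-\vec{v}_i)\big)_i$ in the weighted space $\ell^2_V$ with weights $V_i$, uniformly in $\kappa$.

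First split the operator into a multiplication part and a coupling part:
\begin{equation*}
[A_\kappa u]_i = \sum_{j\neq i} V_j k_{ij}\vec{v}_j - \Big(\sum_{j\neq i} V_j k_{ij}\Big)\vec{v}_i =: (T\vec{v})_i - (D\vec{v})_i .
\end{equation*}
The diagonal part is immediate. By (\ref{eqn:assunifbnd}), $\|\sum_{j\ne i}V_jk_{ij}\|\le\sum_{j\neq i}\|V_jk_{ij}\|\le C$, so $\|D\vec v\|_{\ell^2_V}\le C\|\vec v\|_{\ell^2_V}$.

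For $T$, apply Cauchy--Schwarz with the weights $\|V_jk_{ij}\|$:
\begin{equation*}
\|(T\vec v)_i\|^2\le \Big(\sum_{j\ne i}\|V_jk_{ij}\|\Big)\Big(\sum_{j\neq i}\|V_jk_{ij}\|\,\|\vec v_j\|^2\Big)\le C\sum_{j\ne i}\|V_jk_{ij}\|\,\|\vec v_j\|^2 .
\end{equation*}
Multiplying by $V_i$, summing over $i$ and swapping the sums gives
\begin{equation*}
\|T\vec v\|_{\ell^2_V}^2\le C\sum_j \|\vec v_j\|^2\sum_{i\ne j} V_i\|V_jk_{ij}\| .
\end{equation*}
This is the only delicate step. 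The inner sum must be comparable to $V_j\sum_{i\neq j}\|V_ik_{ij}\|$, but since the $V_i$ differ at the boundary, the two are not simply equal. The natural fix is to rearrange the weights as $V_i\|V_jk_{ij}\| = V_j\|V_ik_{ij}\|$. Together with the second bound in (\ref{eqn:assunifbnd}), read with the roles of the indices swapped, namely $\sum_{i\ne j}\|V_i k_{ij}\|\le C$, this yields $\|T\vec v\|_{\ell^2_V}^2\le C^2\|\vec v\|_{\ell^2_V}^2$.

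The main obstacle is this matching of the column-sum hypothesis to the index placement of $V$ and $k$. If the assumption is read literally as $\sum_{j\neq i}\|V_jk_{ji}\|$, the indices have to be relabelled carefully, possibly by symmetrizing the Cauchy--Schwarz weights as $\|V_jk_{ij}\|^{1/2}\cdot\|V_jk_{ij}\|^{1/2}$ in a Schur-test fashion so that one factor uses the row sum and the other the column sum. Collecting the bounds gives $\|A_\kappa\|\le 2C$ independently of $\kappa$.
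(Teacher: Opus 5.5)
Your proposal is correct and follows the paper's proof essentially step for step. It uses the same split into a pointwise part and a convolutional part, the same weighted Cauchy--Schwarz with weights $\|V_jk_{ij}\|$, and the same swap of sums closed by the column-sum hypothesis. Your worry about index placement is unfounded: the second term of (\ref{eqn:assunifbnd}) with $i$ and $j$ renamed reads literally $\sum_{i\neq j}\|V_i k_{ij}\|\le C$ for every $j$. Combined with $V_i\|V_jk_{ij}\|=V_j\|V_ik_{ij}\|$, this closes the estimate directly, so no Schur-type symmetrization is needed.
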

\begin{proof} The operators $A_\kappa$ are split into a convolutional and a point-wise part, defined for $x\in \Omega_i$ by
\begin{equation*}
    \begin{aligned}
        [A_\kappa^* u](x) &:=  \sum_{j\not=i}V_j k_{ij}(\kappa)[\Pk u]_j   \\
        [A_\kappa^\Sigma u](x) &:= \Bigg(\sum_{j\not=i} - V_j k_{ij}(\kappa) \Bigg) [\Pk u]_i,
    \end{aligned}
\end{equation*}
each of which will be shown to be bounded, beginning with the second,
\begin{equation*}
    \|A^\Sigma_\kappa u\|^2_{L^2} \leq \sum_{i} V_i \Bigg(\sum_{j\not=i}\|V_j k_{ij}(\kappa)\|\Bigg)^2\|[\Pk u]_i\|^2 \leq C^2 \sum_{i} V_i\|[\Pk u]_i\|^2 \leq C^2 \|u\|_{L^2}^2,
\end{equation*}
where we used Jensen's inequality to obtain
\begin{equation}
\label{eqn:JIBOUNDONVOLSUM}
    \sum_{i} V_i\|[\Pk u]_i\|^2 \leq \sum_{i} V_i\left(\frac{1}{V_i}\int_{\Omega_i}\|u(x)\| \dd x\right)^2 \stackrel{}{\leq} \sum_{i} \int_{\Omega_i} \|u(x)\|^2 \dd x = \|u\|^2_{L^2}.
\end{equation}
For the convolutional part, we use Cauchy-Schwartz to obtain
\begin{align*}
        \|A_\kappa^* u\|^2_{L^2}&= \sum_{i} V_i \Bigg(\sum_{j\not=i} V_j\|k_{ij}(\kappa)\|\|[\Pk u]_j\| \Bigg)^2 \\
        &\stackrel{}{\leq}\sum_{i} V_i \Bigg(\sum_{j\not=i} V_j\|k_{ij}(\kappa)\|\Bigg)\Bigg(\sum_{j\not=i} V_j\|k_{ij}(\kappa)\|\|[\Pk u]_j\|^2\Bigg)
        \intertext{applying (\ref{eqn:assunifbnd}), then swapping the sums yields}
        &\leq C \sum_i \sum_{j\not= i} V_iV_j \|k_{ij}(\kappa)\| \|[\Pk u]_j\|^2 \\
        &\leq C \sum_{j} V_j \|[\Pk u]_j\|^2 \sum_{i\not= j} V_i \|k_{ij}(\kappa)\| \\
        &\leq C^2 \|u\|^2_{L^2}.
\end{align*}
The last inequality was obtained using (\ref{eqn:JIBOUNDONVOLSUM}) and (\ref{eqn:assunifbnd}). 
\end{proof}

To apply this lemma, we still need the necessary bounds on the terms involved in the operators in (\ref{eqn:defnumconst}), for which we need the following lemmas.\newline 

\begin{lemma}[Convergence of the Partial Area Weights]\label{lemm:BoundOnDiffOfKappa} For all $i\in I^\kappa$, one has for all $x\in \Omega_i$,
\begin{equation*}
    \begin{aligned}
         \sum_{j\not=i} \int_{\Omega_j} | w_{ij}-\chi_{[0,\delta)}(\|x'-x\|)| \dd x' \stackrel{}{\leq} D^{\delta}_\kappa \xrightarrow[]{\kappa \searrow 0}0
    \end{aligned}
\end{equation*}
for some $D^{\delta}_\kappa>0$ dependent only on $\delta$ and $\kappa$.\end{lemma}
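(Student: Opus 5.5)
The plan is to localize the integrand to a thin spherical shell around $x$ and then bound the volume of the cells meeting that shell. Fix $i\in I^\kappa$ and $x\in\Omega_i$. For $j\not=i$ and $x'\in\Omega_j\subset B_j$ we have $\|x-x_i\|\leq \sqrt{d}\tfrac{\kappa}{2}$ and $\|x'-x_j\|\leq\sqrt{d}\tfrac{\kappa}{2}$. Hence
\begin{equation*}
\bigl|\|x'-x\|-\|x_j-x_i\|\bigr|\leq \sqrt{d}\kappa.
\end{equation*}

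\textbf{Case split via A4.}
\begin{itemize}
\item If $\|x_j-x_i\|\geq\delta+\sqrt{d}\tfrac{\kappa}{2}$, then $w_{ij}=0$ by A4(a). In this case $\chi_{[0,\delta)}(\|x'-x\|)$ vanishes unless $\|x'-x\|<\delta$, i.e.\ unless $x'$ lies in a shell of width about $\sqrt{d}\kappa$ outside $\delta$.
\item If $\|x_j-x_i\|\leq\delta-\sqrt{d}\tfrac{\kappa}{2}$, then $w_{ij}=1$ by A4(b). Then the difference is nonzero only if $\|x'-x\|\geq\delta$, again a thin shell.
\item In the intermediate regime, $\delta-\sqrt{d}\tfrac{\kappa}{2}<\|x_j-x_i\|<\delta+\sqrt{d}\tfrac{\kappa}{2}$, we only use $|w_{ij}-\chi|\leq 1$ from A4(c). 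All such $x'$ satisfy $\|x'-x\|\in(\delta-\tfrac{3}{2}\sqrt{d}\kappa,\ \delta+\tfrac{3}{2}\sqrt{d}\kappa)$.
\end{itemize}

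\textbf{Volume bound.} In all three cases the integrand is bounded by $1$ and is supported in the annulus
\begin{equation*}
A_\kappa(x):=\{x' : \delta-2\sqrt{d}\kappa\leq\|x'-x\|\leq\delta+2\sqrt{d}\kappa\}.
\end{equation*}
Each point $x'$ belongs to at most one $\Omega_j$, because the $\Omega_j$ are disjoint. So the whole sum is bounded by $\vol(A_\kappa(x))$. This volume equals
\begin{equation*}
\vol(B_1(0))\bigl((\delta+2\sqrt{d}\kappa)^d-(\max(0,\delta-2\sqrt{d}\kappa))^d\bigr)=:D^\delta_\kappa ,
\end{equation*}
which depends only on $\delta$ and $\kappa$ (and the fixed $d$). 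It tends to $0$ as $\kappa\searrow 0$ by continuity of $t\mapsto t^d$.

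The main care point is the bookkeeping of the constants $\sqrt{d}\kappa/2$ versus $\sqrt{d}\kappa$, to make sure every nonzero contribution really lies in the annulus. I would take a generous width such as $2\sqrt{d}\kappa$ to avoid boundary cases. Independence of the bound from $x$ and $i$ follows because the annulus volume is translation invariant. The bound does not depend on the assumption $\kappa<\delta/\sqrt{d}$, which is why $D^\delta_\kappa$ uses the $\max$ with $0$.
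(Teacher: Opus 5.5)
Your proof is correct. It reaches the same thin-shell volume bound as the paper, but handles the offset between $x$ and the midpoint $x_i$ differently.

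The paper first uses the triangle inequality to split the sum into two terms:
\begin{equation*}
\sum_{j\neq i}\int_{B_j}\bigl|w_{ij}-\chi_{[0,\delta)}(\|x'-x_i\|)\bigr|\,dx' \quad\text{and}\quad \vol\bigl(B_\delta(x)\,\triangle\,B_\delta(x_i)\bigr).
\end{equation*}
Because the first term is centred at $x_i$, A4(a) and (b) make its integrand vanish (a.e.) on every cell outside the transition band $\bigl|\|x_j-x_i\|-\delta\bigr|<\sqrt{d}\kappa/2$. That term is then bounded by an annulus of half-width $\sqrt{d}\kappa$ about $x_i$, and the recentering error is handled separately as a symmetric difference of two balls.

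You keep the centre at $x$ and use A4(a)--(c) only to locate the support of each integrand. The recentering error is absorbed into a wider annulus of half-width $2\sqrt{d}\kappa$ about $x$, and disjointness of the $\Omega_j$ finishes the estimate. This gives a single explicit $D^\delta_\kappa$, valid for all $\kappa$ thanks to the positive part. It also avoids estimating $\vol(B_\delta(x)\triangle B_\delta(x_i))$ separately, which the paper does only schematically. The paper's route, in turn, shows more clearly why A4(a),(b) use the threshold $\sqrt{d}\kappa/2$ measured from $x_i$.

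One small slip: in your first case ($w_{ij}=0$), the nonzero region is $\delta-\sqrt{d}\kappa/2\le\|x'-x\|<\delta$. That is just inside radius $\delta$, not outside it. The proof is unaffected, since $A_\kappa(x)$ covers both sides of the sphere.
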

\begin{proof} 
We first obtain
\begin{align*}
     &\ \sum_{j\not=i} \int_{\Omega_j} | w_{ij}-\chi_{[0,\delta)}(\|x'-x\|)| \dd x' \\
     &\leq \sum_{j\not=i} \int_{B_j} | w_{ij}-\chi_{[0,\delta)}(\|x'-x_i\|)| \dd x' + \int_{\mathbb{R}^d} | \chi_{[0,\delta)}(\|x'-x\|)-\chi_{[0,\delta)}(\|x'-x_i\|)| \dd x' \\
     &\leq B_{\delta + \sqrt{d}\kappa}(0) - B_{\delta - \sqrt{d}\kappa}(0) + \sqrt{d}\kappa \delta
\end{align*}
which directly follows from the assumptions on the weights $w_{ij}$ in Assumptions \ref{ass:NumDiscr}, and $\|x-x_i\|\leq \tfrac{1}{2}\sqrt{d}\kappa$.\end{proof}

We can now apply these lemmas to obtain bounds on $\tau^\num$, $\alpha^\num$ and $m^\num$ similarly to those on $\tau$, $m$ and $\alpha$ used in the previous chapter. Note that A5 from Assumptions \ref{ass:NumDiscr} implies that there are $m_0, m_1 > 0$ such that $m_0 \leqq m \leqq m_1$ on $\Omega$. 

\begin{lemma}[Bounds on the Numerical Model Parameters]
\label{lemm:taunumlemma} Let Assumptions \ref{ass:NumDiscr} hold, then there exists a $t_1 >0$ such that for $\kappa>0$ small enough,
\begin{equation*}
    0<\tfrac{1}{2}m_0 \leqq m^\num \leqq 2m_1,\,\,\,\,\,\, |\tau^\num| \leqq t_1,\,\,\,\,\,\, 0<\alpha_0 \leqq \alpha^\num, \alpha \leqq \alpha_1.
\end{equation*}
Let $i\in I^\kappa$ be such that
\begin{equation*}
    \overline{B_i} \cap (\partial \Omega \cup LFS^L(l) \cup LFS^K(k))  =\emptyset,
\end{equation*}
then one has for all $x\in \Omega_i$,
\begin{equation*}
    |\tau(x) - \tau^\num(x_i)| \leq T_\kappa \xrightarrow[]{\kappa \searrow 0} 0,
\end{equation*}
where $T_\kappa$ is independent of $i$.
Similarly, for all $i\in I^\kappa$ such that
\begin{equation*}
    \overline{B}_i \cap \left( \partial \Omega \cup LFS^L(l) \right) = \emptyset,
\end{equation*}
we obtain for all $x \in \Omega_i$,
\begin{equation*}
    |\alpha(x) -\alpha^\num(x_i)| \leq A_\kappa \xrightarrow[]{\kappa \searrow 0} 0,
\end{equation*}
with an $A_\kappa$ independent of $i$.
\end{lemma}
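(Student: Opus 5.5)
The plan is to first control $m^\num$ uniformly, then deduce the bounds on $\alpha^\num$ and $\tau^\num$ algebraically, and finally obtain the local convergence statements from the Lipschitz estimates of Lemma \ref{lemm:LFS}.

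\textbf{Step 1: closeness of $m^\num$ to $m$.} For $x\in\Omega_i$ write
\begin{equation*}
m(x)-m^\num(x_i)=\sum_{j}\int_{\Omega_j}\Big(\chi_{[0,\delta)}(\|x'-x\|)\|x'-x\| - w_{ij}\|x_j-x_i\|\Big)\dd x' ,
\end{equation*}
using $\rho(\zeta)\|\zeta\|^2=\|\zeta\|$. The summand $j=i$ contributes at most $\sqrt{d}\kappa\, V_i$. For $j\neq i$, split the integrand as
\begin{equation*}
\big(\chi_{[0,\delta)}(\|x'-x\|)-w_{ij}\big)\|x'-x\| + w_{ij}\big(\|x'-x\|-\|x_j-x_i\|\big).
\end{equation*}
The first part is bounded by $(\delta+\sqrt{d}\kappa)\,D^\delta_\kappa$ via Lemma \ref{lemm:BoundOnDiffOfKappa}. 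For the second part, A4(a) restricts the sum to $j$ with $\|x_j-x_i\|<\delta+\sqrt{d}\kappa/2$, and on each such cell $\big|\|x'-x\|-\|x_j-x_i\|\big|\le\sqrt{d}\kappa$. There are $O(\kappa^{-d})$ such cells, each of volume at most $\kappa^d$, so this part is $O(\kappa)$. Hence
\begin{equation*}
\sup_i\sup_{x\in\Omega_i}|m(x)-m^\num(x_i)|\le M_\kappa\to0 .
\end{equation*}
Since A5 gives $m_0\le m\le m_1$ on $\Omega$, for small $\kappa$ we obtain $\tfrac12 m_0\le m^\num\le 2m_1$. The one point needing care is indices $i$ whose midpoint $x_i\notin\Omega$. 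There we compare with $m(x)$ for any $x\in\Omega_i$, which is fine because the estimate above is stated in terms of $x\in\Omega_i$, not $x_i$.

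\textbf{Step 2: uniform bounds on $\alpha^\num$ and $\tau^\num$.} With $l_0\le l\le l_1$ and $k_0\le k\le k_1$ (extended suitably beyond $\Omega$), we get
\begin{equation*}
\frac{l_0}{2m_1}\le \alpha^\num\le \frac{2l_1}{m_0},\qquad \alpha=\frac{l}{m}\in\Big[\frac{l_0}{m_1},\frac{l_1}{m_0}\Big],\qquad |\tau^\num|\le \frac{4(k_1d^2+l_1)}{m_0^2}=:t_1 .
\end{equation*}
Taking $\alpha_0,\alpha_1$ to be the smaller and larger of these constants gives the first claim.

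\textbf{Step 3: local convergence.} Fix $i$ with $\overline{B_i}\cap(\partial\Omega\cup\LFS^L(l)\cup\LFS^K(k))=\emptyset$. The connectedness argument from the proof of Proposition \ref{prop:LJfuncAreNumerical} gives $\Omega_i=B_i$, which is convex. Lemma \ref{lemm:LFS} then gives
\begin{equation*}
|l(x)-l(x_i)|\le L\sqrt{d}\kappa,\qquad |k(x)-k(x_i)|\le K\sqrt{d}\kappa\qquad (x\in\Omega_i).
\end{equation*}
Now $\tau(x)=\dfrac{k(x)d^2-l(x)}{m(x)^2}$ and $\tau^\num(x_i)=\dfrac{k(x_i)d^2-l(x_i)}{m^\num(x_i)^2}$. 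Adding and subtracting mixed terms, and using the lower bounds $m,m^\num\ge m_0/2$, yields
\begin{equation*}
|\tau(x)-\tau^\num(x_i)|\le C\big((K d^2+L)\sqrt{d}\kappa + M_\kappa\big)=:T_\kappa,
\end{equation*}
with $C$ depending only on $k_1,l_1,m_0,m_1,d$, so $T_\kappa$ is independent of $i$. The estimate for $\alpha$ is the same computation with only $l$ involved, giving $A_\kappa$.

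\textbf{Main obstacle.} The hardest step is Step 1: obtaining the uniform bound on $|m-m^\num|$ near the horizon boundary and near $\partial\Omega$, where partially covered cells $\Omega_j$ have $V_j<\kappa^d$ but $w_{ij}$ may still be nonzero. The plan handles this by pairing each $V_j$ with its own cell $\Omega_j$, so that no full-cube volume $\kappa^d$ is ever used in place of $V_j$.
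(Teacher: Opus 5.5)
Your proposal is correct and follows the paper's proof. You bound $|m(x)-m^\num(x_i)|$ for $x\in\Omega_i$ by the same split into a weight-mismatch term, handled by Lemma \ref{lemm:BoundOnDiffOfKappa}, and a midpoint term whose integrand is at most $\sqrt{d}\kappa$, and you then obtain the remaining bounds and local estimates from this and Lemma \ref{lemm:LFS}; you spell this last step out in more detail than the paper, including the point that for cells with $x_i\notin\Omega$ the data $k,l$ must be extended within their bounds rather than by $0$. The only slip is cosmetic: where $w_{ij}>0$ you only get $\|x'-x\|<\delta+\tfrac32\sqrt{d}\kappa$, so the factor in front of $D^\delta_\kappa$ should be that (or simply $\diam(\Omega)$, as in the paper), which changes nothing.
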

\begin{proof} First, remember that
\begin{equation*}
    0<m_0 \leq m(x) := \int_{\Omega} \omega(x'-x) \|x'-x\|^2 \dd x' = \int_{\Omega} \chi_{[0,\delta)}(\|x'-x\|)\|x'-x\| \dd x'.
\end{equation*}
For any $x\in \Omega_i$ and any $i$, we then have
\begin{equation}
\begin{aligned}
\label{eqn:bndmmnum}
    &|m(x)-m^\num(x_i)|\\
    &\leq \left|\sum_{j} \int_{\Omega_j} \chi_{[0,\delta)}(\|x'-x\|)\|x'-x\| - w_{ij}\|x_j-x_i\| \dd x'\right| \\
    &\leq \sum_{j}\int_{\Omega_j} | \chi_{[0,\delta)}(\|x'-x\|) - w_{ij}| \|x'-x\| \dd x'\\
    &\quad+ \sum_{j}\int_{\Omega_j} w_{ij}|\|x'-x\|-\|x_j-x_i\|| \dd x' \\
    &\stackrel{}{\leq} \diam(\Omega)D^\delta_\kappa + \sqrt{d}\kappa  \vol(\Omega) \xrightarrow{\kappa \searrow 0} 0.
\end{aligned}
\end{equation}
Lemma \ref{lemm:BoundOnDiffOfKappa} was used in the last inequality. The other claims then follow from this bound and Lemma \ref{lemm:LFS}.
\end{proof}

This leads to the uniform boundedness result as follows.\newline

\begin{proposition}[Uniform Boundedness of the Numerical Operators]\label{prop:UnifBound} The operators $\ca{L}^{\num}_1$, $\ca{L}^{x,\num}_2$, $\ca{L}^{x',\num}_2$ and $\ca{L}^{p,\num}_2$ in (\ref{eqn:defnumconst}) are uniformly bounded with respect to any sequence $0<\kappa_n\leq\tfrac{\delta}{\sqrt{d}}$ with $\kappa_n \searrow 0$.\end{proposition}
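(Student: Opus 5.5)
The plan is to reduce every operator to Lemma \ref{lemm:UnifBoundLemma}. Each operator in (\ref{eqn:defnumconst}) is written in the form $[A_\kappa u](x) = \sum_{j\not=i} V_j k_{ij}(\kappa)([\Pk u]_j - [\Pk u]_i)$ for $x\in\Omega_i$. For this I divide the bracketed term by $V_iV_j$, pulling $V_j$ out of $V_j^\supscr{i}$ or $V_j^\supscr{m}$. Then I verify the row and column sum condition (\ref{eqn:assunifbnd}).

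The constants come from Lemma \ref{lemm:taunumlemma}. For $\kappa$ small we have $|\tau^\num|\leq t_1$ and $\alpha^\num\leq\alpha_1$, and $0\leq w_{ij}\leq 1$ by A4. The matrices $(x_j-x_i)(x_j-x_i)^\top/\|x_j-x_i\|^2$ have norm $1$.

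\textbf{The operator $\ca{L}^\num_1$.} Here $k_{ij} = w_{ij}\rho(x_j-x_i)(\alpha^\num(x_i)+\alpha^\num(x_j))\frac{(x_j-x_i)(x_j-x_i)^\top}{\|x_j-x_i\|^2}$, which is symmetric in $i,j$ because $w_{ij}=w_{ji}$. By A4(a), $w_{ij}=0$ unless $\|x_j-x_i\|<\delta+\sqrt{d}\kappa/2$. Hence
\begin{equation*}
\sum_{j\not=i}\|V_jk_{ij}\|\leq 2\alpha_1\sum_{j\not=i,\,\|x_j-x_i\|<2\delta} V_j\rho(x_j-x_i)\leq 2\alpha_1\int_{B_{R_\kappa}(0)}\step^\kappa(y)\dd y < 2\alpha_1 C_R,
\end{equation*}
with $R=2\delta$, using $V_j\leq\kappa^d=\vol(B_j)$ and translating the lattice so that $x_i=0$. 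This is exactly the content of Lemma \ref{lemm:Bndofstep}.

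\textbf{The operator $\ca{L}^{x,\num}_2$.} The coefficient is
\begin{equation*}
k_{ij}=w_{ij}\rho(x_j-x_i)\tau^\num(x_i)\Big(\sum_{m\not=i}V_m^\supscr{i}\rho(x_m-x_i)(x_m-x_i)\Big)(x_j-x_i)^\top.
\end{equation*}
The inner sum is bounded in norm by $\sum_m V_m w_{im}\|x_m-x_i\|\leq m^\num(x_i)$-type quantities, hence by $2m_1$, or directly by $\vol(B_{2\delta})$. Also $\rho(x_j-x_i)\|x_j-x_i\|=1$. So $\|V_jk_{ij}\|\leq t_1\, 2m_1\, V_j w_{ij}$, and summing over $j$ gives at most $t_1 2m_1\vol(B_{2\delta}(0))$. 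The column sums $\sum_j\|V_jk_{ji}\|$ satisfy the same bound by the same reasoning.

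\textbf{The operators $\ca{L}^{x',\num}_2$ and $\ca{L}^{p,\num}_2$.} These are handled the same way. In $\ca{L}^{x',\num}_2$ the factor $\rho(x_i-x_j)(x_i-x_j)$ has norm $1$, and $\sum_m V_m^\supscr{j}\rho(x_m-x_j)\|x_j-x_m\|$ is bounded by $\vol(B_{2\delta})$. In $\ca{L}^{p,\num}_2$ the coefficient is
\begin{equation*}
k_{ij}=\sum_{m\not\in\{i,j\}}V_m w_{mi}w_{mj}\tau^\num(x_m)\rho(x_i-x_m)\rho(x_j-x_m)(x_i-x_m)(x_m-x_j)^\top.
\end{equation*}
Its norm is at most $t_1\sum_m V_m w_{mi}w_{mj}$. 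Summing $V_j\|k_{ij}\|$ over $j$ and swapping the order of summation gives at most $t_1\vol(B_{2\delta})^2$, for rows and columns alike.

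\textbf{Main obstacle.} The delicate step is $\ca{L}^\num_1$. There the singular weight $\rho=1/\|\cdot\|$ is not cancelled by a factor $\|x_j-x_i\|$, so the lattice sum of $1/\|x_j-x_i\|$ must be controlled uniformly in $\kappa$. This is precisely why Lemma \ref{lemm:Bndofstep} was prepared.

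One point needs care in that step: matching $\sum_j V_j\rho(x_j-x_i)$ to $\int\step^\kappa$. This requires checking that the cells $B_j$ with $w_{ij}\neq0$ lie in $B_{R_\kappa}(x_i)$ and that the cell containing $x_i$ is excluded, which holds since $j\ne i$. All remaining bounds are uniform once $\kappa$ is small enough for Lemma \ref{lemm:taunumlemma}. For the finitely many larger $\kappa_n$ in the sequence each operator is bounded trivially, so the bound is uniform over the whole sequence.
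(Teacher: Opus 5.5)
Your proof is the paper's argument: reduce to Lemma \ref{lemm:UnifBoundLemma}, bound the singular row sums of $\ca{L}^\num_1$ with Lemma \ref{lemm:Bndofstep}, and for the three $\ca{L}_2$ operators use that $\rho(y)y$ is a unit vector together with $|\tau^\num|\leq t_1$ and $0\leq w_{ij}\leq 1$. The only difference is that the paper bounds the remaining volume sums by $\vol(\Omega)$ where you use $\vol(B_{2\delta}(0))$. One slip: in $\ca{L}^{x,\num}_2$ the termwise bound on the inner sum is $\sum_m V_m w_{im}$ (because $\rho(x_m-x_i)\|x_m-x_i\|=1$), not $\sum_m V_m w_{im}\|x_m-x_i\|=m^\num(x_i)$, so the constant $2m_1$ is unjustified; your alternative bound $\vol(B_{2\delta}(0))$ is correct and gives $t_1\vol(B_{2\delta}(0))^2$.
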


\begin{proof} We apply Lemma \ref{lemm:UnifBoundLemma}, so that we only have to show (\ref{eqn:assunifbnd}) for each operator, which follows from the symmetry of the kernels and Lemma \ref{lemm:Bndofstep}, yielding for $\ca{L}^\num_1$ and small enough $0<\kappa$,
\begin{align*}
    &\sum_{j\not=i} V_jw_{ij}\rho(x_j-x_i)|\alpha^\num(x_i)+\alpha^\num(x_j)|\stackrel{}{\leq} 2\alpha_1 C_{\delta}, 
    \intertext{for $\ca{L}^{x,\num}_2$,}
    &\sum_{j\not=i} \sum_{m\not=i} V_j V_m |\tau^\num(x_i)|w_{ij}w_{im} \leq t_1 \vol(\Omega)^2,  
    \intertext{for $\ca{L}^{x',\num}_2$,}
    &\sum_{j\not=i} \sum_{m\not=i} V_j V_m |\tau^\num(x_j)|w_{ji}w_{jm} \leq t_1 \vol(\Omega)^2, 
    \intertext{for $\ca{L}^{p,\num}_2$,}
    &\sum_{j\not=i} \sum_{m\not=\{i,j\}} V_j V_m |\tau^\num(x_m)|w_{mj}w_{mi} \leq t_1 \vol(\Omega)^2.
\end{align*}
\end{proof}

With this uniform boundedness result and the boundedness of $\ca{L}$, we only need to show the pointwise convergence of $\ca{L}^\num_\kappa$ to $\ca{L}$ on a dense subspace of $L^2(\Omega)^d$ such as $C^{0,1}(\Omega)^d$. For this we need the following inequalities.

\begin{lemma}[Auxiliary Bounds]\label{lemm:auxbndlemma} The following estimates hold for all $i,j \in I^\kappa$, with $i\not=j$.
\vspace{-0.25cm}
\begin{itemize}
    \item Let $x'\in\Omega_j$, $x\in \Omega_i$, then 
\begin{equation*}
    \left\|\rho(x'-x)(x'-x)-\rho(x_j-x_i)(x_j-x_i)\right\| \leq \min\left(\frac{2\sqrt{d}\kappa}{\|x'-x\|}, \frac{2\sqrt{d}\kappa}{\|x_j-x_i\|} \right).
\end{equation*}
\item For all $x\in \Omega_i$,
\begin{equation*}
    \sum_{j\not= i}\int_{\Omega_j} \left|\rho(x'-x)-\rho(x_j-x_i)\right|\|x'-x\| \dd x' \leq \sqrt{d}\kappa C_{\diam(\Omega)}.
\end{equation*}
\end{itemize}\end{lemma}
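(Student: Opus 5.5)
The first estimate is a statement about the map $g(z) := \rho(z)z = z/\|z\|$, which sends $z\neq 0$ to the unit sphere. The plan is to use the elementary inequality for unit vectors
\begin{equation*}
    \left\|\frac{a}{\|a\|}-\frac{b}{\|b\|}\right\| \leq \frac{2\|a-b\|}{\max(\|a\|,\|b\|)}\leq \min\left(\frac{2\|a-b\|}{\|a\|},\frac{2\|a-b\|}{\|b\|}\right), \qquad a,b\neq 0.
\end{equation*}
To prove it, assume without loss of generality $\|a\|\geq\|b\|$. Write $\tfrac{a}{\|a\|}-\tfrac{b}{\|b\|} = \tfrac{a-b}{\|a\|} + b\left(\tfrac{1}{\|a\|}-\tfrac{1}{\|b\|}\right)$. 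The second term has norm $\tfrac{|\|b\|-\|a\||}{\|a\|}\leq \tfrac{\|a-b\|}{\|a\|}$ by the reverse triangle inequality, so the whole expression is at most $2\|a-b\|/\|a\|$, which is the claimed bound. We then set $a = x'-x$ and $b = x_j-x_i$. Since $x\in\Omega_i\subset B_i$ and $x'\in\Omega_j\subset B_j$, each point lies within $\tfrac{\sqrt{d}}{2}\kappa$ of its cube midpoint. Hence $\|a-b\|\leq \|x'-x_j\|+\|x-x_i\|\leq\sqrt{d}\kappa$, which gives the first bullet. The case $a=0$, i.e.\ $x'=x$, is impossible because $i\neq j$ and the cubes are disjoint. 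Also $b\neq 0$ because $x_i\neq x_j$. So both sides are well defined.

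For the second bullet, we write $\rho(x'-x)-\rho(x_j-x_i) = \tfrac{\|x_j-x_i\|-\|x'-x\|}{\|x'-x\|\|x_j-x_i\|}$. Multiplying by $\|x'-x\|$ and using again $|\|x_j-x_i\|-\|x'-x\||\leq\sqrt{d}\kappa$, the integrand is bounded by $\sqrt{d}\kappa\,\rho(x_j-x_i)$. This is constant on $\Omega_j$, so the left-hand side is at most
\begin{equation*}
    \sqrt{d}\kappa\sum_{j\neq i} V_j\rho(x_j-x_i)\leq \sqrt{d}\kappa\int_{B_{R_\kappa}(0)}\step^\kappa(y)\dd y
\end{equation*}
with $R=\diam(\Omega)$. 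To justify this comparison we translate by $x_i$. Under the translation $y = x'-x_i$, each cube $B_j$ with $j\neq i$ becomes a lattice cube with midpoint $x_j-x_i\neq 0$. Such a cube lies in $B_{R_\kappa}(0)$, since $\|x_j-x_i\|\leq\diam(\Omega)+\sqrt{d}\kappa$ whenever both cubes meet $\Omega$. The function $\step^\kappa$ equals $\rho(x_j-x_i)$ on that cube, and $V_j\leq\vol(B_j)$.

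Lemma \ref{lemm:Bndofstep} then bounds this integral by $C_{\diam(\Omega)}$, uniformly in $\kappa<\delta/\sqrt{d}$, which is exactly the claim. The only point needing care is this bookkeeping: checking that the translated cubes really lie inside the ball $B_{R_\kappa}(0)$ used in Lemma \ref{lemm:Bndofstep}, and that the excluded central cube corresponds to $j=i$. Everything else is the triangle inequality.
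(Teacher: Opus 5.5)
Your proof is correct and follows the paper's route. For the second bullet you use the same chain as the paper, namely $\bigl|\|x_j-x_i\|-\|x'-x\|\bigr|\le\sqrt{d}\kappa$ followed by Lemma \ref{lemm:Bndofstep}; your unit-vector inequality for the first bullet is the natural way to make precise the paper's ``the first claim follows similarly'', and the translation bookkeeping you add (each $B_j-x_i$ is a lattice cube with nonzero midpoint lying inside $B_{R_\kappa}(0)$, with the cube radius $\tfrac{\sqrt{d}}{2}\kappa$ absorbed by the $2\sqrt{d}\kappa$ margin) is a detail the paper leaves implicit.
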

\begin{proof}
For the second claim, note that $\rho(\zeta) = \tfrac{1}{\|\zeta\|}$, leading to
\begin{equation*}
    \begin{aligned}
        &\sum_{j\not= i}\int_{\Omega_j} \left|\frac{1}{\|x'-x\|}-\frac{1}{\|x_j-x_i\|}\right|\|x'-x\| \dd x' \\
        &\leq \sum_{j\not= i}\int_{\Omega_j} \frac{\left|\|x'-x\|-\|x_j-x_i\|\right|}{\|x'-x\|\|x_j-x_i\|}\|x'-x\| \dd x' \\
        &\leq \sum_{j\not= i}\int_{\Omega_j} \frac{\|x'-x-x_j+x_i\|}{\|x'-x\|\|x_j-x_i\|}\|x'-x\| \dd x' \\
        &\leq \sqrt{d}\kappa \sum_{j\not= i}\int_{\Omega_j} \frac{1}{\|x_j-x_i\|} \dd x' \\
        &\leq \sqrt{d}\kappa C_{\diam(\Omega)}.
    \end{aligned}
\end{equation*}
The last inequality above made use of Lemma \ref{lemm:Bndofstep}.

The first claim follows similarly.\end{proof}

We now prove the point-wise convergence of the numerical operators to their continuous counterparts. 

\begin{proposition}[Pointwise Convergence on $C^{0,1}(\Omega)^d$]\label{prop:ConvLemma} Let Assumptions \ref{ass:NumDiscr} hold. The operators $\ca{L}^\num_1$, $ \ca{L}^{x,\num}_2$, $\ca{L}^{x',\num}_2$ and $\ca{L}^{p,\num}_2$ converge pointwise to their continuous equivalents $\ca{L}_1$, $ \ca{L}^{x}_2$, $\ca{L}^{x'}_2$ and $\ca{L}^{p}_2$ on $C^{0,1}(\Omega)^d$. \end{proposition}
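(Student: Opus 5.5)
I plan to fix $u \in C^{0,1}(\Omega)^d$ with Lipschitz constant $L_u$ and treat each of the four operators separately. For each one I write the continuous operator evaluated at $x\in\Omega_i$ as a sum over $j$ of integrals over $\Omega_j$ (and, for the $k_2$ parts, a further sum over $m$ of integrals over $\Omega_m$ in the variable $p$). I then compare it term by term with the corresponding numerical summand divided by $V_i$. By the uniform boundedness in Proposition \ref{prop:UnifBound} and the boundedness of $\ca{L}$, it suffices to prove $\|\ca{L}^\num_\ast u - \ca{L}_\ast u\|_{L^2(\Omega)^d}\to 0$ for this fixed Lipschitz $u$. I will use the splitting of $I^\kappa$ from the proof of Proposition \ref{prop:LJfuncAreNumerical}:
\begin{equation*}
\hat I := \{i : \overline{B_i}\cap(\partial\Omega\cup \LFS^L(l)\cup\LFS^K(k))\neq\emptyset\}.
\end{equation*}
By Proposition \ref{prop:ClosedZeroSetApprox}, $\vol(\bigcup_{i\in\hat I}B_i)\to 0$. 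On these cells I only need a uniform pointwise bound on both operators applied to $u$. Such a bound follows from $|[\Pk u]_j-[\Pk u]_i|\le L_u(\|x_j-x_i\|+\sqrt d\kappa)$, the bounds of Lemma \ref{lemm:taunumlemma}, and Lemma \ref{lemm:Bndofstep}. Hence the contribution of $\hat I$ to the $L^2$ error vanishes.

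For $i\notin\hat I$, I estimate the difference for $\ca{L}_1$ by inserting intermediate terms and splitting it into four kinds of errors.
\begin{itemize}
\item[(a)] The weight error $|w_{ij}-\chi_{[0,\delta)}(\|x'-x\|)|$. This is controlled by Lemma \ref{lemm:BoundOnDiffOfKappa}, multiplied by the bounded factor $\rho(x'-x)\|x'-x\|^2\cdot 2\alpha_1 \cdot L_u\|x'-x\|/\|x'-x\|^2$.
\item[(b)] The kernel geometry error. This is handled by the first estimate of Lemma \ref{lemm:auxbndlemma} for $\rho(\zeta)\zeta$ and the second estimate for $\rho$; both give $O(\kappa)$ after summation, using Lemma \ref{lemm:Bndofstep}.
\item[(c)] The difference error $(u(x')-u(x))-([\Pk u]_j-[\Pk u]_i)$. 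This is bounded by $2L_u\sqrt d\kappa$ and multiplied by $\rho$, whose cell sum is uniformly integrable by Lemma \ref{lemm:Bndofstep}.
\item[(d)] The coefficient error $|\alpha(x)+\alpha(x')-\alpha^\num(x_i)-\alpha^\num(x_j)|$.
\end{itemize}
The $x$-part of (d) is $\le A_\kappa$ by Lemma \ref{lemm:taunumlemma}. The $x'$-part is delicate, because $j$ may lie in the bad set even when $i$ does not.

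The $k_2$ operators are treated the same way with one extra quadrature variable. They are actually easier, because $\rho(\zeta)\zeta$ is bounded and the $\tau$ terms carry no singularity. For each product I telescope factor by factor: $\tau\to\tau^\num$, $\rho(\cdot)(\cdot)\to$ its midpoint value, $\chi\to w$, $u$-difference $\to\Pk$-difference. Each swap costs a factor that is either $O(\kappa)$ uniformly (Lemma \ref{lemm:auxbndlemma}), $D^\delta_\kappa$ (Lemma \ref{lemm:BoundOnDiffOfKappa}), or $T_\kappa$ on good cells. The remaining factors are bounded by $t_1$, $\vol(\Omega)$ and $\diam(\Omega)$.

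\textbf{Main obstacle.} I expect the main obstacle to be the coefficients evaluated at the integration or summation variables, namely $\alpha^\num(x_j)$ in $\ca{L}_1$, $\tau^\num(x_j)$ in $\ca{L}^{x'}_2$, and $\tau^\num(x_m)$ in $\ca{L}^p_2$, whose cells may be bad. I would handle these by splitting the inner sum over $j$ (or $m$) into good and bad cells. On good cells I use $A_\kappa$ or $T_\kappa$. The bad cells contribute at most
\begin{equation*}
C\sum_{j\in\hat I}\int_{\Omega_j}\rho(x'-x)\,\mathrm{d}x' .
\end{equation*}
This tends to zero uniformly in $x$ because $\rho=1/\|\cdot\|$ is locally integrable: its integral over sets of measure $\to0$ vanishes uniformly. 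The same holds for the step version via the argument of Lemma \ref{lemm:Bndofstep} combined with H\"older's inequality with an exponent $q<d$. This gives $L^\infty$ (hence $L^2$) convergence of the good-cell part and completes the pointwise convergence on $C^{0,1}(\Omega)^d$.
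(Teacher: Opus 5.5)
Your proposal is correct and follows essentially the paper's route: the good/bad cell split controlled by Proposition~\ref{prop:ClosedZeroSetApprox}, a telescoping into weight, geometry and coefficient errors handled by Lemmas~\ref{lemm:BoundOnDiffOfKappa}, \ref{lemm:auxbndlemma}, \ref{lemm:Bndofstep} and \ref{lemm:taunumlemma}, and a further good/bad split of the inner sums over $j$ (resp.\ $m$) to deal with $\alpha^\num(x_j)$, $\tau^\num(x_j)$ and $\tau^\num(x_m)$. The differences are minor: the paper compares with $\ProjP\ca{L}_\ast u$ and invokes Lemma~\ref{lemm:PropProjOps}, so your increment error (c) never appears; on bad inner cells it uses the cancellation $\rho(x'-x)\,L\|x'-x\|=L$ to get a bounded integrand times $\vol\bigl(\bigcup_{j\in\hat I}B_j\bigr)$, which makes your uniform-integrability and H\"older argument unnecessary; and you should still explicitly discard the diagonal pieces of the continuous operators that have no numerical counterpart ($x'\in\Omega_i$, resp.\ $p\in\Omega_i\cup\Omega_j$), which are $O(\kappa^d)$ because their integrands are bounded.
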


\begin{proof} Using the triangle inequality, one obtains
\begin{equation*}
    \|(A_\kappa - A)v\|_{L^2(\Omega)^d} \leq \|(A_\kappa - \ProjP \circ A)v\|_{L^2(\Omega)^d} + \underbrace{\|(\idL - \ProjP) A v\|_{L^2(\Omega)^d}}_{\rightarrow 0}
\end{equation*}
for any operators $A$ and $A_\kappa$, so that we only need to show the convergence of the first term on the right hand side for all ${u}\in C^{0,1}(\Omega)^d$. Therefore, let $u$ be Lipschitz-continuous with Lipschitz constant $L>0$. 

For $x\in \Omega_i$, we first get
\begin{equation}
\label{eqn:op1convdiff}
    \begin{aligned}
        &[(\ca{L}_1^\num - \ProjP\circ\ca{L}_1)u](x) \\
        &= \frac{1}{V_i}\int_{\Omega_i}\sum_{j\not=i}\int_{\Omega_j}\Bigg(w_{ij} \rho(x_i-x_j)(\alpha^\num(x_i)+\alpha^\num(x_j))\frac{\left(x_j-x_i\right)\left(x_j-x_i\right)^\top}{\|x_j-x_i\|\|x_j-x_i\|} \\
        &\qquad-\chi_{[0,\delta)}(\|x'-x\|) \rho(x'-x)(\alpha(x)+\alpha(x'))\frac{\left(x'-x\right)\left(x'-x\right)^\top}{\|x'-x\|\|x'-x\|}\Bigg) (u(x')- u(x)) \dd x' \dd x \\
        &\quad-\frac{1}{V_i}\int_{\Omega_i}\int_{\Omega_i}\chi_{[0,\delta)}(\|x'-x\|) \rho(x'-x)(\alpha(x)+\alpha(x'))\tfrac{\left(x'-x\right)\left(x'-x\right)^\top}{\|x'-x\|\|x'-x\|} (u(x')- u(x)) \dd x' \dd x. \\
     \end{aligned}
\end{equation}
The latter term satisfies
\begin{equation}
\label{eqn:lattersummands}
     \begin{aligned}
         &\left\|\frac{1}{V_i}\int_{\Omega_i}\int_{\Omega_i}\chi_{[0,\delta)}(\|x'-x\|) \frac{\alpha(x)+\alpha(x')}{\|x'-x\|}\frac{\left(x'-x\right)\left(x'-x\right)^\top}{\|x'-x\|\|x'-x\|} (u(x')- u(x)) \dd x' \dd x\right\|\\
         &\leq L\frac{1}{V_i}\int_{\Omega_i}\int_{\Omega_i}|\alpha(x)+\alpha(x')| \dd x' \dd x \\
         &\leq 2\alpha_1 L \kappa^d \xrightarrow[]{\kappa \searrow 0} 0.
     \end{aligned}
\end{equation}
Before we treat the first term, we define $P_\kappa:= \{i: \overline{B_i} \cap (\partial \Omega \cup LFS(l)\cup LFS(k))= \emptyset \}$. We will now show
\begin{equation*}
    \begin{aligned}
        &\|(\ca{L}_1^\num - \ProjP\circ\ca{L}_1)u\|^2_{L^2} \\
        &\stackrel{{\kappa << 1}}{\leq} \sum_{i\in P\kappa} \int_{\Omega_i} \underbrace{\|[(\ca{L}_1^\num - \ProjP\circ\ca{L}_1)u](x)\|^2}_{\xRightarrow[\text{unif}]{*} 0}\dd x\\
        &\qquad+\underbrace{\left(\sup_{0<\kappa<<1} \max_{i\not\in P_\kappa}\sup_{x\in \Omega_i} \|[(\ca{L}_1^\num - \ProjP\circ\ca{L}_1)u](x)\|^2\right)}_{\stackrel{*}{<} \infty} \underbrace{\vol\left(\bigcup_{i\not\in P_\kappa} B_i \right).}_{\xrightarrow[**]{\kappa\searrow 0} 0}
    \end{aligned}
\end{equation*}
The claims left to prove are marked with a star. The claim marked with $**$ follows from Proposition \ref{prop:ClosedZeroSetApprox}. This strategy will remain roughly the same for each operator. 

Using it, we only have to show the convergence on the $\Omega_i$ corresponding to $i \in P_\kappa$ and show the uniform boundedness on the rest.  The former will be proven by showing a uniform convergence, $\|[(\ca{L}_1^\num - \ProjP\circ\ca{L}_1)u](x)\|\leq C^{\rho, \Omega,\alpha,\tau}_\kappa \xrightarrow[]{\kappa \searrow 0} 0$ for all $x\in \Omega_i$ and $i\in P_\kappa$.

Note that due to the uniform convergence of the second summand in (\ref{eqn:op1convdiff}) as shown in (\ref{eqn:lattersummands}), we only have to show the boundedness and convergence claims for the first.

It follows for $i\in P_\kappa$, from Lemma \ref{lemm:BoundOnDiffOfKappa}, \ref{lemm:auxbndlemma}, \ref{lemm:Bndofstep} and applying Lemma \ref{lemm:LFS} on $\overline{B_i}$ to obtain the Lipschitz continuity of $\alpha_{|B_i}$, 
\begin{align*}        
    &L\frac{1}{V_i}\int_{\Omega_i}\sum_{j\not=i}\int_{\Omega_j}\Bigg\|w_{ij} (\alpha^\num(x_i)+\alpha^\num(x_j))\rho(x_j-x_i)\frac{\left(x_j-x_i\right)\left(x_j-x_i\right)^\top}{\|x_j-x_i\|\|x_j-x_i\|}\\ 
    &\qquad-\chi_{[0,\delta)}(\|x'-x\|)(\alpha(x)+\alpha(x'))\rho(x'-x)\frac{\left(x'-x\right)\left(x'-x\right)^\top}{\|x'-x\|\|x'-x\|}\Bigg\| \|x'- x\| \dd x' \dd x \\
    &\leq L\frac{1}{V_i}\int_{\Omega_i}\sum_{j\not=i}\int_{\Omega_j}\left|w_{ij}-\chi_{[0,\delta)}(\|x'-x\|)\right| \frac{|\alpha(x)+\alpha(x')|}{\|x'-x\|}\|x'-x\|\dd x' \dd x \\
    &\quad+ L\frac{1}{V_i}\int_{\Omega_i}\sum_{j\not=i}\int_{\Omega_j}w_{ij} \frac{|\alpha(x)-\alpha^\num(x_i)+\alpha(x')-\alpha^\num(x_j)|}{\|x'-x\|}\|x'-x\|\dd x' \dd x \\
    &\quad+ L\frac{2}{V_i}\int_{\Omega_i}\sum_{j\not=i}\int_{\Omega_j}w_{ij} \alpha_1\left|\rho(x'-x) - \rho(x_j-x_i)\right|\|x'-x\|\dd x' \dd x \\
    &\quad+ L\frac{4}{V_i}\int_{\Omega_i}\sum_{j\not=i}\int_{\Omega_j}w_{ij}\rho(x_j-x_i)\alpha_1\left\|\frac{x'-x}{\|x'-x\|}-\frac{x_j-x_i}{\|x_j-x_i\|}\right\|\|x'-x\|\dd x' \dd x \\
    &\stackrel{}{\leq} 2L\alpha_1D^{\delta}_\kappa +L2A_\kappa\vol(\Omega)+L\vol\left(\bigcup_{j\not\in P_\kappa} B_j\right)(A_\kappa + 2\alpha_1) \\ 
    &\quad+ 2L\alpha_1 \sqrt{d}\kappa C_{\diam(\Omega)} + L8\alpha_1\sqrt{d}\kappa C_{\diam(\Omega)} \xrightarrow[P\ref{prop:ClosedZeroSetApprox}]{\kappa \searrow 0} 0.
\end{align*}
The boundedness claims follow directly from
\begin{equation*}
     \begin{aligned}
         &L\frac{1}{V_i}\int_{\Omega_i}\sum_{j\not=i}\int_{\Omega_j}\Bigg\|w_{ij}\frac{|\alpha^\num(x_i)+\alpha^\num(x_j)|}{\|x_j-x_i\|}\frac{\left(x_j-x_i\right)\left(x_j-x_i\right)^\top}{\|x_j-x_i\|\|x_j-x_i\|} \\
         &\qquad-\chi_{[0,\delta)}(\|x'-x\|) \frac{|\alpha(x)+\alpha(x')|}{\|x'-x\|}\frac{\left(x'-x\right)\left(x'-x\right)^\top}{\|x'-x\|\|x'-x\|}\Bigg\| \|x'- x\| \dd x' \dd x \\
         &\leq 2L\alpha_1 \diam(\Omega)C_{\diam(\Omega)}+ L\vol(\Omega)2\alpha_1<\infty.
     \end{aligned}  
\end{equation*}
This implies the convergence of $\ca{L}^{\num}_1u$ to $\ca{L} u$ for $u\in C^{0,1}(\Omega)^d$. The convergence of the other operators follow similarly; assuming $i\in P^\kappa$,
\begin{equation*}
    \begin{aligned}
        &[(\ca{L}_2^{x,\num} - \ProjP\circ\ca{L}_2^{x} )u](x) \\
        &=\frac{1}{V_i}\int_{\Omega_i}\sum_{j\not=i}\int_{\Omega_j}\sum_{m\not=i}\int_{\Omega_m}\Bigg(\tau^\num(x_{i})\rho(x_m-x_i)\left(x_{m}-x_{i}\right)\rho(x_j-x_i)\left(x_{j}-x_{i}\right)^\top w_{ij}w_{im}\\
        &\qquad- \tau(x)\rho(p-x)\left(p-x\right)\rho(x'-x)\left(x'-x\right)^\top\chi_{[0,\delta)}(\|x'-x\|)\chi_{[0,\delta)}(\|p-x\|)\Bigg)\\
        &\qquad\qquad\qquad\times(u(x')-u(x)) \dd p \dd x' \dd x \\
        &\quad -\frac{1}{V_i}\int_{\Omega_i}\sum_{j\not=i}\int_{\Omega_j}\int_{\Omega_i} \tau(x)\frac{\left(p-x\right)\left(x'-x\right)^\top}{\|p-x\|\|x'-x\|}\chi_{[0,\delta)}(\|x'-x\|)\\
        &\qquad\qquad\qquad\times\chi_{[0,\delta)}(\|p-x\|)(u(x')-u(x)) \dd p \dd x' \dd x \\
        &\quad -\frac{1}{V_i}\int_{\Omega_i}\int_{\Omega_i}\int_{\Omega} \tau(x)\frac{\left(p-x\right)\left(x'-x\right)^\top}{\|p-x\|\|x'-x\|}\chi_{[0,\delta)}(\|x'-x\|)\\
        &\qquad\qquad\qquad\times\chi_{[0,\delta)}(\|p-x\|)(u(x')-u(x)) \dd p \dd x' \dd x.
    \end{aligned}
\end{equation*}
The last two terms above converge to $0$ as their integration domains vanish as $\kappa \searrow 0$ while the integrand stays bounded. \newline
The convergence for the first term and $i\in P^\kappa$ is given by Lemma \ref{lemm:taunumlemma}, as well as
\begin{equation*}
    \begin{aligned}
        &L\frac{1}{V_i}\int_{\Omega_i}\sum_{j\not=i}\int_{\Omega_j}\sum_{m\not=i}\int_{\Omega_m}|\tau(x) - \tau^\num(x_i)| \|x'-x\| \dd p \dd x' \dd x \\
        &\quad+L\frac{1}{V_i}\int_{\Omega_i}\sum_{j\not=i}\int_{\Omega_j}\sum_{m\not=i}\int_{\Omega_m} |\tau^\num(x_i)|\frac{2\sqrt{d}\kappa}{\|x_m-x_i\|}\|x'-x\| \dd p \dd x' \dd x \\
        &\quad+L\frac{1}{V_i}\int_{\Omega_i}\sum_{j\not=i}\int_{\Omega_j}\sum_{m\not=i}\int_{\Omega_m} |\tau^\num(x_i)|\frac{2\sqrt{d}\kappa}{\|x'-x\|}\|x'-x\| \dd p \dd x' \dd x \\
        &\quad+L\frac{1}{V_i}\int_{\Omega_i}\sum_{j\not=i}\int_{\Omega_j}\sum_{m\not=i}\int_{\Omega_m} |\tau^\num(x_i)| |w_{ij}-\chi_{[0,\delta)}(\|x'-x\|) |\|x'-x\|\dd p \dd x' \dd x \\   
        &\quad+L\frac{1}{V_i}\int_{\Omega_i}\sum_{j\not=i}\int_{\Omega_j}\sum_{m\not=i}\int_{\Omega_m}  |\tau^\num(x_i)|| \chi_{[0,\delta)}(\|p-x\|)- w_{im}|\|x'-x\|\dd p \dd x' \dd x \\
        &\leq LT_\kappa \vol(\Omega)^2\diam(\Omega) +2Lt_1\sqrt{d}\kappa \diam(\Omega)C_{\diam(\Omega)}\vol(\Omega) +2Lt_1\sqrt{d}\kappa \vol(\Omega)^2\\
        &\quad+Lt_1\diam(\Omega)\vol(\Omega)D^{\delta}_\kappa+ Lt_1\diam(\Omega)\vol(\Omega)D^{\delta}_\kappa \kappa \xrightarrow[]{\kappa \searrow 0} 0.
    \end{aligned}
\end{equation*}
The boundedness follows from
\begin{equation*}
    \begin{aligned}
        &\Bigg\|\frac{1}{V_i}\int_{\Omega_i}\sum_{j\not=i}\int_{\Omega_j}\sum_{m\not=i}\int_{\Omega_m}\Bigg(\tau^\num(x_{i})\tfrac{\left(x_{m}-x_{i}\right)\left(x_{j}-x_{i}\right)^\top}{\|x_{m}-x_{i}\|\|x_{j}-x_{i}\|}w_{ij}w_{im}\\
        &\quad- \tau(x)\tfrac{\left(p-x\right)\left(x'-x\right)^\top}{\|p-x\|\|x'-x\|}\chi_{[0,\delta)}(\|x'-x\|)\chi_{[0,\delta)}(\|p-x\|)\Bigg)(u(x')-u(x)) \dd p \dd x' \dd x\Bigg\| \\
        &\leq 2Lt_1\diam(\Omega)\vol(\Omega)^2.
    \end{aligned}
\end{equation*}
This gives the pointwise convergence of the second operator.
The convergence of $\ca{L}_2^{x',\num}$ and $\ca{L}_2^{p,\num}$ follows from splitting the sums into $j\in P_\kappa$ and $j\not\in P_\kappa$ or $m\in P_\kappa$ and $m\not\in P_\kappa$, respectively, and showing the convergence of both parts with similar tools as above. \end{proof}

As described prior, this yields the statement in (\ref{eqn:numconv}).

\begin{corollary}[Uniform Boundedness and Pointwise Convergence]\label{corol:NumConvCoroll} Let Assumptions \ref{ass:NumDiscr} hold. Then the operators $\ca{L}^\num_\kappa$ are uniformly bounded with respect to $\kappa$ and pointwise convergent to $\ca{L}$ on $L^2(\Omega)^d$. \end{corollary}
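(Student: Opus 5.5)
The plan is to combine Lemma \ref{lemm:OpSplitSb}, Proposition \ref{prop:UnifBound} and Proposition \ref{prop:ConvLemma} through a standard density argument of Banach--Steinhaus type.

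\emph{Uniform boundedness.} By Lemma \ref{lemm:OpSplitSb},
\begin{equation*}
\ca{L}^\num_\kappa=\ca{L}^{\num}_1+\ca{L}^{x,\num}_2+\ca{L}^{x',\num}_2+\ca{L}^{p,\num}_2 .
\end{equation*}
Proposition \ref{prop:UnifBound} bounds each summand uniformly for $\kappa$ small enough, via Lemma \ref{lemm:UnifBoundLemma} together with Lemma \ref{lemm:Bndofstep} and Lemma \ref{lemm:taunumlemma}. The triangle inequality then gives $\|\ca{L}^\num_\kappa\|\le C$ with $C$ independent of $\kappa$. For the finitely many larger $\kappa$ in a given sequence (if any), each operator is bounded anyway, since it is a finite-rank operator composed with $\Pk$.

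\emph{Pointwise convergence on a dense set.} Proposition \ref{prop:ConvLemma} gives $\ca{L}^\num_\kappa v\to \ca{L}v$ in $L^2(\Omega)^d$ for every $v\in C^{0,1}(\Omega)^d$. Here I use that the continuous splitting (\ref{eqn:cont_op_split}) sums to $\ca{L}$, so the four convergences add up. I then need the density of $C^{0,1}(\Omega)^d$ in $L^2(\Omega)^d$. This follows because $C^\infty_c(\Omega)^d\subset C^{0,1}(\Omega)^d$ is dense in $L^2(\Omega)^d$. Alternatively, $C^0_0(\Omega)^d$ is dense, as already used in Lemma \ref{lemm:PropProjOps}, and can itself be approximated by Lipschitz functions through mollification.

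\emph{Extension to all of $L^2(\Omega)^d$.} Let $v\in L^2(\Omega)^d$ and $\epsilon>0$. I choose $w\in C^{0,1}(\Omega)^d$ with $\|v-w\|_{L^2}<\epsilon$ and estimate
\begin{equation*}
\|\ca{L}^\num_\kappa v-\ca{L}v\|_{L^2}\le \|\ca{L}^\num_\kappa\|\,\|v-w\|_{L^2}+\|\ca{L}^\num_\kappa w-\ca{L}w\|_{L^2}+\|\ca{L}\|\,\|w-v\|_{L^2}.
\end{equation*}
The right-hand side is at most $(C+\|\ca{L}\|)\epsilon+o(1)$. Here $\ca{L}$ is bounded by \cite{Mengesha2014}, Lemma 5. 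Letting $\kappa\searrow0$ and then $\epsilon\searrow0$ gives the claim.

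The only genuinely delicate point is one of bookkeeping: that the numerical splitting of Lemma \ref{lemm:OpSplitSb} matches the continuous splitting term by term, so that Proposition \ref{prop:ConvLemma} really yields convergence of the full operator to $\ca{L}$. This includes the sign conventions in $\ca{L}^{x',\num}_2$ and the handling of the excluded diagonal indices $m\in\{i,j\}$, whose contributions vanish as shown in the proof of Proposition \ref{prop:ConvLemma}. Once that is in place, the argument is routine.
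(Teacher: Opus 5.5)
Your proposal is correct and follows essentially the same route as the paper. Both arguments combine the uniform bound from Proposition \ref{prop:UnifBound} (via the splitting in Lemma \ref{lemm:OpSplitSb}) with pointwise convergence on the dense subspace $C^\infty_0(\Omega)^d\subset C^{0,1}(\Omega)^d$ from Proposition \ref{prop:ConvLemma}, and then extend to all of $L^2(\Omega)^d$ by the standard $\epsilon/3$ argument, using that $\ca{L}$ is bounded; you simply write out the estimate and the handling of finitely many non-small $\kappa$, which the paper leaves implicit.
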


\begin{proof} Note that $C^\infty_0(\Omega)^d\subset C^{0,1}(\Omega)^d$ is dense in $L^2(\Omega)^d$. Further, the operators $\ca{L}^\num_\kappa$ are uniformly bounded, and $\ca{L}$ is bounded. This implies, together with Proposition \ref{prop:ConvLemma}, that the point-wise convergence holds on all of $L^2(\Omega)^d$. \end{proof}

The only thing now left to prove are the coercitivity claims stated in (\ref{eqn:numcoerc}), for which we require the following result. Its rather lengthy but elementary proof is given in the appendix.\newline

\begin{lemma}[Potential Form of The Bilinear Form]\label{lemm:PotFormLemma} One has for all $\vec{u} \in \vec{\ca{V}}_\kappa$,
\begin{equation*}
    \vec{u}^\top \vec{B}^\num \vec{u} = 2\sum_{i} V_i W_\kappa^\num(\vec{u}, x_i),\vspace{-0.2cm}
\end{equation*}
with 
\begin{equation}
\begin{aligned}
\label{eqn:potformWeqn}
    W_\kappa^\num(\vec{u}, x_i)&=\frac{k(x_i)}{2}\left(\frac{d}{m^\num(x_i)}\sum_{j\not= i}\int_{\Omega_j}w_{ij}\rho(x_{j}-x_{i})\left(x_{j}-x_{i}\right)^\top(\vec{u}_j-\vec{u}_i)  \dd x'\right)^2 \\
    &\quad+\frac{\alpha^\num(x_i)}{2}\sum_{j\not=i} \int_{\Omega_j}w_{ij}\rho(x_j-x_i)\|x_j-x_i\|^2\Bigg(\frac{\left(x_j-x_i\right)^\top(\vec{u}_j-\vec{u}_i)}{\|x_j-x_i\|^2} \\
    &\qquad- \frac{1}{m^\num(x_i)}\sum_{m\not= i}\int_{\Omega_m}w_{im}\rho(x_m-x_i)\left(x_{m}-x_{i}\right)^\top(\vec{u}_m-\vec{u}_i)  \dd p\Bigg)^2 \dd x'
\end{aligned}
\end{equation}\end{lemma}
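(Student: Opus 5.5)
The plan is a direct algebraic verification: expand the right-hand side $2\sum_i V_i W^\num_\kappa(\vec u,x_i)$ and match it term by term with $\vec u^\top\vec B^\num\vec u$ computed from (\ref{eqn:bstiffnumconst}) and (\ref{eqn:defnumconstparts}). First simplify $W^\num_\kappa$. The inner integrals are over $\Omega_j$ (resp.\ $\Omega_m$) of functions constant in $x'$ (resp.\ $p$), so each produces a factor $V_j$. Abbreviate
\begin{equation*}
  \theta_i := \sum_{j\not=i} V_j^\supscr{i}\rho(x_j-x_i)(x_j-x_i)^\top(\vec u_j-\vec u_i),\qquad e_{ij} := \frac{(x_j-x_i)^\top(\vec u_j-\vec u_i)}{\|x_j-x_i\|^2}.
\end{equation*}
Then $W^\num_\kappa(\vec u,x_i)=\tfrac{k(x_i)d^2}{2m^\num(x_i)^2}\theta_i^2+\tfrac{\alpha^\num(x_i)}{2}\sum_{j\not=i}V_j^\supscr{i}\rho(x_j-x_i)\|x_j-x_i\|^2\bigl(e_{ij}-\theta_i/m^\num(x_i)\bigr)^2$.

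Expand the square in the second term. The cross term gives $-2\alpha^\num(x_i)\theta_i^2/m^\num(x_i)$. The quadratic term gives $\alpha^\num(x_i)\theta_i^2 m^\num(x_i)/m^\num(x_i)^2$, because $\sum_{j}V_j^\supscr{i}\rho(x_j-x_i)\|x_j-x_i\|^2=m^\num(x_i)$ by definition; the $j=i$ term vanishes, so the sum may run over $j\not=i$. Together these reduce to $-\alpha^\num(x_i)\theta_i^2/m^\num(x_i)$. Using $\alpha^\num=l/m^\num$, the coefficient of $\theta_i^2$ becomes $\tfrac12\bigl(\tfrac{k(x_i)d^2-l(x_i)}{m^\num(x_i)^2}\bigr)=\tfrac12\tau^\num(x_i)$. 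Hence
\begin{equation*}
2\sum_iV_iW^\num_\kappa=\sum_iV_i\tau^\num(x_i)\theta_i^2+\sum_iV_i\alpha^\num(x_i)\sum_{j\not=i}V_j^\supscr{i}\rho(x_j-x_i)\frac{\bigl((x_j-x_i)^\top(\vec u_j-\vec u_i)\bigr)^2}{\|x_j-x_i\|^2}.
\end{equation*}
This is exactly the discrete analogue of the continuous identity in \cite{Mengesha2014}.

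Next, verify that $\vec u^\top\vec B^\num\vec u$ equals this expression. The cleanest route avoids expanding (\ref{eqn:bstiffnumconst}) index by index. Instead, observe that (\ref{eqn:sumeqI}) was obtained from $B(u,v)$ by substituting $u=\chi_{\Omega_i}e_k$, $v=\chi_{\Omega_j}e_l$ and splitting domains. The numerical version (\ref{eqn:bstiffnumconst}) is the same combinatorial rearrangement applied to the discrete bilinear form
\begin{equation*}
B^\num(\vec u,\vec v)=\sum_iV_i\tau^\num(x_i)\theta_i(\vec u)\theta_i(\vec v)+\sum_i\sum_{j\not=i}V_iV_j^\supscr{i}\alpha^\num(x_i)\rho(x_j-x_i)\frac{(x_j-x_i)^\top(\vec u_j-\vec u_i)\,(x_j-x_i)^\top(\vec v_j-\vec v_i)}{\|x_j-x_i\|^2}.
\end{equation*}
The key is that the mapping from "continuous integrals" to "quadrature values" replaces each $\int_{\Omega_a}\!\cdot\,dx$ by $V_a$ times the midpoint value, with the cutoff replaced by $w$. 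This correspondence is linear and respects the domain splitting, so one can check bilinearity termwise. Concretely, plug $\vec u=e_k\delta_{\cdot i}$, $\vec v=e_l\delta_{\cdot j}$ into $B^\num$ and recover (\ref{eqn:bstiffnumconst}).

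In the $\tau$-part, $\theta_a(e_k\delta_{\cdot i})$ equals $V_i^\supscr{a}\rho(x_i-x_a)(x_i-x_a)^\top e_k$ if $a\not=i$, and $-\sum_{n\not=i}V_n^\supscr{i}\rho(x_n-x_i)(x_n-x_i)^\top e_k$ if $a=i$. The product $\theta_a\theta_a$ then splits into the cases $a=i$, $a=j$, and $a\notin\{i,j\}$, giving the three $\ca I^{2,\num}$ sums with the right signs, using $\rho$ even. In the $\alpha$-part, symmetrize over $(i,j)$ via $w_{ij}=w_{ji}$ and $\rho(-\zeta)=\rho(\zeta)$, which produces the factor $\alpha^\num(x_i)+\alpha^\num(x_j)$ in $\ca I^{1,\num}$.

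The main obstacle is bookkeeping. One must check the sign and index conventions of the three $\ca I^{2,\num}$ terms, including the $m\not=j$ versus $m\not=i$ ranges, and make sure the diagonal case $i=j$ (the double sum $\sum_{n,m\not=i}$) matches $\theta_i^2$ and $\sum_{m\not=i}\ca I^{2,\num}_{mii}$. Restricting to $\vec u\in\vec{\ca V}_\kappa$ plays no role, since the identity holds on all of $\mathbb R^{d|I^\kappa|}$.
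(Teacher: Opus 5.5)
Your proof is correct, but it reaches the key intermediate identity
\[
\vec u^\top\vec B^\num\vec u=\sum_i V_i\tau^\num(x_i)\theta_i^2+\sum_i V_i\alpha^\num(x_i)\sum_{j\neq i}V_j^{\supscr{i}}\rho(x_j-x_i)\frac{\bigl((x_j-x_i)^\top(\vec u_j-\vec u_i)\bigr)^2}{\|x_j-x_i\|^2}
\]
by a different route than the paper.

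The paper starts from $\vec u^\top\vec B^\num\vec u=\langle\Kk\vec u,-\ca{L}^\num_\kappa\Kk\vec u\rangle_{L^2(\Omega)^d}$ and inserts the operator splitting of Lemma \ref{lemm:OpSplitSb}. It then renames and shifts indices in the three triple sums of $\tau^\num$-terms until they collapse into $\sum_iV_i\tau^\num(x_i)\theta_i^2$. You instead write down the polarized form $B^\num(\vec u,\vec v)$ and verify that its Gram matrix on the unit vectors $e_k\delta_{\cdot i}$ is exactly (\ref{eqn:bstiffnumconst}).

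That entrywise check does go through:
\begin{itemize}
\item For $i\neq j$, the cases $a\notin\{i,j\}$, $a=i$ and $a=j$ give $\ca{I}^{2,\num}_{aij}$, $-\sum_{n\neq i}\ca{I}^{2,\num}_{inj}$ and $-\sum_{n\neq j}\ca{I}^{2,\num}_{jin}$ index by index. No symmetry of $\rho$ or $w$ is used here.
\item For $i=j$, the same cases give $\sum_{m\neq i}\ca{I}^{2,\num}_{mii}$ and $\sum_{n,m\neq i}\ca{I}^{2,\num}_{inm}$.
\item In the $\alpha^\num$-part, only the ordered pairs with $\{a,b\}=\{i,j\}$ (resp.\ $i\in\{a,b\}$) survive. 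Via $w_{ij}=w_{ji}$ and $\rho(-\zeta)=\rho(\zeta)$ they combine into $-\ca{I}^{1,\num}_{ij}$ (resp.\ $\sum_{m\neq i}\ca{I}^{1,\num}_{im}$).
\end{itemize}
Your completing-the-square step uses $\sum_{j\neq i}V_j^{\supscr{i}}\rho(x_j-x_i)\|x_j-x_i\|^2=m^\num(x_i)$ and $\alpha^\num m^\num=l$. It is the paper's final computation read backwards.

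Your route gives three things:
\begin{itemize}
\item a finite, mechanical entrywise check in place of error-prone reindexing of triple sums;
\item independence from Lemma \ref{lemm:OpSplitSb};
\item the full polarized identity $\vec u^\top\vec B^\num\vec v=B^\num(\vec u,\vec v)$, and hence symmetry of $\vec B^\num$, for free.
\end{itemize}
The paper's route works directly in the operator representation $\ca{L}^\num_\kappa$, which it needs anyway for the convergence part. Your remark that quadrature ``commutes with the domain splitting'' is only motivation; the entrywise verification is what carries the proof.
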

\begin{proof}See Lemma \ref{lemm:potform}. \end{proof}

The form given in (\ref{eqn:potformWeqn}) is the numerical equivalent to the continuous potential, and just like for the continuous potential, one can derive a corresponding lower estimate as follows (compare with \cite{Mengesha2014}, Proposition 1).

\begin{lemma}[Helper Term Lower Bound]\label{lemm:IneqDiscrPot} There is a $c'>0$ such that for all $\kappa>0$ small enough, one has for all $\vec{u} \in \vec{\ca{V}}_\kappa$,
\begin{equation}
\label{eqn:numlowerbound1}
    \begin{aligned}
         &c' \sum_{i} \int_{\Omega_i} \sum_{j\not=i} \int_{\Omega_j} w_{ij} \rho(x_j-x_i) \|x_j-x_i\|^2\left(\frac{\left(x_j-x_i\right)^\top (\vec{u}_j - \vec{u}_i)}{\|x_j-x_i\|^2}\right)^2 \dd x' \dd x\\
         &\leq \sum_{i} V_i W_\kappa^\num(\vec{u}, x_i).
    \end{aligned}
\end{equation}\end{lemma}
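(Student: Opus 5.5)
The plan is to mimic the continuous argument of \cite{Mengesha2014}, Proposition 1, at the level of the discrete sums, using the two nonnegative summands of $W^\num_\kappa$ from Lemma \ref{lemm:PotFormLemma}. Fix $i$ and abbreviate
\begin{equation*}
    a_{ij} := \frac{(x_j-x_i)^\top(\vec{u}_j-\vec{u}_i)}{\|x_j-x_i\|^2},\qquad \theta_i := \frac{1}{m^\num(x_i)}\sum_{m\not=i} V_m w_{im}\rho(x_m-x_i)(x_m-x_i)^\top(\vec{u}_m-\vec{u}_i).
\end{equation*}
Using $\rho(\zeta)\|\zeta\|^2=\|\zeta\|$ and the definition of $m^\num$, one has $\theta_i = \tfrac{1}{m^\num(x_i)}\sum_{j\not=i}V_j w_{ij}\rho(x_j-x_i)\|x_j-x_i\|^2 a_{ij}$, i.e.\ $\theta_i$ is exactly the weighted mean of the $a_{ij}$ with respect to the weights $\mu_{ij}:=V_jw_{ij}\rho(x_j-x_i)\|x_j-x_i\|^2$, whose total mass is $m^\num(x_i)$ (the $j=i$ term contributes nothing since $x_j=x_i$). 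The first summand of $W^\num_\kappa(\vec u,x_i)$ is then $\tfrac{k(x_i)}{2}d^2\theta_i^2$ and the second is $\tfrac{\alpha^\num(x_i)}{2}\sum_{j\not=i}\mu_{ij}(a_{ij}-\theta_i)^2$.

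The key step is the elementary inequality $a^2\le 2(a-\theta)^2+2\theta^2$, summed with weights $\mu_{ij}$:
\begin{equation*}
    \sum_{j\not=i}\mu_{ij}a_{ij}^2 \le 2\sum_{j\not=i}\mu_{ij}(a_{ij}-\theta_i)^2 + 2m^\num(x_i)\theta_i^2 .
\end{equation*}
By Lemma \ref{lemm:taunumlemma}, for $\kappa$ small we have $\alpha^\num(x_i)\ge\alpha_0$ and $m^\num(x_i)\le 2m_1$, while $k(x_i)\ge k_0$ by A2 of Assumptions \ref{ass:NumDiscr}. Hence the right hand side is bounded by $C_0\,W^\num_\kappa(\vec u,x_i)$ with $C_0 := \max\bigl(4/\alpha_0,\ 8m_1/(k_0d^2)\bigr)$, a constant independent of $i$ and $\kappa$. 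Multiplying by $V_i$ and summing over $i$, and noting that the left hand side of (\ref{eqn:numlowerbound1}) equals $\sum_i V_i\sum_{j\not=i}\mu_{ij}a_{ij}^2$ because the integrands are constant on $\Omega_i\times\Omega_j$, yields the claim with $c':=1/C_0$.

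The only delicate point is the sign bookkeeping. One must make sure that Lemma \ref{lemm:PotFormLemma} indeed presents $W^\num_\kappa$ with $k(x_i)$ (not $\tau^\num$, which may be negative) in front of the squared dilatation term, so that both summands are nonnegative. One must also check that the uniform bounds of Lemma \ref{lemm:taunumlemma} hold for every $i\in I^\kappa$, including cells touching $\partial\Omega$ or the discontinuity sets. The bound on $m^\num$ stated there is global, obtained from (\ref{eqn:bndmmnum}) for all $i$, so I expect this to go through; it is the step that determines the requirement that $\kappa$ be small enough. Observe that the restriction $\vec u\in\vec{\ca V}_\kappa$ is not needed for this inequality; it only enters later, when the helper term is bounded below by $\|\Kk\vec u\|^2$.
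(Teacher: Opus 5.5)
Your proof is correct and follows the paper's argument. Like the paper, you split $\sum_{j\neq i}\mu_{ij}a_{ij}^2$ into the deviatoric part $\sum_{j\neq i}\mu_{ij}(a_{ij}-\theta_i)^2$ and the dilatational part $m^\num(x_i)\theta_i^2$, then absorb the coefficients via $\alpha^\num\geq\alpha_0$, $k\geq k_0$ and $m^\num\leq 2m_1$ from Lemma~\ref{lemm:taunumlemma}. The only difference is that the paper uses the exact identity $\sum_{j\neq i}\mu_{ij}a_{ij}^2=\sum_{j\neq i}\mu_{ij}(a_{ij}-\theta_i)^2+m^\num(x_i)\theta_i^2$, which holds because $\theta_i$ is the weighted mean you identified, instead of your inequality with a factor $2$. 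This only changes the constant to $c'=\bigl(2(\tfrac{1}{\alpha_0}+\tfrac{2m_1}{d^2k_0})\bigr)^{-1}$.
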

\begin{proof}
\begin{equation*}
    \begin{aligned}
        &\sum_{i} \int_{\Omega_i} \sum_{j\not=i} \int_{\Omega_j} w_{ij} \rho(x_j-x_i)\|x_j-x_i\|^2\left(\frac{\left(x_j-x_i\right)^\top (\vec{u}_j - \vec{u}_i)}{\|x_j-x_i\|^2}\right)^2 \dd x' \dd x \\
        &=\sum_{i} \int_{\Omega_i} \sum_{j\not=i} \int_{\Omega_j} w_{ij} \rho(x_j-x_i)\|x_j-x_i\|^2\Bigg(\frac{\left(x_j-x_i\right)^\top (\vec{u}_j - \vec{u}_i)}{\|x_j-x_i\|^2}\\
        &\qquad- \frac{1}{m^\num(x_i)}\sum_{m\not= i}\int_{\Omega_m}w_{im}\rho(x_m-x_i)\left(x_{m}-x_{i}\right)^\top(\vec{u}_m-\vec{u}_i)  \dd p \Bigg)^2 \dd x' \\
        &\quad +\frac{m^\num(x_i)}{d^2}\left(\frac{d}{m^\num(x_i)}\sum_{m\not= i}\int_{\Omega_m}w_{im}\rho(x_m-x_i)\left(x_{m}-x_{i}\right)^\top(\vec{u}_m-\vec{u}_i) \dd p\right)^2\dd x\\
    \end{aligned}
\end{equation*}
\begin{equation*}
    \begin{aligned}
        &\leq\sum_{i} \int_{\Omega_i} \frac{1}{\alpha_0}\alpha^\num(x_i)\sum_{j\not=i} \int_{\Omega_j} w_{ij} \rho(x_j-x_i)\|x_j-x_i\|^2\Bigg(\frac{\left(x_j-x_i\right)^\top (\vec{u}_j - \vec{u}_i)}{\|x_j-x_i\|^2}\\
        &\qquad\quad- \frac{1}{m^\num(x_i)}\sum_{m\not= i}\int_{\Omega_m}w_{im}\rho(x_{m}-x_{i})\left(x_{m}-x_{i}\right)^\top(\vec{u}_m-\vec{u}_i)  \dd p \Bigg)^2 \dd x' \\
        &\qquad+ \frac{m_1}{d^2k_0}k(x_i)\left(\frac{d}{m^\num(x_i)}\sum_{m\not= i}\int_{\Omega_m}w_{im}\rho(x_{m}-x_{i})\left(x_{m}-x_{i}\right)^\top(\vec{u}_m-\vec{u}_i) \dd p \right)^2\dd x \\
        &\stackrel{}{\leq}\left(\tfrac{1}{\alpha_0} + \tfrac{2m_1}{d^2 k_0}\right)2 \sum_{i} V_iW_\kappa^\num(\vec{u}, x_i).
    \end{aligned}
\end{equation*}
The last inequality follows from Lemma \ref{lemm:taunumlemma}.
\end{proof}

To finally obtain (\ref{eqn:numcoerc}), the left-hand side of the numerical lower bound given in (\ref{eqn:numlowerbound1}) is compared to its continuous counterpart. Using the continuous coercitivity bound then results in a uniform numerical bound for small enough $\kappa$.

 \begin{lemma}[Uniform Coercitivity of the Helper Term]\label{lemm:CoercLastStep} There exists a $c'' >0$ such that for small enough $\kappa >0$, one has
\begin{equation*}
    c'' \|\Kk \vec{u}\|^2_{L^2(\Omega)^d} \leq \sum_{i} \int_{\Omega_i} \sum_{j\not=i} \int_{\Omega_j} w_{ij}\rho(x_j-x_i) \|x_j-x_i\|^2\left(\frac{\left(x_j-x_i\right)^\top (\vec{u}_j - \vec{u}_i)}{\|x_j-x_i\|^2}\right)^2 \dd x' \dd x
\end{equation*}
for all $\vec{u}\in \vec{\ca{V}}_\kappa$. \end{lemma}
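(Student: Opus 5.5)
The plan is to compare the discrete helper term with its continuous counterpart evaluated at the piecewise constant function $v := \Kk\vec{u}$, and then use the continuous coercivity to obtain the uniform bound. Define for $v\in L^2(\Omega)^d$
\begin{equation*}
    H(v) := \int_{\Omega}\int_{\Omega} \omega(x'-x)\|x'-x\|^2\left(\frac{(x'-x)^\top(v(x')-v(x))}{\|x'-x\|^2}\right)^2 \dd x'\dd x .
\end{equation*}
Up to the factor $\alpha$, which is bounded below by $\alpha_0$, this is the second summand of $B(v,v)$. The proof of \cite{Mengesha2014}, Proposition 1 and 2 (the nonlocal Korn-type inequality), gives $c_K\|v\|^2_{L^2(\Omega)^d}\leq H(v)$ for all $v\in\ca{V}$ under Assumptions \ref{ass:NumDiscr} A1, A5 and A6. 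Since $\ca{V}_\kappa\subset\ca{V}$, this applies to $v=\Kk\vec{u}$. It therefore suffices to show
\begin{equation*}
    |H(\Kk\vec{u}) - H^\num_\kappa(\vec{u})| \leq \epsilon_\kappa \|\Kk\vec{u}\|^2_{L^2(\Omega)^d},\qquad \epsilon_\kappa\xrightarrow{\kappa\searrow 0}0,
\end{equation*}
where $H^\num_\kappa$ denotes the right-hand side in the statement. Then $c'' := c_K/2$ works once $\epsilon_\kappa\leq c_K/2$.

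To obtain this estimate, note that for $x\in\Omega_i$ and $x'\in\Omega_j$ with $j\neq i$ we have $v(x')-v(x) = \vec{u}_j-\vec{u}_i$, while the diagonal blocks $j=i$ contribute nothing to $H(\Kk\vec{u})$. Hence both quantities are sums over $i\neq j$ of integrals over $\Omega_i\times\Omega_j$ of a scalar kernel applied to the same vector $\vec{u}_j-\vec{u}_i$:
\begin{equation*}
    H(\Kk\vec u)=\sum_i\int_{\Omega_i}\sum_{j\neq i}\int_{\Omega_j} (\vec u_j-\vec u_i)^\top K(x',x)(\vec u_j-\vec u_i),\quad K(x',x)=\chi_{[0,\delta)}(\|x'-x\|)\rho(x'-x)\frac{(x'-x)(x'-x)^\top}{\|x'-x\|^2},
\end{equation*}
and analogously for $H^\num_\kappa$ with the kernel $K_{ij}=w_{ij}\rho(x_j-x_i)\frac{(x_j-x_i)(x_j-x_i)^\top}{\|x_j-x_i\|^2}$. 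Using $\|\vec u_j-\vec u_i\|^2\leq 2\|\vec u_j\|^2+2\|\vec u_i\|^2$ and the symmetry in $i,j$, the difference is bounded by
\begin{equation*}
    4\sum_i \|\vec u_i\|^2\,\int_{\Omega_i}\sum_{j\neq i}\int_{\Omega_j}\|K(x',x)-K_{ij}\|\dd x'\dd x .
\end{equation*}
It thus remains to show that
\begin{equation*}
    \sup_i\sup_{x\in\Omega_i}\sum_{j\ne i}\int_{\Omega_j}\|K(x',x)-K_{ij}\|\dd x'=:\epsilon_\kappa/4\to0,
\end{equation*}
because $\sum_i V_i\|\vec u_i\|^2=\|\Kk\vec u\|^2$.

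This kernel estimate is the main obstacle, since the material parameters no longer appear and only geometric errors remain. I would split $K-K_{ij}$ into three differences: the indicator difference $|w_{ij}-\chi_{[0,\delta)}|\rho$, the difference $|\rho(x'-x)-\rho(x_j-x_i)|$, and the difference of the unit-vector projectors. The indicator difference is handled by Lemma \ref{lemm:BoundOnDiffOfKappa}. Because $\rho$ is singular, I would either restrict to the region where the mismatch set lies in the annulus $\delta\pm\sqrt d\kappa$, where $\rho\leq 2/\delta$, or bound $\rho(x_j-x_i)$ there directly. For the $\rho$ difference, the estimate in Lemma \ref{lemm:auxbndlemma} gives $|\rho(x'-x)-\rho(x_j-x_i)|\leq \sqrt d\kappa\,\rho(x'-x)\rho(x_j-x_i)$; I would split this into neighbours $j$ with $\|x_j-x_i\|\leq R\kappa$, whose total contribution is $O(\kappa^{d-1})$ after integrating $1/\|x'-x\|$ over a $O(\kappa)$-ball, and far cells, where the bound $\lesssim \kappa\int_{\kappa\lesssim\|z\|\leq\delta}\|z\|^{-2}\dd z$ is $O(\kappa\log(1/\kappa))$ for $d=2$ and $O(\kappa)$ for $d=3$. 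For the projector difference, the first part of Lemma \ref{lemm:auxbndlemma} gives $\|\hat z-\hat z_{ij}\|\lesssim\kappa/\|x_j-x_i\|$, which is multiplied by $\rho$ and estimated with the same near/far split together with Lemma \ref{lemm:Bndofstep}. For nearby cells the projector difference is only bounded by $2$, but since $\rho$ is integrable their total mass is $O(\kappa^{d-1})$. Collecting these bounds yields $\epsilon_\kappa\to0$ uniformly in $i$, which concludes the argument.
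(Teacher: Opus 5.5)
Your argument is correct, but it follows a different route from the paper's.

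\textbf{The paper's route.} The paper never compares singular kernels. It first uses $\|x_j-x_i\|\le\diam(\Omega)+\sqrt{d}$ to bound the helper term from below by $(\diam(\Omega)+\sqrt d)^{-4}$ times $\sum_i\int_{\Omega_i}\sum_{j\neq i}\int_{\Omega_j} w_{ij}\|x_j-x_i\|\big((x_j-x_i)^\top(\vec u_j-\vec u_i)\big)^2\dd x'\dd x$. This trades $\rho(z)=1/\|z\|$ for the bounded, Lipschitz kernel $\chi_{[0,\delta)}(\|z\|)\|z\|^3$. Comparing this sum with its continuous analogue then only produces errors of two kinds:
\begin{itemize}
\item errors of order $\sqrt d\kappa$, coming from $\big|\|x'-x\|-\|x_j-x_i\|\big|$ and from the difference of squares;
\item errors of order $D^\delta_\kappa$, coming from Lemma \ref{lemm:BoundOnDiffOfKappa}.
\end{itemize}
The nonlocal Korn inequality of \cite{Mengesha2014} is then applied to this regularized kernel. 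This needs no near/far splitting and no logarithm, but the resulting $c''$ is degraded by $(\diam(\Omega)+\sqrt d)^{-4}$.

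\textbf{Your route.} You keep $\rho$ and apply Korn to the model's own weighting function $\chi_{[0,\delta)}(\|z\|)/\|z\|$, which gives the better constant $c_K/2$. The price is that you must control $\kappa\sum_j V_j\|x_j-x_i\|^{-2}$ via your near/far split. Lemma \ref{lemm:Bndofstep} only bounds $\sum_j V_j\|x_j-x_i\|^{-1}$, so your explicit $O(\kappa^{d-1})$ and $O(\kappa\log(1/\kappa))$ estimates are genuinely needed for the $\rho$-difference and projector-difference terms; they do not follow from a citation alone.

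Your reduction via $\|\vec u_j-\vec u_i\|^2\le 2\|\vec u_j\|^2+2\|\vec u_i\|^2$, together with the symmetries $K(x',x)=K(x,x')$ and $K_{ij}=K_{ji}$ (the latter uses A4(d)), is also cleaner than the paper's H\"older treatment of the cross term.

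\textbf{Minor correction.} On the set where $\chi_{[0,\delta)}(\|x'-x\|)\neq w_{ij}$, A4 only guarantees $\|x'-x\|\in(\delta-\tfrac32\sqrt d\kappa,\ \delta+\tfrac32\sqrt d\kappa)$, not $\delta\pm\sqrt d\kappa$. This still gives $\rho\le 2/\delta$ there for small $\kappa$, so your conclusion stands.
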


\begin{proof}
First, for $\kappa <<1$,
\begin{equation*}
    \begin{aligned} 
        &\frac{1}{(\diam(\Omega)+\sqrt{d})^4}\sum_{i} \int_{\Omega_i} \sum_{j\not=i} \int_{\Omega_j} w_{ij}\|x_j-x_i\|\left(\left(x_j-x_i\right)^\top (\vec{u}_j - \vec{u}_i)\right)^2 \dd x' \dd x \\
        &\leq\sum_{i} \int_{\Omega_i} \sum_{j\not=i} \int_{\Omega_j} w_{ij} \|x_j-x_i\|\left(\frac{\left(x_j-x_i\right)^\top (\vec{u}_j - \vec{u}_i)}{\|x_j-x_i\|^2}\right)^2 \dd x' \dd x.
    \end{aligned}
\end{equation*}
This reduces the proof to showing the desired lower bound holds for the first term.\newline
Using \cite{Mengesha2014}, Proposition 2, in the first inequality, one has for $\vec{u}\in \vec{\ca{V}}_\kappa$,
\begin{equation}
\label{eqn:contbndfr}
    \begin{aligned}
        &c\|\Kk \vec{u}\|^2_{L^2} \\
        &\stackrel{}{\leq} \int_{\Omega} \int_{\Omega}  \chi_{[0,\delta)}(\|x'-x\|)\rho(x'-x)\|x'-x\|^2\left(\left(x'-x\right)^\top ([\Kk \vec{u}](x')- [\Kk \vec{u}](x))\right)^2 \dd x' \dd x \\
        &=  \sum_{i} \int_{\Omega_i} \sum_{j\not= i} \int_{\Omega_j} \chi_{[0,\delta)}(\|x'-x\|)\rho(x'-x)\|x'-x\|^2\left(\left(x'-x\right)^\top (\vec{u}_j- \vec{u}_i)\right)^2 \dd x' \dd x.
   \end{aligned}
\end{equation} 
To obtain a bound for the numerical term that is independent of $\kappa$ for small enough $\kappa<<1$, we compare it to the continuous term above, working towards inequality (\ref{eqn:bndondiffdepu}) below. To this end, we have
\begin{align}
    &\Bigg|\sum_{i}\int_{\Omega_i} \sum_{j\not=i} \int_{\Omega_j}w_{ij}\|x_j-x_i\|\left(\left(x_j-x_i\right)^\top (\vec{u}_j - \vec{u}_i)\right)^2 \dd x' \dd x\nonumber\\
    &\quad-\sum_{i} \int_{\Omega_i} \sum_{j\not= i} \int_{\Omega_j} \chi_{[0,\delta)}(\|x'-x\|)\|x'-x\|\left(\left(x'-x\right)^\top (\vec{u}_j- \vec{u}_i)\right)^2 \dd x' \dd x\Bigg| \nonumber\\
    &\leq \Bigg| \sum_{i}\int_{\Omega_i} \sum_{j\not=i} \int_{\Omega_j} w_{ij}\|x_j-x_i\|\left(\left(x_j-x_i\right)^\top (\vec{u}_j - \vec{u}_i)\right)^2 \nonumber \\
    &\qquad\quad -\chi_{[0,\delta)}(\|x'-x\|)\|x'-x\|\left(\left(x'-x\right)^\top (\vec{u}_j- \vec{u}_i)\right)^2 \dd x' \dd x \Bigg|\nonumber\\
    &\leq\sum_{i}\int_{\Omega_i} \sum_{j\not=i} \int_{\Omega_j}  |\chi_{[0,\delta)}(\|x'-x\|)-w_{ij}|\|x'-x\|\left(\left(x'-x\right)^\top (\vec{u}_j- \vec{u}_i)\right)^2 \dd x' \dd x\nonumber\\
    &\quad+\sum_{i}\int_{\Omega_i} \sum_{j\not=i} \int_{\Omega_j} w_{ij}|\|x'-x\|-\|x_j-x_i\||\left(\left(x'-x\right)^\top (\vec{u}_j- \vec{u}_i)\right)^2  \dd x' \dd x\nonumber \\
    &\quad+\sum_{i}\int_{\Omega_i} \sum_{j\not=i} \int_{\Omega_j}  w_{ij}\|x_j-x_i\|\Bigg|\left(\left(x'-x\right)^\top (\vec{u}_j- \vec{u}_i)\right)^2\nonumber\\
    &\phantom{=======================}-\left(\left(x_j-x_i\right)^\top (\vec{u}_j- \vec{u}_i)\right)^2\Bigg|  \dd x' \dd x\nonumber \\
    &\leq \sum_{i}\int_{\Omega_i} \sum_{j\not=i} \int_{\Omega_j}  |\chi_{[0,\delta)}(\|x'-x\|)-w_{ij}|\|x'-x\|^3\nonumber\\
    &\phantom{=======================}\times(\|[\Kk \vec{u}](x')\|+\|[\Kk \vec{u}](x)\|)^2 \dd x' \dd x\nonumber\\
    &\quad+\sum_{i}\int_{\Omega_i} \sum_{j\not=i} \int_{\Omega_j} \|x'-x-x_j+x_i\|\|x'-x\|^2(\|[\Kk \vec{u}](x')\|+\|[\Kk \vec{u}](x)\|)^2  \dd x' \dd x \nonumber\\
    &\quad+\sum_{i}\int_{\Omega_i} \sum_{j\not=i} \int_{\Omega_j} \|x_j-x_i\|\|x_j-x_i+x'-x\|\left\|x_j-x_i-x'+x\right\|\nonumber\\
    &\phantom{=======================}\times(\|[\Kk \vec{u}](x')\|+\|[\Kk \vec{u}](x)\|)^2  \dd x' \dd x \nonumber\\
    &\leq \diam(\Omega)^3\sum_{i}\int_{\Omega_i} \sum_{j\not=i} \int_{\Omega_j}  |\chi_{[0,\delta)}(\|x'-x\|)-w_{ij}|\nonumber\\
    &\phantom{=======================}\times(\|[\Kk \vec{u}](x')\|+\|[\Kk \vec{u}](x)\|)^2 \dd x' \dd x\nonumber\\
    &\quad+ 3\sqrt{d}(\diam(\Omega)+\sqrt{d})^2\kappa \sum_{i}\int_{\Omega_i} \sum_{j\not=i} \int_{\Omega_j}(\|[\Kk\vec{u}](x')\|+\|[\Kk \vec{u}](x)\|)^2  \dd x' \dd x.\label{eqn:lastlineproofcoerc}
\end{align}
We now use
\begin{equation*}
\begin{aligned}
    &\sum_{i}\int_{\Omega_i} \sum_{j\not=i} \int_{\Omega_j}(\|[\Kk \vec{u}](x')\|+\|[\Kk \vec{u}](x)\|)^2  \dd x' \dd x \\
    &\leq \sum_{i}\int_{\Omega_i} \sum_{j} \int_{\Omega_j}\|[\Kk \vec{u}](x')\|^2 + \|[\Kk \vec{u}](x)\|^2 + 2\|[\Kk \vec{u}](x')\|\|[\Kk \vec{u}](x)\| \dd x' \dd x \\
    &\leq 2 \vol(\Omega)\|\Kk\vec{u}\|^2_{L^2(\Omega)^d} + 2\left(\int_{\Omega} \|[\Kk \vec{u}](x)\| \dd x\right)^2\stackrel{}{\leq} 4\vol(\Omega)\|\Kk \vec{u}\|^2_{L^2(\Omega)^d}.
\end{aligned}
\end{equation*}
Jensen's Inequality was used to obtain the last inequality. 
In order to deal with the first summand of the last expression in (\ref{eqn:lastlineproofcoerc}), for $\kappa < \frac{\delta}{\sqrt{d}}$, we use

\begin{align*}
    &\sum_{i}\sum_{j\not=i} \left(\int_{\Omega_i} \int_{\Omega_j}  |\chi_{[0,\delta)}(\|x'-x\|)-w_{ij}| \dd x' \dd x\right) (\|\vec{u}_j\|+\|\vec{u}_i\|)^2 \\
    &=\sum_{i}\|\vec{u}_i\|^2\sum_{j\not=i} \left(\int_{\Omega_i} \int_{\Omega_j}  |\chi_{[0,\delta)}(\|x'-x\|)-w_{ij}| \dd x' \dd x\right) \\
    &\quad+\sum_{j}\|\vec{u}_j\|^2\sum_{i\not=j} \left(\int_{\Omega_i} \int_{\Omega_j}  |\chi_{[0,\delta)}(\|x'-x\|)-w_{ij}| \dd x' \dd x\right)\\
    &\quad+2\sum_{j}\sum_{i\not=j} \|\vec{u}_j\|\|\vec{u}_i\|\left(\int_{\Omega_i} \int_{\Omega_j}  |\chi_{[0,\delta)}(\|x'-x\|)-w_{ij}| \dd x' \dd x\right)
    \intertext{using Lemma \ref{lemm:BoundOnDiffOfKappa} we then obtain}
    &\stackrel{}{=}  \sum_{i} V_i \|\vec{u}_i\|^2 D^{\delta}_\kappa  +\sum_{j} V_j \|\vec{u}_j\|^2 D^{\delta}_\kappa\\
    &\quad+2\sum_{j}\sum_{i\not=j} \|\vec{u}_j\|\|\vec{u}_i\|\left(\int_{\Omega_i} \int_{\Omega_j}  |\chi_{[0,\delta)}(\|x'-x\|)-w_{ij}| \dd x' \dd x\right)\\
    &\leq 2D^{\delta}_\kappa \|\Kk \vec{u}\|^2_{L^2(\Omega)^d} \\
    &\quad+2\sum_{j}\sum_{i\not=j}\|\vec{u}_j\|\|\vec{u}_i\|\left(\int_{\Omega_i} \int_{\Omega_j}  |\chi_{[0,\delta)}(\|x'-x\|)-w_{ij}| \dd x' \dd x\right)\\
\end{align*}
where finally,
\begin{align*}                              
    &\sum_{j}\sum_{i\not=j}\|\vec{u}_j\|\|\vec{u}_i\|\Bigg(\int_{\Omega_i} \int_{\Omega_j}  |\chi_{[0,\delta)}(\|x'-x\|)-w_{ij}| \dd x' \dd x\Bigg) \\    &=\sum_{j}\|\vec{u}_j\|\Bigg(\int_{\Omega_j} \sum_{i\not=j}\int_{\Omega_i} \|[\Kk \vec{u}](x)\| \big|\chi_{[0,\delta)}(\|x'-x\|)-w_{ij}\big| \dd x \dd x'\Bigg)     \intertext{applying Hölder's Inequality yields with Lemma \ref{lemm:BoundOnDiffOfKappa},}
    &\stackrel{}{\leq} \sum_{j} \|\vec{u}_j\|\Bigg(\int_{\Omega_j}\sum_{i\not=j}\int_{\Omega_i} |\chi_{[0,\delta)}(\|x'-x\|)-w_{ij}| \dd x \dd x'\Bigg)^{1/2}\\
    &\phantom{====\,\,}\times\Bigg(\int_{\Omega_j}\int_{\Omega} \|[\Kk \vec{u}](x)\|^2 \dd x\dd x'\Bigg)^{1/2} \\
    &\leq \sum_{j}\|\vec{u}_j\| (V_jD^{\delta}_\kappa)^{1/2} (V_j \|\Kk \vec{u}\|_{L^2(\Omega)^d}^2)^{1/2} \\
    &\leq \sum_{j} V_j \|\vec{u}_j\| \|\Kk \vec{u}\|_{L^2(\Omega)^d} \sqrt{D^{\delta}_\kappa} \\
    &\leq \sqrt{D^{\delta}_\kappa}\|\Kk \vec{u}\|_{L^2(\Omega)^d} \|\Kk \vec{u}\|_{L^1(\Omega)^d} \leq \sqrt{D^{\delta}_\kappa\vol(\Omega)} \|\Kk \vec{u}\|_{L^2(\Omega)^d}^2.
    \end{align*}
In summary,
\begin{equation}
\begin{aligned}
\label{eqn:bndondiffdepu}
    &\Bigg|\sum_{i}\int_{\Omega_i} \sum_{j\not=i} \int_{\Omega_j}w_{ij}\|x_j-x_i\|\left(\left(x_j-x_i\right)^\top (\vec{u}_j - \vec{u}_i)\right)^2 \dd x' \dd x\\
    &\phantom{==}-\sum_{i} \int_{\Omega_i} \sum_{j\not= i} \int_{\Omega_j} \chi_{[0,\delta)}(\|x'-x\|)\|x'-x\|\left(\left(x'-x\right)^\top (\vec{u}_j- \vec{u}_i)\right)^2 \dd x' \dd x\Bigg| \\
    &\leq C^{\Omega,\delta,\kappa} \|\Kk \vec{u}\|^2_{L^2(\Omega)^d}.
\end{aligned}
\end{equation}
For some $ C^{\Omega,\delta,\kappa} \xrightarrow[]{\kappa \searrow 0} 0$. This implies 
\begin{equation}
\label{eqn:lastlinecoercbound}
\begin{aligned}
    &\sum_{i}\int_{\Omega_i} \sum_{j\not=i} \int_{\Omega_j}w_{ij}\|x_j-x_i\|\left(\left(x_j-x_i\right)^\top (\vec{u}_j - \vec{u}_i)\right)^2 \dd x' \dd x \\
    &\geq \Bigg|\int_{\Omega} \int_{\Omega}  \chi_{[0,\delta)}(\|x'-x\|)\|x'-x\|\left(\left(x'-x\right)^\top ([\Kk \vec{u}](x')- [\Kk \vec{u}](x))\right)^2 \dd x' \dd x \\
    &\quad-\Bigg|\sum_{i}\int_{\Omega_i} \sum_{j\not=i} \int_{\Omega_j}w_{ij}\|x_j-x_i\|\left(\left(x_j-x_i\right)^\top (\vec{u}_j - \vec{u}_i)\right)^2 \dd x' \dd x\\
    &\qquad-\sum_{i} \int_{\Omega_i} \sum_{j\not= i} \int_{\Omega_j} \chi_{[0,\delta)}(\|x'-x\|)\|x'-x\|\left(\left(x'-x\right)^\top (\vec{u}_j- \vec{u}_i)\right)^2 \dd x' \dd x\Bigg|\,\Bigg|.
\end{aligned}
\end{equation}
Choosing $\kappa$ such that ${C^{\Omega,\delta,\kappa'}}^2 < q^2c^2$ for all $0<\kappa'<\kappa$ for some $0<q<1$, allows using (\ref{eqn:contbndfr}) to estimate the last line of (\ref{eqn:lastlinecoercbound}) above as follows,
\begin{equation*}
    \begin{aligned}
    &\geq\int_{\Omega} \int_{\Omega}  \chi_{[0,\delta)}(\|x'-x\|)\|x'-x\|\left(\left(x'-x\right)^\top ([\Kk \vec{u}](x')- [\Kk \vec{u}](x))\right)^2 \dd x' \dd x \\
        &\phantom{=}-\Bigg|\sum_{i}\int_{\Omega_i} \sum_{j\not=i} \int_{\Omega_j}w_{ij}\|x_j-x_i\|\left(\left(x_j-x_i\right)^\top (\vec{u}_j - \vec{u}_i)\right)^2 \dd x' \dd x\\
    &\phantom{==}-\sum_{i} \int_{\Omega_i} \sum_{j\not= i} \int_{\Omega_j} \chi_{[0,\delta)}(\|x'-x\|)\|x'-x\|\left(\left(x'-x\right)^\top (\vec{u}_j- \vec{u}_i)\right)^2 \dd x' \dd x\Bigg| \\
    &\geq (c  - C^{\Omega,d,\delta,\kappa'}) \|\Kk \vec{u}\|^2_{L^2(\Omega)^d}\geq (1-q)c \|\Kk \vec{u}\|^2_{L^2(\Omega)^d}.
    \end{aligned}
\end{equation*}
\end{proof}

\begin{corollary}[Uniform Coercitivity of the Numerical Operators]\label{corol:CoercResult} Let Assumptions \ref{ass:NumDiscr} hold. For small enough $1 >>\kappa >0$ and for all $\vec{u}\in \vec{\ca{V}}_\kappa$, we have
\begin{equation*}
    c^\num \|\Kk \vec{u}\|^2_{L^2} \leq \vec{u}^\top \vec{B}^\num \vec{u},
\end{equation*}
for some $c^\num >0$.\end{corollary}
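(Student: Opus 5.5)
The corollary follows by chaining the three preceding results. No new estimates should be needed; only constants have to be tracked and the thresholds on $\kappa$ combined.

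First, by Lemma \ref{lemm:PotFormLemma}, for every $\vec{u}\in\vec{\ca{V}}_\kappa$ we can write
\[
\vec{u}^\top\vec{B}^\num\vec{u}=2\sum_i V_iW^\num_\kappa(\vec{u},x_i).
\]
Denote by $H_\kappa(\vec{u})$ the helper term appearing on the left of (\ref{eqn:numlowerbound1}), i.e.\ the double sum/integral with weights $w_{ij}\rho(x_j-x_i)\|x_j-x_i\|^2$ and squared normalized projections $\tfrac{(x_j-x_i)^\top(\vec{u}_j-\vec{u}_i)}{\|x_j-x_i\|^2}$.

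Second, Lemma \ref{lemm:IneqDiscrPot} gives a constant $c'>0$ and a threshold $\kappa_1$ such that $c'H_\kappa(\vec{u})\le\sum_iV_iW^\num_\kappa(\vec{u},x_i)$ for all $\kappa<\kappa_1$. From its proof one can take $c'=\big(2(\tfrac1{\alpha_0}+\tfrac{2m_1}{d^2k_0})\big)^{-1}$, which is independent of $\kappa$ because the bounds $\tfrac12m_0\le m^\num\le 2m_1$ and $\alpha_0\le\alpha^\num$ from Lemma \ref{lemm:taunumlemma} hold uniformly for $\kappa$ small.

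Third, Lemma \ref{lemm:CoercLastStep} gives $c''>0$ and a threshold $\kappa_2$ with $c''\|\Kk\vec{u}\|^2_{L^2}\le H_\kappa(\vec{u})$ for $\kappa<\kappa_2$. Taking $\kappa<\min(\kappa_1,\kappa_2,\tfrac{\delta}{\sqrt d})$ and chaining,
\[
\vec{u}^\top\vec{B}^\num\vec{u}=2\sum_iV_iW^\num_\kappa(\vec{u},x_i)\ge 2c'H_\kappa(\vec{u})\ge 2c'c''\|\Kk\vec{u}\|^2_{L^2(\Omega)^d},
\]
so the corollary holds with $c^\num:=2c'c''$. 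This also establishes (\ref{eqn:numcoerc}).

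The only point needing care is that both constants are genuinely uniform in $\kappa$. For $c'$ this is the uniform parameter bound of Lemma \ref{lemm:taunumlemma} just mentioned. For $c''$ it comes from the continuous coercivity constant of \cite{Mengesha2014}, Proposition 2, applied to $\Kk\vec{u}\in\ca{V}$, together with the perturbation bound $C^{\Omega,\delta,\kappa}\to0$ in (\ref{eqn:bndondiffdepu}). One should also check that the $\vec{u}$ here range over $\vec{\ca{V}}_\kappa$, so that $\Kk\vec{u}$ satisfies the Dirichlet condition needed for that continuous bound. All of this is already contained in the cited lemmas, so the proof reduces to this short chain of inequalities.
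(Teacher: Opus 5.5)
Your proposal is correct and matches the paper's proof. The paper also just combines Lemma \ref{lemm:PotFormLemma}, Lemma \ref{lemm:IneqDiscrPot} and Lemma \ref{lemm:CoercLastStep} in the same order. Your explicit constant $c^\num = 2c'c''$ and the intersection of the two $\kappa$-thresholds are exactly the bookkeeping the paper leaves implicit.
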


\begin{proof} This is obtained by combining Lemma \ref{lemm:PotFormLemma}, Lemma \ref{lemm:IneqDiscrPot} and finally Lemma \ref{lemm:CoercLastStep}.
\end{proof}

\begin{remark}\label{rm:ClosingRemarkKernel} The concrete choice for the weighting function $\omega$ as asserted by A1 in Assumptions \ref{ass:NumDiscr}, was used to show uniform bound on the discrete integrals over the weighting function in Lemma \ref{lemm:Bndofstep}, the bounds and convergence results in Lemma \ref{lemm:taunumlemma} and the bounds in \ref{lemm:auxbndlemma} needed for the boundedness in Proposition \ref{prop:UnifBound} and convergence in Proposition \ref{prop:ConvLemma}. It was also used for the coercitivity claims in Lemma \ref{lemm:CoercLastStep}. However, Lemma \ref{lemm:OpSplitSb}, \ref{lemm:IneqDiscrPot} and \ref{lemm:PotFormLemma} can be analogously formulated for other $\omega$. \newline Therefore, it is evident that the convergence can also be shown for other choices of $\omega$, as long as suitable boundedness, convergence and coercitivity results can be shown. Most notably, if $\omega = \chi_{[0,\delta)}(\|x'-x\|) \rho(x'-x)$ with some $\rho \in C^0(\mathbb{R}^d)$ fulfilling Assumptions \ref{ass:GenAssump}, then $\rho$ is both bounded and uniformly continuous on every compact neighborhood of $0$. This significantly simplifies corresponding results in Lemma \ref{lemm:Bndofstep}, \ref{lemm:taunumlemma}, as well as Lemma \ref{lemm:auxbndlemma}, Proposition \ref{prop:ConvLemma} and Lemma \ref{lemm:CoercLastStep}, by making use of 
\begin{equation*}
    \|\rho\|_{L^\infty} <\infty \text{ \ and \ } \|R\|_{L^\infty(\Omega\times \Omega)} \xrightarrow{\kappa \searrow 0} 0  
\end{equation*}
for
\begin{equation*}
    R(x',x) := \sum_{\substack{i\\ j\not=i}} \chi_{\Omega_i}(x)\chi_{\Omega_j}(x') \rho(x_i - x_j) - \rho(x'-x).
\end{equation*}
Most importantly,  Proposition 2 in \cite{Mengesha2014} used in Lemma \ref{lemm:CoercLastStep} Equation (\ref{eqn:contbndfr}) stays valid, as well as Lemma 5 in \cite{Mengesha2014} used in (\ref{eqn:cont_op_split}) to split the continuous operator. \newline
In other words, the singular weighting function assumed in Chapter 3 using $\rho(\zeta) = \frac{1}{\|\zeta\|}\not\in C^0(\mathbb{R}^d)$ represents a  more difficult, however practically relevant special case. Therefore it is not surprising that the proof above can easily be simplified to yield corresponding convergence results for more regular weighting functions defined by a $\rho \in C^0(\mathbb{R}^d)$. In particular, one can obtain similar convergence results for the following choices by adapting the steps in Chapter 3.  
\begin{enumerate}
    \item Constant functions: $\rho\equiv 1$ yielding $\omega(x'-x) = \chi_{[0,\delta)}(\|x'-x\|)$.
    \item Conical functions for some $C>0$ (See \cite{BobarFlorConv1DPD}, \cite{Ha2010}) 
    \begin{equation*}
        \omega(\zeta) = \chi_{[0,\delta)}(\|\zeta\|)\rho(\zeta) := C\max\Big(0,1 - \frac{\|\zeta\|}{\delta}\Big).
    \end{equation*}
    \item Polynomials (See \cite{Seleson2018}, \cite{ImprOnePointQuadrData}, \cite{SELESON20162432})
    \begin{equation}
    \label{eqn:polykernel}
        \omega(\zeta) =\chi_{[0,\delta)}(\|\zeta\|) \rho(\zeta) :=\chi_{[0,\delta)}(\|\zeta\|) \frac{p_n(\|\zeta\|)}{\|\zeta\|^\alpha}  
    \end{equation}
    for some polynomial $p_n: [0,\delta] \rightarrow \mathbb{R}_{\geq 0}$ such that Assumptions \ref{ass:GenAssump} are fulfilled and $\alpha \in \{ 0, 1\}$. The proof for $\alpha=0$ follows from $p_n(\|\cdot\|) \in C^0(\mathbb{R}^d)$. If $\alpha=1$, then
    \begin{equation*}
        \frac{p_n(\|\zeta\|)}{\|\zeta\|^\alpha} = \frac{C}{\|\zeta\|} + p'_n(\|\zeta\|) 
    \end{equation*}
    for another polynomial $p'_n(\|\zeta\|)$. Since the terms besides $m^\num$,$ \alpha^\num$ and $\tau^\num$ in Lemma \ref{lemm:CoercLastStep}, \ref{lemm:auxbndlemma}, Proposition \ref{prop:ConvLemma} and the $m(x)-m^\num(x_i)$ term in Lemma \ref{lemm:taunumlemma} are (sub)linear in $\rho$, this allows combining the proof using $\rho(\|\zeta\|) = \frac{1}{\|\zeta\|}$ with the proof using $\rho(\|\zeta\|) = p'_n(\|\zeta\|)$ to obtain the convergence for the case in (\ref{eqn:polykernel}) using $\alpha=1$ as well. Note that Lemma \ref{lemm:taunumlemma} handles the convergence of the $m^\num$,$ \alpha^\num$ and $\tau^\num$ terms.
\end{enumerate}
\end{remark}

%% file: cnt/num_mot/nummot.tex
\begin{remark}\label{rm:WhyWeights} As a short demonstration of the advantage of including these weights, we compared the convergence of the solutions for a simple 2d bar problem under tensile loading using two sets of weights
\begin{equation}
\label{eqn:defweightsPA}
    w^{\text{FA}}_{ij} :=  \chi_{[0,\delta)}(\|x_j-x_i\|) \text{ and }  w^{\mathrm{PAAC}}_{ij} := \vol(\Omega_j \cap B_{\delta}(x_i))/V_j.
\end{equation}
The FA weights correspond to keeping the characteristic function without using weights in the discretized terms (\ref{eqn:defnumconstparts}). The PAAC terms correspond to using the exact area of the intersection of the horizon with the discrete elements $\Omega_i$. Formulas for the $w_{ij}^{\text{FA}}$ weights and a much more detailed comparison of the convergence using a broader variety of weights can be found in \cite{ImprOnePointQuadrData}. Moreover, it contains schemes which additionally choose suitable quadrature points to further improve convergence. This was ignored in this paper to keep the analysis simple. Accurate choices for partial volume weights for the case of $d=3$ can be found in \cite{Scabbia2023}. 
For the comparison, we calculated the norm $\|u_\kappa - u^*\|_{L^2(\Omega)^d}$ for the solutions $u_\kappa$ to the discrete problems and a reference solution $u^*$ calculated by using a relatively small $\kappa^*$.  The first problem is defined by 
\begin{equation}
\label{eqn:defprob1}
    k\equiv 100, l\equiv 800,
\end{equation}
as well as $\delta = \tfrac{1}{20}$, $\nu_{} \equiv 0$, $\Omega = [0,2]\times [0,1]$, $\Theta = [0,2\delta]\times [0,1]$ and $b\equiv (100, 0)^T\chi_{[2-2\delta, 2]\times [0,1]}$. This models a short bar being pulled on the right and held in place on the left. The $\kappa_n$ where chosen as $\kappa_n = (40 + 20n)^{-1}$ for $n\in\{0,...7\}$ and $u^*$ was obtained using $\kappa^*=360^{-1}$ and the PAAC weights. \newline
An additional comparison for a second problem featuring discontinuous input data is also shown in Figure \ref{fig:comp}. It has the same data except for 
\begin{equation}
\label{eqn:descrincl}
    k(x) = \begin{cases}
        0.01 & x \in B_{0.3}\Big((1, 0.5)^\top\Big) \\
        100 & \text{else}.
    \end{cases},
    l(x) = \begin{cases}
        0.08 & x \in B_{0.3}\Big((1,0.5)^\top\Big) \\
        800 & \text{else}.
    \end{cases}
\end{equation}

\begin{figure}[ht]
    \centering
    \includegraphics[width=\linewidth]{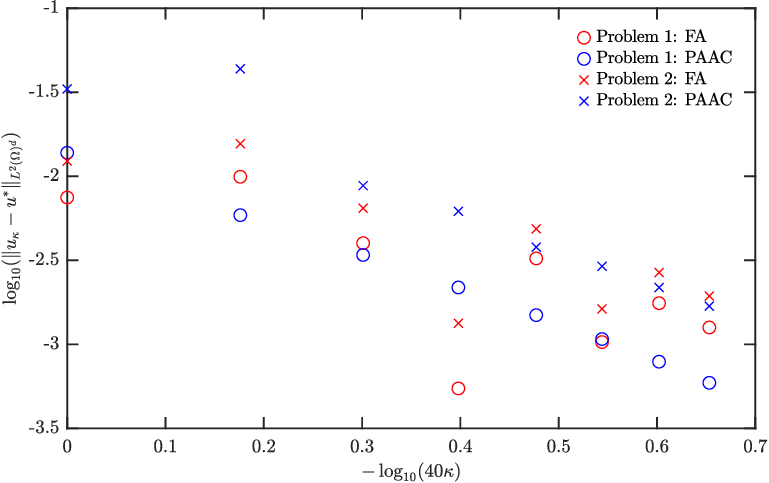}
    \caption{Comparison of the convergence behavior between the two choices of partial area weights as given in (\ref{eqn:defweightsPA}). A sequence of solutions for increasingly finer discretizations of a state-based model was calculated for both choices of the partial area weights (See (\ref{eqn:defweightsPA})) for two distinct problems defined by (\ref{eqn:defprob1}) and (\ref{eqn:descrincl}). The $y$-axis shows the logarithm of the $L^2(\Omega)^d$ norm of the difference between the solution $\b{u}_\kappa$ and a reference solution $u^*$ calculated at $\kappa = 360^{-1}$ using the PAAC weights. The $x$-axis was chosen to be a logarithmic scale for $\kappa$, shifted such that $\kappa_0=40^{-1}$ appears at $0$. The data of the first problem is shown plotted by the circles, while the crosses correspond to the second problem. The blue markers correspond to the solutions using the $w_{ij}^{\text{PAAC}}$ weights, the red markers correspond to the solutions for the $w_{ij}^{\text{FA}}$ weights. }
    \label{fig:comp}
\end{figure}
As visible in Figure \ref{fig:comp}, the models with the weights, namely the $w^{\text{PAAC}}_ij$ weights, demonstrate a more consistent convergence behavior. For a more detailed description of the oscillatory behavior found in the plots for the $\omega^{FA}$ weights, we refer to \cite{ImprOnePointQuadrData}. 
\end{remark}

%% file: note.tex
This research was funded by the Deutsche Forschungsgemeinschaft (DFG, German Research Foundation) - 377472739/GRK 2423/2-2023. The authors are very grateful for this support. 

The authors declare that they have no conflict of interest.

%% file: cnt/appendix.tex
\begin{lemma}\label{lemm:prfcube} Lemma \ref{lemm:zerosetcubeslemma} holds true.\end{lemma}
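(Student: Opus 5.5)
We need to prove Lemma \ref{lemm:zerosetcubeslemma}: for a compact set $A\subset\mathbb{R}^d$ of measure zero and any $\epsilon>0$, there is a finite family of half-open cubes of one common side length $l$ covering $A$ whose union has volume less than $\epsilon$. The plan is to start from the standard measure-zero characterization, pass to a finite cover by compactness, and then make all side lengths equal by refining onto a dyadic grid.

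\emph{Countable open cover.} Since $\vol(A)=0$, outer regularity of Lebesgue measure gives an open set $U\supset A$ with $\vol(U)<\epsilon$. This is the cleanest starting point: it replaces an arbitrary cover of small total volume by a single open set.

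\emph{Dyadic decomposition of $U$.} Every open set in $\mathbb{R}^d$ is a countable disjoint union of half-open dyadic cubes $[k_12^{-n},(k_1+1)2^{-n})\times\dots\times[k_d2^{-n},(k_d+1)2^{-n})$. So write $U=\dot\bigcup_{r\in\mathbb{N}} Q_r$ with dyadic half-open cubes $Q_r$.

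\emph{Finite subcover.} The half-open cubes are not open, so compactness cannot be applied to them directly. There are two ways around this.
\begin{enumerate}
\item Use $\operatorname{dist}(A,\mathbb{R}^d\setminus U)=:\eta>0$, which holds since $A$ is compact and $U$ is open; if $U=\mathbb{R}^d$, shrink it first. Fix $n$ with $\sqrt{d}\,2^{-n}<\eta$ and take all dyadic cubes of side $2^{-n}$ that meet $A$. Each such cube has diameter less than $\eta$, so it lies inside $U$.
\item Only finitely many such cubes exist, because $A$ is bounded, and they cover $A$, because the dyadic cubes of a fixed level partition $\mathbb{R}^d$.
\end{enumerate}
Taking the first route gives uniform side length $l=2^{-n}$ immediately, so the decomposition into the $Q_r$ is not even needed.

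\emph{Volume estimate.} The chosen cubes are pairwise disjoint and contained in $U$. Hence their union has volume at most $\vol(U)<\epsilon$, which finishes the proof.

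\emph{Main obstacle.} The only delicate point is the positivity of $\eta$ and the degenerate case $U=\mathbb{R}^d$. The latter cannot actually occur, since $\vol(U)<\epsilon<\infty$. The positivity of $\eta$ follows from the continuity of the distance function $\operatorname{dist}(\cdot,\mathbb{R}^d\setminus U)$, which is strictly positive on the compact set $A$ and therefore attains a positive minimum. Everything else is routine.
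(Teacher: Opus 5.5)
Your proof is correct, but it takes a different route from the paper. The paper starts from the textbook characterization of null sets by countably many open cubes with rational side lengths $p_n/q_n$ and uses compactness of $A$ to keep finitely many of them. It then forces a common side length by subdividing every cube into subcubes of side $(\mathrm{lcm}_n q_n)^{-1}$, and finally passes from open to half-open cubes, which changes nothing up to null sets. You instead take an open $U\supset A$ with $\vol(U)<\epsilon$ from outer regularity, set $\eta=\operatorname{dist}(A,\mathbb{R}^d\setminus U)>0$, and keep all cells of one fixed grid of mesh $l$ with $\sqrt{d}\,l<\eta$ that meet $A$. These cells automatically lie in $U$ and are pairwise disjoint, so you need neither the common-denominator subdivision nor the open/half-open bookkeeping, and the volume bound is immediate. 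The paper's version stays closer to the elementary cube-cover definition of measure zero, while yours trades it for outer regularity plus a uniform-neighbourhood argument. Your argument also never uses that the grid is dyadic. Applied directly to the $\kappa$-grid $(B_i)_i$, it gives Proposition \ref{prop:ClosedZeroSetApprox} outright: once $\sqrt{d}\,\kappa<\eta$, every $B_i$ with $\overline{B_i}\cap A\neq\emptyset$ lies in $U$, so the paper's $(2\kappa+q)^d-q^d$ estimate could be bypassed. Two cosmetic points. The decomposition of $U$ into dyadic cubes $Q_r$ is, as you note yourself, superfluous and can be deleted; your two-item list also reads as two alternatives although both items belong to the same route. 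The case $A=\emptyset$, where $\eta$ is not the minimum of anything, should be dismissed separately as trivial.
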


\begin{proof} We use \cite{MeasuTheo}, §3, Satz 8, with a slight modification, namely replacing the closed cubes with open ones by suitably extending them then taking their interior. We then obtain a series of open cubes $W_n$ with rational side lengths $p_n/q_n,\,$ $p_n, q_n \in \mathbb{N}$ for every $\epsilon >0$ such that 
\begin{equation*}
    A \subset \bigcup_{n\in \mathbb{N}} W_n, \, \vol\left(\bigcup_{n\in \mathbb{N}} W_n\right) < \epsilon.
\end{equation*}
Due to the compactness of $A$ and the openness of the $W_n$, only a finite subset of cubes is sufficient. Let $F_\epsilon$ be the set of indices of this finite subset of cubes. Then, subdivide each cube into smaller cubes of side length $(\text{lcm}_{n\in F_\epsilon}q_n)^{-1}$. Extending these cubes to half-closed cubes of equal volume then finishes the proof. 
\end{proof}

\begin{lemma}\label{lemm:SplitSBOp} The statement in Lemma \ref{lemm:OpSplitSb} holds. \end{lemma}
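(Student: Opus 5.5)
The plan is to start from the definition of $\ca{L}^\num_\kappa$ through $\vec{u}^\top\vec{B}^\num\vec{v} = \langle \Kk\vec{u}, -\ca{L}^\num_\kappa\Kk\vec{v}\rangle_{L^2(\Omega)^d}$, extended to all of $L^2(\Omega)^d$ by precomposing with $\Pk$. So I have to show that $-\vec{u}^\top \vec{B}^\num \vec{v} = \sum_i V_i \vec{u}_i^\top [\ca{L}^\num_\kappa \Kk\vec{v}]_i$ with the four operators in (\ref{eqn:defnumconst}). Since both sides are bilinear, it suffices to verify the identity blockwise: for fixed $i$ the $i$-th block of $\vec{B}^\num\vec{v}$ must equal $-V_i$ times the sum of the four expressions evaluated at $\Pk\Kk\vec{v}=\vec{v}$. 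This uses $\Pk\circ\Kk=\id$ from Lemma \ref{lemm:PropProjOps}. Symmetry of $\vec{B}^\num$ (from $w_{ij}=w_{ji}$, $\rho$ even, and the definitions in (\ref{eqn:defnumconstparts})) lets me pass freely between row and column forms.

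\textbf{Step 1: row sums vanish.} I will check that $\sum_j \vec{B}^\num_{ij} = 0$ for every $i$, i.e. that constant displacements lie in the kernel. Using (\ref{eqn:bstiffnumconst}), the $\ca{I}^{1,\num}$ contributions cancel directly: $\sum_{m\ne i}\ca{I}^{1,\num}_{im}$ on the diagonal against $-\sum_{j\ne i}\ca{I}^{1,\num}_{ij}$ off it. For the $\ca{I}^{2,\num}$ terms I will collect, for fixed $i$, all triples by the position of $i$ in $\ca{I}^{2,\num}_{abc}$ (first, second or third index) and check that each group sums to zero. Here I use $\ca{I}^{2,\num}_{abc}=(\ca{I}^{2,\num}_{acb})^\top$ up to the $k,l$ transposition, and I keep track of the constraints $m\notin\{i,j\}$ etc. With zero row sums, $(\vec{B}^\num\vec{v})_i = \sum_{j\ne i}\vec{B}^\num_{ij}(\vec{v}_j-\vec{v}_i)$, which already produces the difference form $([\Pk v]_j-[\Pk v]_i)$ appearing in all four operators.

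\textbf{Step 2: identify the pieces.} For $j\ne i$ I split $-\vec{B}^\num_{ij}$ into its four constituent sums and match them as follows.
\begin{itemize}
\item The $\ca{I}^{1,\num}_{ij}$ term, after writing $V_iV_j w_{ij} = V_i V_j^\supscr{i}$ and dividing by $V_i$, gives $\ca{L}^\num_1$.
\item $\sum_{m\ne i}\ca{I}^{2,\num}_{imj}$ gives $\ca{L}^{x,\num}_2$. I need to include the diagonal-like summand with $m=j$; this is admissible since the formula in (\ref{eqn:defnumconstparts}) makes sense there.
\item $\sum_{m\ne j}\ca{I}^{2,\num}_{jim}$ gives $\ca{L}^{x',\num}_2$. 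The sign flip comes from rewriting $(x_m-x_j)^\top$ as $-(x_j-x_m)^\top$, and the transposition is absorbed because the block multiplies $\vec{v}$ from the appropriate side.
\item $\sum_{m\notin\{i,j\}}\ca{I}^{2,\num}_{mij}$ gives $\ca{L}^{p,\num}_2$. Here the double sign flip yields $(x_i-x_m)(x_m-x_j)^\top$ with an overall minus that cancels the minus in $-\vec{B}^\num$.
\end{itemize}
Finally I check that the operator so obtained is the one appearing in the definition of $\ca{L}^\num_\kappa$, i.e. that it represents $\vec{B}^\num$ on $\ca{W}_\kappa$ and depends on $u$ only through $\Pk u$.

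\textbf{Main obstacle.} I expect the bookkeeping in Step 1 to be the main difficulty, specifically the $\ca{I}^{2,\num}$ part of the diagonal, $\sum_{n\ne i}\sum_{m\ne i}\ca{I}^{2,\num}_{inm}+\sum_{m\ne i}\ca{I}^{2,\num}_{mii}$, which must cancel exactly against the three off-diagonal groups summed over $j\ne i$. The index exclusions ($m\ne i$ versus $m\notin\{i,j\}$) and the transposes have to line up precisely. To handle this I would first rewrite every sum as a sum over all ordered triples with indicator constraints, and only then compare.
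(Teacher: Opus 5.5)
Your proposal is correct and follows essentially the same route as the paper: both are direct bookkeeping of row $i$ of $\vec{B}^\num$ via (\ref{eqn:bstiffnumconst}). Your ``zero row sum'' step is exactly the paper's closing observation that the leftover coefficient of $\vec{u}_i$ vanishes, obtained by pairing $\ca{I}^{2,\num}_{jii}$ with the diagonal term $\ca{I}^{2,\num}_{mii}$ and swapping $j\leftrightarrow m$ in the double sums over $\ca{I}^{2,\num}_{mij}$ and $\ca{I}^{2,\num}_{jim}$. That cancellation is pure relabeling, so you need neither the symmetry of $\vec{B}^\num$ nor the transpose identity $\ca{I}^{2,\num}_{abc}=(\ca{I}^{2,\num}_{acb})^\top$.
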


\begin{proof} Let $x\in \Omega_i$ and $\vec{u} \in {\mathbb{R}^d}^{|I^\kappa|}$, then
\begin{equation*}
    \begin{aligned}
        -[\ca{L}^\num_\kappa \Kk \vec{u}](x)&=\frac{1}{V_i} \sum_{j\not= i} B_{ij}^\num \vec{u}_j + \frac{1}{V_i} B_{ii}^\num \vec{u}_i\\
        &= \frac{1}{V_i}\sum_{j\not=i}\left(\sum_{m\not\in\{i,j\}}\ca{I}^{2,\num}_{mij} - \sum_{m\not=i} \ca{I}^{2,\num}_{imj}-\sum_{m\not=j}\ca{I}^{2,\num}_{jim}- \ca{I}^{1,\num}_{ij}\right)\vec{u}_j \\
        &\quad+  \frac{1}{V_i}\left(\sum_{n\not=i}\sum_{m\not=i}  \ca{I}^{2,\num}_{inm} + \sum_{m\not=i}\ca{I}^{2,\num}_{mii} + \sum_{m\not=i} \ca{I}^{1,\num}_{im}\right)\vec{u}_i.
    \end{aligned}
\end{equation*}
Separating out the terms involving $\ca{I}^{1,\num}$, one has
\begin{equation*}
    \begin{aligned}
        &\frac{1}{V_i}\sum_{j\not=i} \ca{I}^{1,\num}_{ij} (\vec{u}_i-\vec{u}_j) \\
        &=\frac{1}{V_i}\sum_{j\not=i}  V_iV_j^{\supscr{i}}\rho(x_j-x_i)(\alpha^\num(x_i)+\alpha^\num(x_j))\frac{\left(x_j-x_i\right)\left(x_j-x_i\right)^\top}{\|x_j-x_i\|\|x_j-x_i\|}(\vec{u}_i-\vec{u}_j) \dd x' \dd x,
    \end{aligned}
\end{equation*}
which is the $-\ca{L}^\num_1$ term. For the rest of the terms,
\begin{align*}
    &\frac{1}{V_i}\sum_{j\not=i}\left(\sum_{m\not\in\{i,j\}}\ca{I}^{2,\num}_{mij} - \sum_{m\not=i} \ca{I}^{2,\num}_{imj}-\sum_{m\not=j}\ca{I}^{2,\num}_{jim}\right)\vec{u}_j \\
    &\quad+\frac{1}{V_i}\left(\sum_{n\not=i}\sum_{m\not=i}  \ca{I}^{2,\num}_{inm} + \sum_{m\not=i}\ca{I}^{2,\num}_{mii}\right)\vec{u}_i\\
    &=\frac{1}{V_i}\sum_{j\not=i}\Bigg(\sum_{m\not\in\{i,j\}}\callInteg{m}{i}{j} \\
    &\phantom{======}-\sum_{m\not=i} \callInteg{i}{m}{j} \\
    &\phantom{======}-\sum_{m\not=j}\callInteg{j}{i}{m}\Bigg)\vec{u}_j \\
    &\quad+  \frac{1}{V_i}\Bigg(\sum_{n\not=i}\sum_{m\not=i}  \callInteg{i}{n}{m} \\
    &\phantom{======\,}+ \sum_{m\not=i}\callInteg{m}{i}{i}\Bigg)\vec{u}_i \\
    &=- \ca{L}^{\num,x}_2 (\vec{u}_i-\vec{u}_j) \\
    &\phantom{=}+\frac{1}{V_i}\sum_{j\not=i}\Bigg(\sum_{m\not\in\{i,j\}}\callInteg{m}{i}{j} \\
    &\phantom{======\,}-\sum_{m\not=\{j,i\}}\callInteg{j}{i}{m}\Bigg)\vec{u}_j \\
    &\quad+ \frac{1}{V_i}\Bigg(\sum_{j\not=i}\callInteg{j}{i}{i}\Bigg)(\vec{u}_i-\vec{u}_j) \\
    &= -\ca{L}^{\num, x}_2 (\vec{u}_i-\vec{u}_j)-\ca{L}^{\num, x'}_2 (\vec{u}_i-\vec{u}_j)-\ca{L}^{\num, p}_2 (\vec{u}_i-\vec{u}_j) \\
    &\quad+\frac{1}{V_i}\Bigg(\sum_{j\not=i}\sum_{m\not\in\{i,j\}}\callInteg{m}{i}{j} \\
    &\phantom{======\,}-\sum_{j\not=i}\sum_{m\not=\{j,i\}}\callInteg{j}{i}{m}\Bigg)\vec{u}_i,
\end{align*}
where, by swapping indices in the sums on the last two lines,
\begin{align*}
    &\frac{1}{V_i}\Bigg(\sum_{j\not=i}\sum_{m\not\in\{i,j\}}\callInteg{m}{i}{j} \\
    &\quad-\sum_{j\not=i}\sum_{m\not=\{j,i\}}\callInteg{j}{i}{m}\Bigg)\\
    &=\frac{1}{V_i}\Bigg(\sum_{j\not=i}\sum_{m\not\in\{i,j\}}\callInteg{m}{i}{j} \\
    &\quad-\sum_{j\not=i}\sum_{m\not=\{j,i\}}\callInteg{m}{i}{j}\Bigg)=0.
\end{align*}\end{proof}

\begin{lemma}\label{lemm:potform} The claims in Lemma \ref{lemm:PotFormLemma} hold true.\end{lemma}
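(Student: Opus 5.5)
The plan is to expand both sides explicitly and match coefficients, mirroring the continuous identity $B(u,u)=2\int_\Omega W(u,x)\dd x$ from \cite{Mengesha2014}. To keep the bookkeeping manageable, I would work with the quadratic form through the operator splitting of Lemma \ref{lemm:OpSplitSb} (proven in Lemma \ref{lemm:SplitSBOp}). That lemma gives $\vec{u}^\top\vec{B}^\num\vec{u} = \sum_i V_i \vec{u}_i^\top [-\ca{L}^\num_\kappa \Kk\vec{u}](x_i)$, split into the contributions of $\ca{L}^\num_1$, $\ca{L}^{x,\num}_2$, $\ca{L}^{x',\num}_2$ and $\ca{L}^{p,\num}_2$. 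Throughout I abbreviate
\begin{equation*}
\theta_i := \sum_{m\not=i} V_m^\supscr{i}\rho(x_m-x_i)(x_m-x_i)^\top(\vec{u}_m-\vec{u}_i),\qquad e_{ij}:=(x_j-x_i)^\top(\vec{u}_j-\vec{u}_i),
\end{equation*}
so that the integrals $\int_{\Omega_j}\dd x'$ in (\ref{eqn:potformWeqn}) simply produce the factors $V_j$. In this notation, $2V_iW^\num_\kappa(\vec{u},x_i)$ equals
\begin{equation*}
V_i\,\frac{k(x_i)d^2}{m^\num(x_i)^2}\theta_i^2 + V_i\alpha^\num(x_i)\sum_{j\not=i}V_j^\supscr{i}\rho(x_j-x_i)\|x_j-x_i\|^2\Big(\frac{e_{ij}}{\|x_j-x_i\|^2}-\frac{\theta_i}{m^\num(x_i)}\Big)^2 .
\end{equation*}

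The first step handles the bond term. The $\ca{L}^\num_1$ part contributes $-\sum_i\sum_{j\not=i}V_iV_j^\supscr{i}\rho_{ij}(\alpha^\num_i+\alpha^\num_j)\,\vec{u}_i^\top\tfrac{(x_j-x_i)(x_j-x_i)^\top}{\|x_j-x_i\|^2}(\vec{u}_j-\vec{u}_i)$. I would use the symmetry $w_{ij}=w_{ji}$ (A4(d)) and $\rho_{ij}=\rho_{ji}$ to symmetrize in $(i,j)$. This turns the sum into
\begin{equation*}
\tfrac12\sum_i\sum_{j\not=i}V_iV_j^\supscr{i}\rho_{ij}(\alpha^\num_i+\alpha^\num_j)\,\frac{e_{ij}^2}{\|x_j-x_i\|^2},
\end{equation*}
and then, by symmetry again, into $\sum_i V_i\alpha^\num(x_i)\sum_{j\not=i}V_j^\supscr{i}\rho_{ij}\,e_{ij}^2/\|x_j-x_i\|^2$.

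The second step handles the dilatation terms. The three $\ca{I}^{2,\num}$ contributions together should equal $\sum_i V_i\tau^\num(x_i)\theta_i^2$. This is because $\ca{I}^{2,\num}_{mij}$ carries the factor $V_m\tau^\num(x_m)$ times the $m$-centred bond vectors, so regrouping each term by its centre index reproduces $\theta_m^2$, exactly as in (\ref{eqn:Bijklequation}) before discretization. Concretely, I would verify directly from (\ref{eqn:bstiffnumconst}) that $\vec{u}^\top\vec{B}^\num\vec{u}$ restricted to the $\ca{I}^{2,\num}$ part is $\sum_m V_m\tau^\num(x_m)\big(\sum_{j\not=m}V_j^\supscr{m}\rho_{mj}(x_j-x_m)^\top(\vec{u}_j-\vec{u}_m)\big)^2$. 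The diagonal/off-diagonal case split in (\ref{eqn:bstiffnumconst}) is exactly the expansion of this square, with terms where $j$ or $m$ equals the centre removed (they vanish anyway since $\vec{u}_i-\vec{u}_i=0$).

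The third step combines the two. The bond and dilatation contributions together give $\vec{u}^\top\vec{B}^\num\vec{u}=\sum_iV_i\big[\alpha^\num_i\sum_{j\not=i}V_j^\supscr{i}\rho_{ij}e_{ij}^2/\|x_j-x_i\|^2+\tau^\num_i\theta_i^2\big]$. I then expand the square in the $\alpha^\num$ term of $2V_iW^\num_\kappa$. The cross term is
\begin{equation*}
-2\frac{\theta_i}{m^\num_i}\sum_j V_j^\supscr{i}\rho_{ij}e_{ij}=-\frac{2\theta_i^2}{m^\num_i},
\end{equation*}
and the last term is $\frac{\theta_i^2}{m^{\num2}_i}\sum_jV_j^\supscr{i}\rho_{ij}\|x_j-x_i\|^2=\frac{\theta_i^2}{m^\num_i}$ by the definition of $m^\num$; this step needs the $j=i$ term to vanish (it does, since $\rho_{ii}\|0\|^2$ is interpreted as $0$). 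The $\alpha^\num$ term therefore yields $\alpha^\num_i\sum_j V_j^\supscr{i}\rho_{ij}e_{ij}^2/\|x_j-x_i\|^2-\tfrac{\alpha^\num_i}{m^\num_i}\theta_i^2$. Adding $\tfrac{k_id^2}{m^{\num2}_i}\theta_i^2$ and using $\alpha^\num=l/m^\num$ gives the coefficient $\tfrac{k_id^2-l_i}{m^{\num2}_i}=\tau^\num_i$, which matches.

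The main obstacle is the second step: carefully tracking the index exclusions ($m\not=i$, $m\not\in\{i,j\}$) and the signs in (\ref{eqn:bstiffnumconst}) so that they recombine into perfect squares. I would first try writing $\vec{u}^\top\vec{B}^\num\vec{u}=\sum_{i,j}\vec{u}_i^\top\vec{B}^\num_{ij}\vec{u}_j$. Then, for fixed centre $m$, I would collect all terms carrying $\tau^\num(x_m)$ and check that they match $\big(\sum_{a\not=m}c_{ma}^\top\vec{u}_a-\sum_{a\not=m}c_{ma}^\top\vec{u}_m\big)^2$ term by term, where $c_{ma}:=V_a^\supscr{m}\rho_{ma}(x_a-x_m)$.
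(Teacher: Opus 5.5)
Your proposal is correct and follows essentially the same route as the paper's proof. You symmetrize the bond ($\ca{I}^{1,\num}$) term using $w_{ij}=w_{ji}$, regroup the three $\tau^\num$ contributions by their centre index into $\sum_i V_i\tau^\num(x_i)\theta_i^2$, and complete the square using the definition of $m^\num$ and $\alpha^\num=l/m^\num$ to recover the coefficient $\tau^\num$. The paper does the same regrouping, only via index renaming in the operator-split form of Lemma \ref{lemm:SplitSBOp}, whereas you match your expanded square directly against the case split in (\ref{eqn:bstiffnumconst}), which does work out term by term.
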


\begin{proof}
We begin by using the explicit forms shown in Lemma \ref{lemm:SplitSBOp},
\begin{align*}
    &\phantom{=\,} \vec{u}^\top \vec{B}^\num \vec{u} = \langle \Kk \vec{u}, -\ca{L}^\num \Kk \vec{u}\rangle_{L^2(\Omega)^d} \\
    &=\sum_{i} \int_{\Omega_i}\vec{u}_i^\top\sum_{j\not=i} \int_{\Omega_j}w_{ij} \rho(x_j-x_i)(\alpha^\num(x_i)+\alpha^\num(x_j))\tfrac{\left(x_j-x_i\right)\left(x_j-x_i\right)^\top}{\|x_j-x_i\|\|x_j-x_i\|}(\vec{u}_i-\vec{u}_j) \dd x' \dd x \\
    &\phantom{=}+\sum_{i}\int_{\Omega_i} \vec{u}_i^\top\sum_{j\not=i}\int_{\Omega_j}\sum_{m\not\in\{i,j\}}\int_{\Omega_m}w_{mj}w_{mi}\tau^\num(x_{m})\\
    &\phantom{===========}\times \frclauseSB{i}{m}{j} (\vec{u}_j-\vec{u}_i) \dd p  \dd x' \dd x\\
    &\phantom{=}-\sum_{i} \int_{\Omega_i} \vec{u}_i^\top\sum_{j\not=i}\int_{\Omega_j}\sum_{m\not=i} \int_{\Omega_m}w_{ij}w_{im}\tau^\num(x_{i})\\
    &\phantom{===========}\times  \frclauseSB{m}{i}{j} (\vec{u}_j-\vec{u}_i) \dd p  \dd x' \dd x \\
    &\phantom{=}-\sum_{i} \int_{\Omega_i}\vec{u}_i^\top\sum_{j\not=i}\int_{\Omega_j}\sum_{m\not=j}\int_{\Omega_m}w_{jm}w_{ji}\tau^\num(x_{j})\\
    &\phantom{===========}\times \frclauseSB{i}{j}{m}(\vec{u}_j-\vec{u}_i) \dd p  \dd x' \dd x.
\end{align*}
The first summand then easily transforms into the following by splitting the sum at the $\alpha$ terms and renaming the constants,
\begin{align*}
    &\sum_{i} \int_{\Omega_i}\vec{u}_i^\top\sum_{j\not=i}\int_{\Omega_j}w_{ij}\rho(x_j-x_i)(\alpha^\num(x_i)+\alpha^\num(x_j))\frac{\left(x_j-x_i\right)\left(x_j-x_i\right)^\top}{\|x_j-x_i\|\|x_j-x_i\|}(\vec{u}_i-\vec{u}_j) \dd x' \dd x \\
    &=  \sum_{i} \int_{\Omega_i}\vec{u}_i^\top\sum_{j\not=i} \int_{\Omega_j}w_{ij}\rho(x_j-x_i)\alpha^\num(x_i)\frac{\left(x_j-x_i\right)\left(x_j-x_i\right)^\top}{\|x_j-x_i\|\|x_j-x_i\|}(\vec{u}_i-\vec{u}_j) \dd x' \dd x  \\
    &\quad+\sum_{i} \int_{\Omega_i}\vec{u}_i^\top\sum_{j\not=i} \int_{\Omega_j}w_{ij} \rho(x_j-x_i)\alpha^\num(x_j)\frac{\left(x_j-x_i\right)\left(x_j-x_i\right)^\top}{\|x_j-x_i\|\|x_j-x_i\|}(\vec{u}_i-\vec{u}_j) \dd x' \dd x\\
    &= \sum_{i} \int_{\Omega_i}(\vec{u}_i-\vec{u}_j)^\top\sum_{j\not=i} \int_{\Omega_j}w_{ij}\rho(x_j-x_i)\alpha^\num(x_i)\frac{\left(x_j-x_i\right)\left(x_j-x_i\right)^\top}{\|x_j-x_i\|\|x_j-x_i\|}(\vec{u}_i-\vec{u}_j) \dd x' \dd x \\
    &= \sum_{i} \int_{\Omega_i}\alpha^\num(x_i)\sum_{j\not=i} \int_{\Omega_j}w_{ij}\rho(x_j-x_i)\left(\frac{\left(x_j-x_i\right)^\top(\vec{u}_i-\vec{u}_j)}{\|x_j-x_i\|}\right)^2 \dd x' \dd x .
\end{align*}
The rest of the terms, after renaming and shifting the indices, are equal to
\begin{align*}
        &\sum_{j}\sum_{m\not=j}\sum_{i\not\in\{m,j\}}\int_{\Omega_j}\int_{\Omega_m}\int_{\Omega_i} \vec{u}_j^\top w_{im}w_{ij}\tau^\num(x_{i})\\
        &\phantom{===========}\times \frclauseSB{j}{i}{m} (\vec{u}_m-\vec{u}_j) \dd p  \dd x' \dd x\\
        &\quad-\sum_{i}\sum_{j\not=i}\sum_{m\not=i}\int_{\Omega_i}\int_{\Omega_j}\int_{\Omega_m} \vec{u}_i^\top w_{im}w_{ij}\tau^\num(x_{i})\\
        &\phantom{===========}\times \frclauseSB{m}{i}{j}(\vec{u}_j-\vec{u}_i) \dd p  \dd x' \dd x \\
        &\quad+\sum_{j}\sum_{i\not=j}\sum_{m\not=i}\int_{\Omega_j}\int_{\Omega_i}\int_{\Omega_m} \vec{u}_j^\top w_{im}w_{ij}\tau^\num(x_{i})\\
        &\phantom{===========}\times\frclauseSB{j}{i}{m}(\vec{u}_j-\vec{u}_i) \dd p  \dd x' \dd x \\
        \intertext{adding summand 1 and 3 together after rearranging the sums yields}
        &{=}\sum_{j}\sum_{m\not=j}\sum_{i\not\in\{m,j\}}\int_{\Omega_j}\int_{\Omega_m}\int_{\Omega_i} \vec{u}_j^\top w_{im}w_{ij}\tau^\num(x_{i})\\
        &\phantom{===========}\times \frclauseSB{j}{i}{m} (\vec{u}_m-\vec{u}_i) \dd p  \dd x' \dd x\\
        &\quad-\sum_{i}\sum_{j\not=i}\sum_{m\not=i}\int_{\Omega_i}\int_{\Omega_j}\int_{\Omega_m} \vec{u}_i^\top w_{im}w_{ij}\tau^\num(x_{i})\\
        &\phantom{===========}\times \frclauseSB{m}{i}{j} (\vec{u}_j-\vec{u}_i) \dd p  \dd x' \dd x \\
        &\quad+\sum_{j}\sum_{i\not=j}\int_{\Omega_j}\int_{\Omega_i}\int_{\Omega_j} \vec{u}_j^\top w_{ij}^2\tau^\num(x_{i})\\
        &\phantom{===========}\times\frclauseSB{j}{i}{j}(\vec{u}_j-\vec{u}_i) \dd p  \dd x' \dd x 
        \intertext{adding the first two summands after rearranging the sums gives }
        &{=}\sum_{j}\sum_{m\not=j}\sum_{i\not\in\{j,m\}}\int_{\Omega_j}\int_{\Omega_m}\int_{\Omega_i}w_{ij}w_{im}\tau^\num(x_{i})\rho(x_j-x_i)\left(x_{j}-x_{i}\right)^\top(\vec{u}_j-\vec{u}_i)\\
        &\phantom{==================}\times\rho(x_m-x_i)\left(x_{m}-x_{i}\right)^\top(\vec{u}_m-\vec{u}_i) \dd p  \dd x' \dd x \\
        &-\sum_{i}\sum_{j\not=i}\int_{\Omega_i}\int_{\Omega_j}\int_{\Omega_j} w_{ij}^2\vec{u}_i^\top\tau^\num(x_{i})\\
        &\phantom{===========}\times  \frclauseSB{j}{i}{j} (\vec{u}_j-\vec{u}_i) \dd p  \dd x' \dd x \\
        &+\sum_{j}\sum_{i\not=j}\int_{\Omega_j}\int_{\Omega_i}\int_{\Omega_j} w_{ij}^2\vec{u}_j^\top\tau^\num(x_{i})\\
        &\phantom{===========}\times  \frclauseSB{j}{i}{j}  (\vec{u}_j-\vec{u}_i) \dd p  \dd x' \dd x
        \intertext{adding all summands together, first the last two, then the rest, leads to}
        &\stackrel{}{=}\sum_{i}\int_{\Omega_i}\tau^\num(x_{i})\sum_{j\not= i}\int_{\Omega_j}w_{ij}\rho(x_{j}-x_{i})\left(x_{j}-x_{i}\right)^\top(\vec{u}_j-\vec{u}_i)  \dd x'\\
        &\phantom{===========}\times \sum_{m\not=i}\int_{\Omega_m} w_{im}\rho(x_m-x_i)\left(x_{m}-x_{i}\right)^\top(\vec{u}_m-\vec{u}_i) \dd p \dd x\\
        &\stackrel{}{=}\sum_{i}\int_{\Omega_i}\tau^\num(x_{i})\left(\sum_{j\not= i}\int_{\Omega_j}w_{ij}\rho(x_{j}-x_{i})\left(x_{j}-x_{i}\right)^\top(\vec{u}_j-\vec{u}_i)  \dd x'\right)^2 \dd x.
\end{align*}
Together with the previous terms, we obtain the following form of $\vec{u}^\top \vec{B}^\num \vec{u}$,
\begin{align*}
        &\sum_{i}\int_{\Omega_i}\left(k(x_i) -\frac{l(x_i)}{d^2}\right)\left(\frac{d}{m^\num(x_i)}\sum_{j\not= i}\int_{\Omega_j}w_{ij}\rho(x_{j}-x_{i})\left(x_{j}-x_{i}\right)^\top(\vec{u}_j-\vec{u}_i)  \dd x'\right)^2 \\
        &\quad+ \alpha^\num(x_i)\sum_{j\not=i} \int_{\Omega_j}w_{ij}\rho(x_{j}-x_{i})\left(\frac{\left(x_j-x_i\right)^\top(\vec{u}_j-\vec{u}_i)}{\|x_j-x_i\|}\right)^2 \dd x' \dd x\\
        &=\sum_{i}\int_{\Omega_i}k(x_i)\left(\frac{d}{m^\num(x_i)}\sum_{j\not= i}\int_{\Omega_j}w_{ij}\rho(x_{j}-x_{i})\left(x_{j}-x_{i}\right)^\top(\vec{u}_j-\vec{u}_i)  \dd x'\right)^2 \\
        &\quad -2\frac{\alpha^\num(x_i)m^\num(x_i)}{d^2}\left(\frac{d}{m^\num(x_i)}\sum_{j\not= i}\int_{\Omega_j}w_{ij}\rho(x_{j}-x_{i})\left(x_{j}-x_{i}\right)^\top(\vec{u}_j-\vec{u}_i)  \dd x'\right)^2 \\
        &\quad+\alpha^\num(x_i)m^\num(x_i)\left(\frac{1}{m^\num(x_i)}\sum_{j\not= i}\int_{\Omega_j}w_{ij}\rho(x_{j}-x_{i})\left(x_{j}-x_{i}\right)^\top(\vec{u}_j-\vec{u}_i)  \dd x'\right)^2 \\
        &\quad+ \alpha^\num(x_i)\sum_{j\not=i} \int_{\Omega_j}w_{ij}\rho(x_j-x_i)\left(\frac{\left(x_j-x_i\right)^\top(\vec{u}_j-\vec{u}_i)}{\|x_j-x_i\|}\right)^2 \dd x' \dd x  \\         
        &=\sum_{i}\int_{\Omega_i}k(x_i)\left(\frac{d}{m^\num(x_i)}\sum_{j\not= i}\int_{\Omega_j}w_{ij}\rho(x_{j}-x_{i})\left(x_{j}-x_{i}\right)^\top(\vec{u}_j-\vec{u}_i)  \dd x'\right)^2 \\
        &\quad -2\alpha^\num(x_i)\left(\sum_{j\not= i}\int_{\Omega_j}w_{ij}\rho(x_{j}-x_{i})\left(x_{j}-x_{i}\right)^\top(\vec{u}_j-\vec{u}_i)  \dd x'\right) \\
        &\phantom{=========}\times\left(\frac{1}{m^\num(x_i)}\sum_{j\not= i}\int_{\Omega_j}w_{ij}\rho(x_{j}-x_{i})\left(x_{j}-x_{i}\right)^\top(\vec{u}_j-\vec{u}_i)  \dd x'\right)\\       
        &\quad+\alpha^\num(x_i)\sum_{j\not= i}\int_{\Omega_j}w_{ij}\rho(x_j-x_i)\|x_j-x_i\|^2\\
        &\phantom{=========}\left(\frac{1}{m^\num(x_i)}\sum_{m\not= i}\int_{\Omega_j}w_{im}\rho(x_{m}-x_{i})\left(x_{m}-x_{i}\right)^\top(\vec{u}_m-\vec{u}_i)  \dd p\right)^2 \dd x' \\
        &\quad+ \alpha^\num(x_i)\sum_{j\not=i} \int_{\Omega_j}w_{ij}\rho(x_j-x_i)\|x_j-x_i\|^2\left(\frac{\left(x_j-x_i\right)^\top(\vec{u}_j-\vec{u}_i)}{\|x_j-x_i\|^2}\right)^2 \dd x' \dd x  \\
        &=\sum_{i}\int_{\Omega_i}k(x_i)\left(\frac{d}{m^\num(x_i)}\sum_{j\not= i}\int_{\Omega_j}w_{ij}\rho(x_{j}-x_{i})\left(x_{j}-x_{i}\right)^\top(\vec{u}_j-\vec{u}_i)  \dd x'\right)^2 \\
        &\quad+ \alpha^\num(x_i)\sum_{j\not=i} \int_{\Omega_j}w_{ij}\rho(x_j-x_i)\|x_j-x_i\|^2\Bigg(\frac{\left(x_j-x_i\right)^\top(\vec{u}_j-\vec{u}_i)}{\|x_j-x_i\|^2} \\
        &\phantom{=========}-\frac{1}{m^\num(x_i)}\sum_{m\not= i}\int_{\Omega_m}w_{im}\rho(x_{m}-x_{i})\left(x_{m}-x_{i}\right)^\top(\vec{u}_m-\vec{u}_i)  \dd p\Bigg)^2 \dd x'
\end{align*}

\end{proof}